\theoremstyle{plain}
\newtheorem{thm}{Theorem}[section]
\newtheorem{lem}{Lemma}[section]
\newtheorem{prop}{Proposition}[section]
\newtheorem{ass}{Assumption}[section]
\theoremstyle{remark}
\theoremstyle{definition}
\newtheorem{defn}{Definition}[section]
\newtheorem{rem}{Remark}[section]
\newcommand{\Complex}{\mathbb C}
\newcommand{\Real}{\mathbb R}
\newcommand{\N}{\mathbb N}
\newcommand{\ddbar}{\overline\partial}
\newcommand{\pr}{\partial}
\newcommand{\ol}{\overline}
\newcommand{\Td}{\widetilde}
\newcommand{\norm}[1]{\left\Vert#1\right\Vert}
\newcommand{\abs}[1]{\left\vert#1\right\vert}
\newcommand{\set}[1]{\left\{#1\right\}}
\newcommand{\To}{\rightarrow}
\title{Quantization and reduction for torsion free CR manifolds}
\author{
	Andrea Galasso\footnote{\noindent{\bf Address:} Dipartimento di Matematica e Applicazioni, Universit\`a degli Studi di Milano-Bicocca, Via R.	Cozzi 55, 20125 Milano, Italy; \\ {\bf ORCID iD:} 0000-0002-5792-1674; {\bf e-mail}: andrea.galasso@unimib.it andrea.galasso.91@gmail.com \\  {\bf Scholarship: } Titolare di una borsa per l'estero dell'Istituto Nazionale di Alta Matematica} \text{ and }Chin-Yu Hsiao\footnote{\noindent{\bf Address:} Department of Mathematics, National Taiwan University, {\bf ORCHID iD:} 0000-0002-1781-0013; {\bf email}: chinyuhsiao@ntu.edu.tw; chinyu.hsiao@gmail.com}
}
\date{}
\begin{document}
\maketitle

\begin{abstract}  Consider a compact torsion free CR manifold $X$ and assume that $X$ admits a compact CR Lie group action $G$. Let $L$ be a $G$-equivariant rigid CR line bundle over $X$. It seems natural to consider the space of $G$-invariant CR sections in the high tensor powers as quantization space, on which a certain weighted $G$-invariant Fourier–Szeg\H{o} operator projects. Under certain natural assumptions, we show that the group invariant Fourier–Szeg\H{o} projector admits a full asymptotic expansion. As an application, if the tensor power of the line bundle is large enough, we prove that quantization commutes with reduction.
\end{abstract}
\tableofcontents
\bigskip
\textbf{Keywords:} CR manifolds, Quantization commutes with reduction

\textbf{Mathematics Subject Classification:} 32Vxx, 32A25, 53D50

\section{Introduction}\label{s-gue241018yydI}

The study of quantization commutes with reduction on various different geometric situation plays an important role in analysis, geometry and Mathematical physics.
The famous geometric quantization conjecture of 
Guillemin and Sternberg \cite{GS:82} states
that for a compact pre-quantizable symplectic manifold admitting 
a Hamiltonian action of a compact connected Lie group, the principle of 
``quantization commutes with reduction" holds.
This conjecture was first proved independently by Meinrenken \cite{M96} 
and Vergne \cite{Ver:96} for
the case where the Lie group is abelian, and  by Meinrenken \cite{M98}
 in the general case,
 then Tian-Zhang~\cite{TZ98} gave a purely analytic proof in general case
 with various generalizations.
In the case of a non-compact symplectic manifold $M$ 
with a compact connected Lie group action $G$, 
this question was solved by the fundamental paper of Ma-Zhang~\cite{MZ09,MZI}
as a solution to a conjecture of Vergne in her ICM 2006 plenary 
lecture \cite{Ve07}, see \cite{Ma10} for a survey.  

The study of quantization on CR, contact and Sasakian manifolds is closely related to many important geometric and analytic problems in CR and contact geometry (see,~\cite{hsiaohuang},~\cite{hmm},~\cite{f}). In~\cite{hmm}, we established geometric quantization on CR manifolds. 
Let $(X,T^{1,0}X)$ be a compact CR manifold with Reeb one form $\omega_0$. Assume that $X$ admits a CR compact Lie group action $G$ and the Lie group action $G$ preserves the Reeb one form $\omega_0$. The one form $\omega_0$ induces a CR moment map $\mu$. Under the assumptions that zero is a regular value of $\mu$ and $X$ is strongly pseudoconvex near $\mu^{-1}(0)$, it was shown in~\cite{hmm} that up to some finite dimensional subspaces of $L^2$ $G$-invariant CR functions and $L^2$ CR functions on the reduced space, quantization commutes with reduction. In~\cite{hmm}, they assumed that zero is a regular value of $\mu$. It is a natural question that if zero is not a regular value of $\mu$, can we still have geometric quantization?  A 
 special but important case is that if the action $G$ is horizontal in $X$, then zero is not a regular value of $\mu$. Let's see a simple example and explain our motivation.  Consider $\hat X:=M\times X$, where $M$ is a complex manifold. 
The Reeb one form on $X$ can be lifted to a Reeb one form on $\hat X$. 
If $G$ acts only in $M$, then $G$ is horizontal on $\hat X$. We observe that if $M$ admits a positive line bundle, we can study geometric quantization by using the curvature of the line bundle and the Reeb one form on $X$. Therefore, even zero is not a regular value of the moment map induced by the one form on $X$, we can still study geometric quantization for CR manifolds by using the curvature of the CR line bundle and the Reeb one form on the base manifold $X$. Thus, it is very natural to study geometric quantization for CR manifolds by using the curvature of the CR line bundle and the Reeb one form on the base manifold $X$. This is the stating point of this work. In this work, we consider a compact torsion free CR manifold with a CR compact Lie group action $G$. Let $L\To X$ be a $G$-equivariant rigid CR line bundle. We consider the space of $G$-invariant CR sections in the high tensor powers as quantization space, on which a certain weighted $G$-invariant Fourier–Szeg\H{o} operator projects.  Under certain natural assumptions of the curvature of the CR line bundle, the Reeb one form on $X$ and the Lie group action $G$,  we show that the group invariant  Fourier–Szeg\H{o} projector admits a full asymptotic expansion and if the tensor power of the line bundle is large enough, we prove that quantization commutes with reduction. 

We now formulate our results. We refer the reader to Section~\ref{s-gue241015yyd} for the terminology and notations used here. 
Let $(X,T^{1,0}X)$ be a compact orientable CR manifold of dimension $2n+1,\, n\geq 1$ with a transversal and CR Reeb vector field $T\in\mathcal{C}^\infty(X,TX)$ (see \eqref{e-gue240714yyd}). Let 
$\eta: \mathbb R\times X\To X$, $(\eta,x)\To\eta\cdot x$, be the $\mathbb R$-action induced by the flow of $T$ (see \eqref{e-gue240714yydI}) and let $\omega_0\in\mathcal{C}^\infty(X,T^*X)$ be the Reeb one form given by \eqref{e-gue240718yyd} below. Let $HX:={\rm Re\,}T^{1,0}X$ and let $J: HX\To HX$ be the complex structure map given by $J(u+\ol u)=iu-i\ol u$, $u\in T^{1,0}X$. Assume that $X$ admits an action of a compact Lie group $G$ of dimension $d$. We assume that 

\begin{ass}[Group action assumption]\label{ass:2}
	$G$ commutes with the $\mathbb R$-action $\eta$, preserves the CR structure, $g^\ast\omega_0=\omega_0$ on $X$ and $g_\ast J=Jg_\ast$ on $HX$, for every $g\in G$, where $g^*$ and $g_*$ denote the pull-back map and push-forward map of $G$, respectively. 
\end{ass}

Let $(L,h^L)\To X$ be a $G$-equivariant rigid CR line bundle (see Definition~\ref{Def:RigidCVB}, Definition~\ref{Def:RigidCVBz}), where $h^L$ is a $G\times\mathbb R$-invariant Hermitian metric on $L$ (see Definition~\ref{d-gue240715yyd}). 
Let $R^L$ be the curvature of $L$ induced by $h^L$ (see Definition~\ref{d-gue150808g}). Let $\omega_0\in\mathcal{C}^\infty(X,T^*X)$ be the Reeb one form given by \eqref{e-gue240718yyd} and let $\mathcal{L}_x$ be the Levi form of $X$ at $x\in X$ given by \eqref{e-gue240718yydI}. In this work, we assume that 

\begin{ass}[Curvature assumption] \label{ass:1}
There exists a bounded open interval $I\subset \mathbb{R}$ such that $R^L_x-2s\mathcal{L}_x$ is positive definite on $T^{1,0}_xX$ at every $x\in X$, for every $s\in I$.
\end{ass}

From now on, we assume that the Hermitian metrices $\langle\,\cdot\,|\,\cdot\,\rangle$ on $\mathbb CTX$ and $h^L$ are $G\times\mathbb R$-invariant. Note that $\langle\,\cdot\,|\,\cdot\,\rangle$ 
satisfies the following: $T^{1,0}X$ is orthogonal to $T^{0,1}X$, $\langle\, u \,|\, v \,\rangle$ is real if $u, v$ are real tangent vectors, $\langle\,T\,|\,T\,\rangle=1$ and $T$ is orthogonal to $T^{1,0}X\oplus T^{0,1}X$.

For every $\xi \in \mathfrak{g}$, we write $\xi_X$ to denote the infinitesimal vector field on $X$ induced by $\xi$. Put 
\[\underline{\mathfrak{g}}:=\set{\xi_X\in\mathcal{C}^\infty(X,TX);\, \xi\in\mathfrak{g}}.\] Let 
\[\gamma: X\To\mathfrak{g}^*\]
be the moment map induced by $h^L$ (see Definition~\ref{d-gue240719yyd} and Lemma~\ref{l-gue240719ycd}). Let 
\[\mu: X\To\mathfrak{g}^*\]
be the moment map induced by $\omega_0$ (see Definition~\ref{d-gue240921yyd}). 
For every $t\in  I$, let 
 \begin{equation}\label{e-gue240921yyd}
\hat\mu_t:=\gamma-2t\mu: X\To\mathfrak{g}^*.
 \end{equation}
 Recall that $I$ is the open bounded interval as in Assumption~\ref{ass:1}.
 In this work, we assume that

\begin{ass}\label{a-gue240720yyd}
$\hat\mu^{-1}_t(0)=\gamma^{-1}(0)\cap\mu^{-1}(0)$, zero is a regular value of $\hat\mu_t$, for all $t\in I$ and the action $G$ is free near $\hat\mu^{-1}_t(0)$. 
\end{ass} 

\begin{rem}\label{r-gue241015yyd}

(i) If the action $G$ is horizontal, that is, $\omega_0(\xi_X)=0$, for every $\xi_X\in\underline{\mathfrak{g}}$, then $\hat\mu=\gamma$ and $\hat\mu^{-1}_t(0)$ is independent of $t\in I$. 

(ii) If $\gamma=\omega_0$, then $\hat\mu^{-1}_t(0)$ is independent of $t\in I$. 

We refer the reader to Section~\ref{s-gue241018yyd} for more examples. 
 \end{rem}

 We now introduce our result about $G$-invariant weighted Fourier-Szeg\H{o} projection. Let $(\,\cdot\,|\,\cdot\,)_k$ be the $L^2$ inner product on $\mathcal{C}^\infty(X,L^k)$ induced by $\langle\,\cdot\,|\,\cdot\,\rangle$ and $h^{L^k}$. Let 
 \[\mathcal{H}^0_b(X,L^k)^G:=\set{u\in L^2(X,L^k);\, \ddbar_bu=0, g^*u=u,\  \ \forall g\in G},\]
 where $\ddbar_b$ is the tangential Cauchy-Riemann operator with values in $L^k$. Let $\Pi^G_k: L^2(X,L^k)\To\mathcal{H}^0_b(X,L^k)^G$ be the orthogonal projection ($G$-invariant Szeg\H{o} projection). 
 We extend $-iT$ to $L^2$ space by
\[\begin{split}
&-iT: {\rm Dom\,}(-iT)\subset L^2(X,L^k)\To L^2(X,L^k),\\
&{\rm Dom\,}(-iT)=\set{u\in L^2(X,L^k);\, -iTu\in L^2(X,L^k)}.
\end{split}\]
From~\cite[Theorems 4.1, 4.5]{hhl},  $-iT$ is self-adjoint with respect to $(\,\cdot\,|\,\cdot\,)_k$, ${\rm Spec\,}(-iT)$ is countable
and every element in ${\rm Spec\,}(-iT)$ is an eigenvalue of $-iT$, where ${\rm Spec\,}(-iT)$ denotes the spectrum of $-iT$. Let $\tau\in\mathcal{C}^\infty_c(I,\mathbb R_+)$, $\tau_k(t):=\tau(\frac{t}{k})$. Let $\tau_k(-iT)$ be the functional calculus of $-iT$ with respect to $\tau_k$. Since $T$ preserves CR structure, commutes with the action $G$ and $L$ is rigid, $\tau_k(-iT)$ commutes with the $G$-invariant Szeg\H{o} projection $\Pi^G_k$. 
Let 
\begin{equation}\label{e-gue241015ycdy}
P^G_{k,\tau^2}:=\Pi^G_k\circ\tau^2_k(-iT): L^2(X,L^k)\To\mathcal{H}^0_b(X,L^k)^G. 
\end{equation}

Let $s$ be a local $G\times\mathbb R$-invariant CR trivializing section defined on an open set $D\subset X$, $\abs{s}^2_{h^L}=e^{-2\Phi}$. The localized operator of $P^G_{k,\tau^2}$ is given by 
\begin{equation}\label{e-gue241015ycdt}
P^G_{k,\tau^2,s}:=s^{-k}e^{-k\Phi}P^G_{k,\tau^2}s^ke^{k\Phi}: \mathcal{C}^\infty_c(D)\To\mathcal{C}^\infty(D).
\end{equation}
Let $P^G_{k,\tau^2,s}(x,y)\in\mathcal{C}^\infty(D\times D)$ be the distribution kernel of $P^G_{k,\tau^2,s}$.
Let $Y:=\hat\mu^{-1}_t(0)$. The first main result of this work is the following 

 \begin{thm}[Semi-classical $G$-invariant Fourier Szeg\H{o} kernel] \label{thm:Gszego} 
 With the notations and assumptions above, let $\chi\in\mathcal{C}^\infty(X)$ with ${\rm supp\,}\chi\cap Y=\emptyset$. Then, 
 \begin{equation}\label{e-gue240903yydII}
 \chi P^{G}_{k,\tau^2}=O(k^{-\infty})\ \ \mbox{on $X$}.
 \end{equation}

 Let $p\in Y$ and let $s$ be a local $G\times\mathbb R$-invariant CR trivializing section defined on an open set $D\subset X$, $p\in D$, $\abs{s}^2_{h^L}=e^{-2\Phi}$. Then
\begin{equation}\label{e-gue240903yydI}
P_{k,\tau^2,s}^G(x,y)=\int_{\mathbb{R}} e^{ikA(x,y,t)}g(x,y,t,k)dt+O(k^{-\infty})
\end{equation}
on $D\times D$, where 
\begin{equation}\label{e-gue241017yydI}
\begin{split}
&g\in S^{n+1-d/2}_{{\rm loc\,}}(1;
D\times D\times I),\\
&{\rm supp\,}_tg(x,y,t,k)\subset I,
\end{split}
\end{equation}
is a symbol with expansion
\begin{equation}\label{e-gue241017yydII}
\begin{split}
&g(x,y,t,k)\sim \sum_{j=0}^{+\infty} g_j(x,y,t)k^{n+1-d/2-j} \quad\text{ in }S^{n+1-d/2}_{\mathrm{loc}}(1;D\times D\times I),\\
&g_j(x,y,t)\in\mathcal{C}^\infty(D\times D\times I),\ \ j=0,1,\ldots,\\
&{\rm supp\,}_tg_j(x,y,t)\subset I,\ \ j=0,1,\ldots.
\end{split}\end{equation}

Furthermore $A\in\mathcal{C}^{\infty}(D\times D\times I)$ is a complex phase function with ${\rm Im\,}A\geq0$, and  
	\begin{equation}\label{e-gue241017yyda}
		\mathrm{d}_x A(x,\,y,\,t)_{\vert x=y}=-\mathrm{d}_y A(x,\,y,\,t)_{\vert x=y}= -2\,\mathrm{Im}\,\overline{\partial}_b\Phi(x)+t\,\omega_0
	\end{equation}
for every $x\in Y$, $A(x,\,y,\,t)=0$ if and only if $x=y\in Y$ and 
\begin{equation}\label{e-gue241017yydb}
{\rm Im\,}A(x,y,t)\geq C\Bigr( d^2(x,Y)+d^2(x,Y)\Bigr),
\end{equation}
$x, y\in D$, where $C>0$ is a constant.  
For a local description of the phase $A$ in terms of local coordinates defined in Proposition \ref{prop:coordinates}, we refer to equation \eqref{eq:phaseA} and we refer the reader to Theorem~\ref{t-gue170105I} for more properties of ${\rm Im\,}A$.
\end{thm}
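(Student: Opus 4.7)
The plan is to combine three ingredients: (i) the local Fourier-integral description of the ordinary Szeg\H{o} projector $\Pi_k$ for the torsion-free CR manifold under the positivity assumption on $R^L-2t\mathcal L$, (ii) a Fourier realization of the functional calculus $\tau^2_k(-iT)$ in terms of the Reeb flow generated by $T$, and (iii) a stationary-phase analysis of the $G$-averaging $\Pi^G_k=\int_G g^*\Pi_k\,dg$ that produces the dimensional shift $k^{-d/2}$ and localizes the kernel on $Y=\hat\mu_t^{-1}(0)$.

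I would first use Fourier inversion, with the rescaling of the spectral variable $s=kt$, to represent $\tau^2_k(-iT)$ as an oscillatory integral in $t$ whose integrand involves the one-parameter Reeb propagator generated by $-iT$; the support condition on $\tau$ forces $t\in I$ in the final representation. Rigidity of $L$ and $G\times\mathbb R$-invariance of $h^L$ guarantee that this propagator is unitary on $L^2(X,L^k)$ and commutes with both $\Pi^G_k$ and the $G$-action. Then a Boutet de Monvel--Sj\"ostrand-type description of $\Pi_k$ for torsion-free CR manifolds under Assumption~\ref{ass:1} yields a local Fourier integral representation of $\Pi_k$ composed with the Reeb propagator, with complex phase $A_0(x,y,t)$ whose $x$-differential on the diagonal equals $-2\,\mathrm{Im}\,\overline{\partial}_b\Phi+t\,\omega_0$ and whose imaginary part vanishes quadratically transversely to the diagonal; this is the step at which the positivity in Assumption~\ref{ass:1} enters, by making the relevant Kohn Laplacian microlocally elliptic at Reeb frequencies $t\in I$.

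The crucial step is the $G$-average. Introducing local exponential coordinates $\xi\in\mathfrak g$ on $G$ (with a partition of unity to cover $G$), the combined kernel takes the schematic form
\[
\int_{\mathfrak g}\int_{\mathbb R} e^{ikA_0(\exp(\xi)\cdot x,\,y,\,t)}\,a(\xi,x,y,t,k)\,dt\,d\xi+O(k^{-\infty}).
\]
Using the formula $\mathrm d_x A_0|_{\mathrm{diag}}=-2\,\mathrm{Im}\,\overline{\partial}_b\Phi+t\,\omega_0$ together with the definitions of $\gamma$ and $\mu$, the $\xi$-derivative of the phase at $\xi=0$ is precisely $\langle\gamma(x)-2t\mu(x),\xi\rangle=\langle\hat\mu_t(x),\xi\rangle$. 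Assumption~\ref{a-gue240720yyd} is then exactly what is needed: away from $Y$ the $\xi$-phase has no critical point, so integration by parts in $\xi$ yields the $O(k^{-\infty})$ decay \eqref{e-gue240903yydII}; near $Y$ the regularity of $0$ for $\hat\mu_t$ and the freeness of the action make the $\xi$-Hessian non-degenerate, and stationary phase in $\xi$ contributes the factor $k^{-d/2}$, reduces the amplitude to a classical symbol of leading order $k^{n+1-d/2}$, and leaves the remaining phase $A(x,y,t)$ appearing in \eqref{e-gue240903yydI}. The $t$-independence of $\hat\mu_t^{-1}(0)$ is what allows a single symbol expansion in $(x,y,t,k)$ rather than a $t$-dependent family of different kernels.

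The main obstacle will be the verification of the phase properties \eqref{e-gue241017yyda}--\eqref{e-gue241017yydb}. Computing $\mathrm d_x A|_{x=y}$ requires superimposing the contribution of $A_0$ on the critical-point correction from the $\xi$-stationary phase and checking that the $\xi$-derivative contribution vanishes on $Y$ thanks to $\hat\mu_t|_Y=0$. Ruling out zeros of $A$ off the diagonal of $Y\times Y$ and proving the quadratic lower bound on $\mathrm{Im}\,A$ is the most delicate point: it requires combining the transverse non-degeneracy of $\hat\mu_t$ along $Y$ (controlling directions normal to the orbits $\underline{\mathfrak g}$) with the Levi-type positivity from Assumption~\ref{ass:1} (controlling the CR directions), and separately accounting for the Reeb direction, where the propagator in $t$ localizes only along a single frequency. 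Uniformity in $t\in I$ is not an additional obstacle, because $I$ is relatively compact, the critical manifold $Y$ is the same for every $t\in I$ by the first part of Assumption~\ref{a-gue240720yyd}, and all stationary-phase estimates are smooth in $t$.
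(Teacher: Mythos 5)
Your proposal follows essentially the same route as the paper: the non-equivariant weighted Fourier--Szeg\H{o} kernel is taken as a complex-phase oscillatory integral (the paper simply cites this as Theorem~\ref{t-gue240717yyd} from \cite{hhl} rather than re-deriving it via a Reeb propagator), and the $G$-average is then analyzed by Melin--Sj\"ostrand complex stationary phase in the group variables, with $\langle\hat\mu_t(x),\xi\rangle$ as the critical equation, so that integration by parts gives \eqref{e-gue240903yydII} away from $Y$ while the non-degenerate $\xi$-Hessian near $Y$ (Assumption~\ref{a-gue240720yyd}) produces the $k^{-d/2}$ shift and the phase $A$. The delicate points you flag (the diagonal differential of $A$ and the quadratic lower bound \eqref{e-gue241017yydb}) are treated in the paper exactly along the lines you indicate, using the Melin--Sj\"ostrand estimate for the imaginary part of the critical value together with the quadratic bound on ${\rm Im\,}\varphi$ and the non-degeneracy of the moment map, as in Theorem~\ref{t-gue170105I}.
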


We refer the reader to Section~\ref{s-gue170111w} and the discussion after \eqref{sk} for the semi-classcial notations used in Theorem~\ref{thm:Gszego}.

We now give a formula for the leading term of $g(x,y,t,k)$ in \eqref{e-gue241017yydII}. We need to recall one more piece of notation. Fix $Y$, for every $t\in I$, consider the linear map
\begin{equation}\label{e-gue240913ycd}
\begin{split}
R_x(t)\,:\, &\underline{\mathfrak{g}}_x \rightarrow \underline{\mathfrak{g}}_x\,\\
	&u \mapsto R_x(t)u,
\end{split}
\end{equation}
where 
\[\langle R_x(t)u \vert v\rangle = \langle -i(R^L(x)-2t\mathcal{L}_x),\,Ju\wedge v\rangle\,. \]
Let $\det R_x(t)=\mu_1(x,t)\cdots \mu_d(x,t)$, where $\mu_j(x,t)$, $j=1,\,\dots, d$ are the eigenvalues of $R_x(t)$. Furthermore, put $Y_x=\{g\cdot x;\,g\in G\}$. Then $Y_x$ is a $d$-dimensional submanifold of $X$. The $G$-invariant Hermitian metric induces a volume form $\mathrm{dV}_{Y_x}$ on $Y_x$. Put
\[V_{\mathrm{eff}}(x):=\int_{Y_x} \mathrm{dV}_{Y_x}. \]

\begin{thm} \label{thm:2}
	In the same setting of Theorem \ref{thm:Gszego}, for any $x \in Y\cap D$ and $t\in I$, we have In the same setting of Theorem \ref{thm:Gszego}, for any $x \in Y\cap D$ and $t\in I$, we have
	\begin{equation}
 \label{e-gue241007yyd}
	    g_0(x,x,t)=2^{-n-1+d}\,\frac{1}{V_{\mathrm{eff}}(x)}\,\lvert \det R_x(t)\rvert^{-1/2} \pi^{-n-1+d/2}\,\lvert \det(R^L_x-2t\mathcal{L}_x)\rvert\cdot\tau^2(t),\end{equation}
     where $\det(R^L_x-2t\mathcal{L}_x)=\lambda_1(x,t)\cdots\lambda_n(x,t)$, $\lambda_j(x,t)$, $j=1,\ldots,n$, are the eigenvalues of $R^L_x-2t\mathcal{L}_x$ with respect to $\langle\,\cdot\,|\,\cdot\,\rangle$.
\end{thm}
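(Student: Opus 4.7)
My plan is to reduce Theorem~\ref{thm:2} to the non-equivariant analogue by averaging over $G$ and then applying complex stationary phase in the Lie group direction.

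Step one is to invoke the local asymptotic expansion of the non-equivariant localized operator
\[
P_{k,\tau^2,s}(x,y):=s^{-k}e^{-k\Phi}(\Pi_k\circ\tau^2_k(-iT))(x,y)s^ke^{k\Phi},
\]
which on a torsion free CR manifold admits a Fourier integral representation
\[
P_{k,\tau^2,s}(x,y)=\int_{\mathbb R}e^{ik\psi(x,y,t)}a(x,y,t,k)\,dt+O(k^{-\infty}),
\]
with $a(x,y,t,k)\sim\sum_{j\geq 0}a_j(x,y,t)k^{n+1-j}$ and explicit leading coefficient on the diagonal of the form $c_n\,|\det(R^L_x-2t\mathcal{L}_x)|\,\tau^2(t)$ with a universal constant $c_n$ (of type $2^{-n-1}\pi^{-n-1}$). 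This is the standard CR Szeg\H{o}-kernel expansion adapted to the functional calculus of $-iT$.

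Step two is to realise $P^G_{k,\tau^2}$ as an average of $P_{k,\tau^2}$ over $G$. Using Assumption~\ref{ass:2}, the $G$-invariance of the trivialising section $s$ (which can be arranged near $Y$), and the fact that $\tau_k(-iT)$ commutes with $g^*$, one obtains
\[
P^G_{k,\tau^2,s}(x,y)=\int_G P_{k,\tau^2,s}(x,g\cdot y)\,dg+O(k^{-\infty})
\]
after localising by an appropriate cut-off. Substituting the non-equivariant expansion gives an oscillatory integral over $G\times I$ with phase $\psi(x,g\cdot y,t)$ satisfying ${\rm Im\,}\psi\geq 0$.

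Step three is the stationary phase analysis in $g$. For $x=y\in Y$, the first-order condition $d_g\psi(x,g\cdot x,t)\vert_{g=e}=0$ reduces, thanks to \eqref{e-gue241017yyda}, to $\langle 2\,{\rm Im\,}\ddbar_b\Phi-t\,\omega_0,\xi_X\rangle=0$ for every $\xi\in\mathfrak{g}$, which holds precisely because $\hat\mu_t(x)=\gamma(x)-2t\mu(x)=0$ on $Y$. Under Assumption~\ref{a-gue240720yyd} the critical point $g=e$ is isolated and transversally non-degenerate on $\underline{\mathfrak g}_x$, with Hessian
\[
(\xi,\eta)\longmapsto\langle -i(R^L_x-2t\mathcal L_x),\,J\xi_X\wedge\eta_X\rangle=\langle R_x(t)\xi_X\,|\,\eta_X\rangle,
\]
which is exactly the quadratic form of the operator $R_x(t)$ from \eqref{e-gue240913ycd}.

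Step four is the assembly. Complex stationary phase contributes $(2\pi/k)^{d/2}|\det R_x(t)|^{-1/2}$, lowering the $k$-power from $n+1$ to $n+1-d/2$ as in \eqref{e-gue241017yydII}. The change of variables between the Haar measure on $G$ and the induced volume form $\mathrm{dV}_{Y_x}$ on the orbit $Y_x$, together with the freeness of $G$ near $Y$, introduces the factor $V_{\mathrm{eff}}(x)^{-1}$. Combining with $a_0$ yields \eqref{e-gue241007yyd}. The main technical obstacle I expect is the careful bookkeeping of the numerical constants (powers of $2$ and $\pi$): reconciling the normalisation of $a_0$ from the non-equivariant expansion, the complex stationary phase determinant, and the Jacobian in the passage from Haar measure to orbit volume, so as to produce exactly the coefficient $2^{-n-1+d}\pi^{-n-1+d/2}$, and verifying that the Hessian computation produces $R_x(t)$ in precisely the normalisation of \eqref{e-gue240913ycd}. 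Unpacking the local coordinates of Proposition~\ref{prop:coordinates} and of \eqref{eq:phaseA} should make this computation transparent.
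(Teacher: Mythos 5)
Your proposal follows essentially the same route as the paper: the paper likewise writes $P^G_{k,\tau^2,s}$ as a localized average $\int_G\chi(g)P_{k,\tau^2,s}(x,g\cdot y)\,d\mu(g)$ of the non-equivariant kernel of Theorem~\ref{t-gue240717yyd}, applies the Melin--Sj\"ostrand stationary phase formula in the group coordinates (with the Hessian determinant \eqref{e-gue240814ycd} giving $|\lambda_1(t_0)\cdots\lambda_d(t_0)|^{1/2}$, i.e.\ $|\det R_x(t)|^{-1/2}$ up to the factor absorbed below), and then completes the constant bookkeeping exactly as you anticipate, by showing $m(0)=2^{d/2}/V_{\mathrm{eff}}(p)$ via the comparison of the Haar measure with the orbit volume form in the coordinates of Proposition~\ref{prop:coordinates}. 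So your plan is correct and coincides with the paper's proof, with only the explicit evaluation of the constants left to carry out.
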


Let $X_G:=Y/G$. By Assumptions \ref{a-gue240720yyd}, we have (see Proposition~\ref{p-gue240808yyd})

\begin{thm}\label{t-gue241015ycda}
$X_G$ is a torsion free CR manifold of dimension $2n-2d+1$ and $L_G:=L/G$ is a rigid CR line bundle over $X_G$. 
\end{thm}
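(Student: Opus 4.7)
The plan is to reduce everything to a single linear-algebra fact: the intersection $T^{1,0}X\cap\mathbb{C}TY$ has constant complex rank $n-d$ along $Y$ and meets $\underline{\mathfrak{g}}\otimes\mathbb{C}$ only at zero. Once this is in place, all remaining assertions---smoothness of the quotient, descent of the Reeb field, involutivity of the reduced CR structure and rigidity of $L_G$---are formal consequences of the principal-bundle structure $\pi:Y\to X_G$.

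\textbf{Smoothness and Reeb field.} By Assumption~\ref{a-gue240720yyd}, zero is a regular value of $\hat\mu_t$, so $Y=\hat\mu_t^{-1}(0)$ is a closed $G$-invariant submanifold of $X$ of dimension $2n+1-d$, and this submanifold does not depend on $t\in I$ since $Y=\gamma^{-1}(0)\cap\mu^{-1}(0)$. Since $G$ is compact and acts freely in a neighbourhood of $Y$, the quotient $\pi:Y\to X_G$ is a principal $G$-bundle, so $X_G$ inherits a smooth manifold structure of dimension $2(n-d)+1$. Rigidity of $(L,h^L)$ implies that $\gamma$ is invariant under the $\mathbb{R}$-action generated by $T$, and $T$-invariance of $\omega_0$ gives the same for $\mu$; thus $T\hat\mu_t=0$, and $T$ is tangent to $Y$. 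By Assumption~\ref{ass:2}, $T$ commutes with the $G$-action, so it descends to a nowhere-vanishing vector field $T_G$ on $X_G$, which will be the Reeb field of the reduced structure.

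\textbf{CR structure on $X_G$.} Set $\mathcal{H}_x:=T^{1,0}_xX\cap\mathbb{C}T_xY$ for $x\in Y$. The central claim is that $\dim_{\mathbb{C}}\mathcal{H}_x=n-d$ and $\mathcal{H}_x\cap(\underline{\mathfrak{g}}_x\otimes\mathbb{C})=\{0\}$. Because $\mathbb{C}T_xY=\ker d\hat\mu_t(x)\otimes\mathbb{C}$ has complex codimension $d$ in $\mathbb{C}T_xX$, both statements reduce to surjectivity of $d\hat\mu_t(x)|_{T^{1,0}_xX}:T^{1,0}_xX\to\mathfrak{g}^*_{\mathbb{C}}$, which by duality is equivalent to injectivity of the map sending $\xi\in\underline{\mathfrak{g}}_x$ to contraction of $\xi_X$ against $R^L_x-2t\mathcal{L}_x$, viewed in $(T^{1,0}_xX)^*$; this is exactly what Assumption~\ref{ass:1} asserts via the positive definiteness of $R^L_x-2t\mathcal{L}_x$. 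Setting $T^{1,0}X_G:=\pi_*\mathcal{H}$ therefore defines a rank-$(n-d)$ complex subbundle of $\mathbb{C}TX_G$. Involutivity of $T^{1,0}X_G$ follows from that of $T^{1,0}X$ since Lie brackets of $\pi$-projectable sections are projectable, and $T^{1,0}X_G\cap\overline{T^{1,0}X_G}=\{0\}$ is inherited from the corresponding property on $X$. Torsion-freeness of $X_G$ then follows because $T_G$ preserves $T^{1,0}X_G$, inherited from the analogous property of $T$ on $X$.

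\textbf{Line bundle and main difficulty.} The restriction $L|_Y$ is a $G$-equivariant line bundle, so the free $G$-action produces a smooth line bundle $L_G:=L|_Y/G\to X_G$. The $\mathbb{R}$-action on $L$ lifting $T$ commutes with the $G$-action and preserves $L|_Y$, so descends to an $\mathbb{R}$-action on $L_G$ covering $T_G$; the rigid CR structure on $L$ and the $G\times\mathbb{R}$-invariant metric $h^L$ descend in the same way, yielding a rigid CR line bundle $(L_G,h^{L_G})$ over $X_G$. The main obstacle in the argument is the linear-algebra surjectivity in the previous paragraph: Assumption~\ref{ass:1} plays exactly the role that K\"ahler positivity plays in the classical Guillemin--Sternberg reduction, and it is precisely the hypothesis that makes both the correct dimension of $\mathcal{H}_x$ and the triviality of $\mathcal{H}_x\cap(\underline{\mathfrak{g}}_x\otimes\mathbb{C})$ drop out simultaneously; everything else is routine descent through $\pi:Y\to X_G$.
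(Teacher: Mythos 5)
Your overall strategy (define the reduced CR bundle as the push-forward of $T^{1,0}X\cap\mathbb{C}TY$, prove a pointwise rank/transversality statement along $Y$ from the curvature hypothesis, then descend $T$, the CR structure and $L$ through the principal bundle $\pi:Y\to X_G$) is the same as the paper's, which works with $H^HY=HY\cap JHY$ and Lemma~\ref{l-gue240720yyd}, Proposition~\ref{p-gue240808yyd}. The gap is in the one step you yourself single out as the crux. You claim that surjectivity of $d\hat\mu_t(x)|_{T^{1,0}_xX}:T^{1,0}_xX\to\mathfrak{g}^*_{\mathbb{C}}$ is "by duality" equivalent to injectivity of $\xi\in\underline{\mathfrak{g}}_x\mapsto \iota_{\xi_X}(R^L_x-2t\mathcal{L}_x)\in(T^{1,0}_xX)^*$ and that this "is exactly what Assumption~\ref{ass:1} asserts." Since the target is $\mathfrak{g}^*_{\mathbb{C}}$, the dual map whose injectivity you need is the $\mathbb{C}$-linear map on the complexification $\mathfrak{g}_{\mathbb{C}}$, not on the real space $\underline{\mathfrak{g}}_x$. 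Because $d\gamma_\Phi-2t\,d\omega_0$ has no $(2,0)$-part on $\mathbb{C}HX$, for $v\in T^{1,0}_xX$ one has $\langle d\hat\mu_t(v),\xi\rangle=c\,(R^L_x-2t\mathcal{L}_x)(v,\xi_X^{0,1})$ with $c\neq0$; hence the kernel of the complexified dual map consists of those $\xi=\xi_1+i\xi_2\in\mathfrak{g}_{\mathbb{C}}$ with $\xi_X^{0,1}(x)=0$, i.e.\ it is nonzero exactly when $\underline{\mathfrak{g}}_x\cap J\underline{\mathfrak{g}}_x\neq\{0\}$. Positive definiteness of $R^L_x-2t\mathcal{L}_x$ alone does not exclude such an intersection (injectivity on real $\xi$, which positivity plus freeness does give, is strictly weaker than injectivity on $\mathfrak{g}_{\mathbb{C}}$: think of $(a,b)\mapsto a+ib$). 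So as written, neither the dimension count $\dim_{\mathbb{C}}\mathcal{H}_x=n-d$ nor $\mathcal{H}_x\cap(\underline{\mathfrak{g}}_x\otimes\mathbb{C})=\{0\}$ is justified.

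This is precisely the point the paper's Lemma~\ref{l-gue240720yyd} supplies: using that $0$ is a regular value of $\hat\mu_t$ it shows $\underline{\mathfrak{g}}^{\perp_b}|_Y=JHY$, whence $HX|_Y=\underline{\mathfrak{g}}|_Y\oplus J\underline{\mathfrak{g}}|_Y\oplus(HY\cap JHY)$ and in particular $\underline{\mathfrak{g}}_x\cap J\underline{\mathfrak{g}}_x=\{0\}$ on $Y$. Your argument can be repaired within your framework, but it takes an extra step rather than a citation of Assumption~\ref{ass:1}: if $u\in\underline{\mathfrak{g}}_x\cap J\underline{\mathfrak{g}}_x$ is nonzero, then both $u$ and $Ju$ lie in $\underline{\mathfrak{g}}_x\subset T_xY=\ker d\hat\mu_t(x)$ (here $G$-invariance of $Y$, hence Assumption~\ref{a-gue240720yyd} and equivariance of $\hat\mu_t$, is used), so $W=u-iJu\in T^{1,0}_xX\cap\ker d\hat\mu_t(x)_{\mathbb{C}}$; contracting the identity above with the $\xi$ whose infinitesimal vector field equals $u$ at $x$ gives $(R^L_x-2t\mathcal{L}_x)(W,\overline W)=0$, contradicting Assumption~\ref{ass:1}. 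Only after this (or after Lemma~\ref{l-gue240720yyd}) do your two pointwise claims, and with them the rest of your descent argument — which otherwise parallels the paper's Proposition~\ref{p-gue240808yyd} and the (equally brief) treatment of $L_G$ — go through. Note also that the identity $\langle d\hat\mu_t(v),\xi\rangle=c\,(R^L_x-2t\mathcal{L}_x)(v,\xi_X^{0,1})$ itself needs the vanishing of the $(2,0)$-component of $d\gamma_\Phi-2t\,d\omega_0$ (integrability plus rigidity of $\Phi$), which your "contraction against $R^L_x-2t\mathcal{L}_x$" phrasing tacitly assumes.
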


Theorem~\ref{t-gue241015ycda} says that  $X_G:=Y/G$ is endowed by a CR structure in a natural way. We say that $X_G$ is the CR reduction with respect to curvature data. From Theorem~\ref{t-gue241015ycda}, we can study quantization commutes with reduction. 

For every $\lambda\in{\rm Spec\,}(-iT)$, put 
\begin{equation}\label{e-gue241017ycdf}
    \mathcal{H}^0_{b,\lambda}(X,L^k)^G:=\set{u\in\mathcal{H}^0_{b}(X,L^k)^G;\,-iTu=\lambda u}.\end{equation}
    It is not difficult to see that ${\rm dim\,}\mathcal{H}^0_{b,\lambda}(X,L^k)^G<+\infty$. 
   Let  $T_{X_G}$ be the vector field on $X_G$ induced by the $\mathbb R$-action on $X_G$. For every $\lambda\in{\rm Spec\,}(-iT_{X_G})$, put 
\begin{equation}\label{e-gue241017ycdg}
    \mathcal{H}^0_{b,\lambda}(X_G,L^k_G):=\set{u\in\mathcal{H}^0_{b}(X_G,L^k_G);\,-iT_{X_G}u=\lambda u}.\end{equation}
    The following is the  quantization commutes with reduction result obtained in this work 

    \begin{thm}[Quantization commutes with reduction] \label{thm:quantandred}
	With the same notations and assumptions used above,  suppose that $I=(a,b)$, $a<b<+\infty$. There is a $k_0\in N$ such that for all $\lambda\in(ka,kb)$, $k\geq k_0$, $\lambda\in{\rm Spec\,}(-iT)\cap{\rm Spec\,}(-iT_{X_G})$,we have 

\begin{equation}\label{e-gue241017ycdu}
{\rm dim\,}\mathcal{H}^0_{b,\lambda}(X,L^k)^G={\rm dim\,}\mathcal{H}^0_{b,\lambda}(X_G,L^k_G).
\end{equation}

Moreover, if $\lambda\in{\rm Spec\,}(-iT)$ and $\lambda\notin{\rm Spec\,}(-iT_{X_G})$, $\lambda\in(ka,kb)$, $k\geq k_0$, we have that ${\rm dim\,}\mathcal{H}^0_{b,\lambda}(X,L^k)^G=0$. Similarly, if $\lambda\in{\rm Spec\,}(-iT_{X_G})$ and $\lambda\notin{\rm Spec\,}(-iT)$,$\lambda\in(ka,kb)$, $k\geq k_0$, then ${\rm dim\,}\mathcal{H}^0_{b,\lambda}(X_G,L^k_G)=0$.
\end{thm}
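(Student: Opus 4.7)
The plan is to establish the dimension equality by exhibiting, for each allowed $\lambda$, an asymptotic isomorphism between the two $\lambda$-eigenspaces, constructed via a lift-and-project procedure whose leading order is pinned down by Theorem~\ref{thm:Gszego} and Theorem~\ref{thm:2}.

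First, I would fix an eigenvalue $\lambda$ with $\lambda/k\in(a,b)$ and choose $\tau\in\mathcal{C}^\infty_c(I,\mathbb{R}_+)$ with $\tau\equiv 1$ on a small neighbourhood of $\lambda/k$ and support chosen (using the discreteness of ${\rm Spec\,}(-iT)$ and ${\rm Spec\,}(-iT_{X_G})$) to contain no other eigenvalue of either operator divided by $k$. Then $P^G_{k,\tau^2}$ agrees with the orthogonal projection onto $\mathcal{H}^0_{b,\lambda}(X,L^k)^G$ on the $G$-invariant CR space, and the corresponding projector on $X_G$, obtained by applying Theorem~\ref{thm:Gszego} to $(X_G,L_G)$ with trivial group action (which is legitimate by Theorem~\ref{t-gue241015ycda}), restricts to the identity on $\mathcal{H}^0_{b,\lambda}(X_G,L^k_G)$.

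Second, I would build a lift $\sigma_k\colon \mathcal{H}^0_{b,\lambda}(X_G,L^k_G)\to \mathcal{H}^0_{b,\lambda}(X,L^k)^G$ as follows: given $u$ on $X_G$, pull back along $Y\to Y/G=X_G$ to a $G$-invariant section on $Y$, extend smoothly and $G$-equivariantly to a tubular neighbourhood of $Y$ in $X$ in the coordinates of Proposition~\ref{prop:coordinates}, cut off outside the neighbourhood, and apply $P^G_{k,\tau^2}$. Commutativity of $P^G_{k,\tau^2}$ with $-iT$ together with the spectral cutoff ensures $\sigma_k u$ lies in the $\lambda$-eigenspace. Dually, define $\rho_k\colon \mathcal{H}^0_{b,\lambda}(X,L^k)^G\to \mathcal{H}^0_{b,\lambda}(X_G,L^k_G)$ by restriction to $Y$, descent to $X_G$, and application of the Szeg\H{o}-type projector on $X_G$. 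Using the integral representation \eqref{e-gue240903yydI}, the phase differential \eqref{e-gue241017yyda} and the leading symbol \eqref{e-gue241007yyd}, I would then perform stationary phase in $t$ and a Gaussian integration in the $Y$-normal directions to compute $(\sigma_k u\,|\,\sigma_k v)_k$ and $(\rho_k u'\,|\,\rho_k v')_{X_G,k}$. The critical point $t=\lambda/k$ is non-degenerate by Assumption~\ref{ass:1}; the resulting leading-order constant, combining $V_{\mathrm{eff}}(x)$, $\lvert\det R_x(t)\rvert$ and $\lvert\det(R^L_x-2t\mathcal{L}_x)\rvert$, should reproduce exactly the leading Szeg\H{o}-kernel constant on $X_G$, yielding $\rho_k\sigma_k=\tau^2(\lambda/k)\,\mathrm{Id}+O(k^{-1})=\mathrm{Id}+O(k^{-1})$ and a symmetric statement for $\sigma_k\rho_k$. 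For $k$ large this forces both maps to be isomorphisms, hence the dimension equality \eqref{e-gue241017ycdu}. The vanishing statements follow from the same construction: if $\lambda$ lies in only one spectrum, then either $\sigma_k$ or $\rho_k$ maps into a zero space, forcing the other eigenspace to be zero as well by the near-isometry property.

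The main obstacle I anticipate is the third step: the precise matching of the weighted $G$-invariant Fourier--Szeg\H{o} kernel on $X$ with the unweighted Szeg\H{o} kernel on $X_G$ at leading order. This requires executing stationary phase in $t$ simultaneously with a Morse-lemma reduction in the coordinates of Proposition~\ref{prop:coordinates} transverse to the orbits, and identifying the Jacobian of the quotient $Y\to X_G$ together with the $G$-orbit volume $V_{\mathrm{eff}}(x)$ with the combinatorial factor appearing in \eqref{e-gue241007yyd}. A secondary technical point is to control behaviour near $\partial I$ where the symbol degenerates; this is handled by enlarging $k_0$ so that $\lambda/k$ stays in a compact subinterval of $I$ on which $\tau\equiv 1$ is admissible.
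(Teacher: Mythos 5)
Your overall strategy (comparison maps between the two $\lambda$-eigenspaces whose compositions are asymptotically the identity, with constants pinned down by Theorems~\ref{thm:Gszego} and~\ref{thm:2}) is the same in spirit as the paper, which works with the single map $\sigma_k=k^{-d/4}P_{k,X_G,\tau^2}\circ f\circ\gamma_G\circ P^G_{k,\tau^2}$ (restriction to $Y$ and descent, weighted by $f=\pi^{d/4}\sqrt{V_{\mathrm{eff}}}\,\lvert\det R_x(t)\rvert^{-1/4}$) and analyzes $\sigma_k^*\sigma_k$ and $\sigma_k\sigma_k^*$. But your argument has a genuine gap at the decisive step. What the kernel expansions give you after stationary phase is equality of the \emph{leading symbols on the diagonal of} $Y$ only; off the diagonal both kernels are of the full order $k^{n+1-d/2}$ (resp.\ $k^{n+1-d}$ on $X_G$), so the difference between your composed operator and the reference projector is \emph{not} one order lower, and the claim $\rho_k\sigma_k=\mathrm{Id}+O(k^{-1})$ does not follow. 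The error is a semiclassical Fourier integral operator of the same order whose leading symbol merely vanishes at $\mathrm{diag}\bigl((Y\cap U)\times(Y\cap U)\bigr)$, and one must prove that such an operator has operator norm $o(1)$ as $k\to\infty$. This is exactly what the paper's Lemmas~\ref{l-gue170306s} and~\ref{l-gue170306} provide, via the iterated $(B_k^*B_k)^{2^N}$ trick, the quadratic lower bound \eqref{e-gue170106m} for ${\rm Im\,}A$ played against the vanishing of the symbol at the diagonal, and a Malgrange-preparation reduction in $t$; it also requires the composition calculus of Theorems~\ref{t-gue170301w} and~\ref{t-gue170301wI}, including the statement that diagonal vanishing of orders $N_1,N_2$ propagates to order $N_1+N_2$ under composition. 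Your proposal contains no substitute for this mechanism, and without it the passage from ``leading terms agree on the diagonal'' to ``isomorphism for $k\gg1$'' fails. Relatedly, the weight $f$ built from $V_{\mathrm{eff}}$ and $\det R_x(t)$ must be inserted between the two projectors even to achieve the on-diagonal match of \eqref{e-gue241007yyd} with the reduced Szeg\H{o} symbol; you identify this as your main obstacle but leave it unresolved.

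Two further points. Choosing $\tau$ whose support contains no other rescaled eigenvalue of $-iT$ or $-iT_{X_G}$ is impossible for a fixed $\tau\in\mathcal{C}^\infty_c(I)$ (eigenvalues in $(ka,kb)$ have spacing $O(1)$, hence spacing $O(1/k)$ after division by $k$), and it is also unnecessary: all one needs is $\tau\equiv1$ at $\lambda/k$ together with the fact that the projectors and the comparison map commute with $-iT$ and hence preserve the $\lambda$-eigenspaces, which is how the paper argues ($P^G_{k,\tau^8}u=\tau^8(\lambda/k)u=u$ on the eigenspace). Moreover, your extension-and-cutoff lift would have to be made $G\times\mathbb R$-equivariant for the spectral argument to apply, and its interaction with the projectors again requires the full composition calculus rather than a formal Gaussian integration; finally, enlarging $k_0$ does not confine $\lambda/k$ to a fixed compact subinterval of $I=(a,b)$, since for every $k$ the admissible $\lambda$ range over all of $(ka,kb)$, so the endpoint issue is not disposed of in the way you suggest.
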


Theorem~\ref{thm:quantandred} can be seen as an application of  Theorems \ref{thm:Gszego} and \ref{thm:2}. 
The idea of the proof of Theorem~\ref{thm:quantandred} comes from~\cite{hsiaohuang},~\cite{hmm}. 
 Furthermore, we recall that the way to establish the isometry from kernel expansion for $k$ large comes from \cite{mz}.


\subsection{Examples}\label{s-gue241018yyd} 

In this subsection, we will give some very simple examples that Assumption~\ref{a-gue240720yyd} holds. 

Let $M$ be a compact orbifold with cyclic singularities and let $(L,h^L)\To M$ be an orbifold line bundle, where $h^L$ is a Hermitian metric of $L$. Assume that the curvature of $L$ induced by $h^L$ is positive definite. Suppose that $M$ admits a compact holomorphic Lie group action $G$ and the action $G$ can be lifted to $L$. Assume further that $L$ and $h^L$ are $G$-invariant. Consider the circle bundle 
\[X:=\set{v\in L^*;\, |v|_{h^{L^*}}=1}.\]
Since the singularities of $M$ are cyclic, $X$ is a smooth CR manifold. Actually, $X$ is a quasi-regular Sasakian manifold. The line bundle $L$ can be considered as a CR line bundle over $X$ (we still denote by $L$).
$X$ is a torsion free CR manifold. We will use the same notations as in Section~\ref{s-gue241018yydI}. The $\mathbb R$-action on $X$ is the $S^1$-action on $X$ acting on the fiber of $X$. Take any $G$-invariant volume form $dV_M$ on $M$. For every $k, m\in\mathbb Z$, let 
$\set{f_1,\ldots,f_{d_{k,m}}}$ be an orthonormal basis for $\mathcal{H}^0(M,L^k\otimes L^m)^G$ with respect to the $L^2$ inner product induced by $h^L$ and $dV_M$, 
where $\mathcal{H}^0(M,L^k\otimes L^m)^G$ denotes the space of all $G$-invariant holomorphic sections of $M$ with values in $L^k\otimes L^m$. 
Let 
\[B^G_{k,m}(x):=\sum^{d_{k,m}}|f_j(x)|^2_{h^{L^k}}\in\mathcal{C}^\infty(M).\]
It is straightforward to check that
\begin{equation}\label{e-gue241018yyda}
P^G_{k,\tau^2,s}(x,x)=\sum_{m\in\mathbb Z}\tau^2(\frac{m}{k})B^G_{k,m}(\pi(x)),
\end{equation}
where $\pi: X\To M$ is the natural projection and $P^G_{k,\tau^2,s}(x,y)$ is as in \eqref{e-gue241015ycdt}. 

In this circle bundle case, we can check that $\gamma=c_0\mu$, for some constant $c_0\neq0$, where $\gamma$ and $\mu$ are as in the discussion before \eqref{e-gue240921yyd}.  From this observation, we see that $\hat\mu^{-1}_t(0)$ is independent of $t\in I$. It should be noticed that since $R^L$ is positive, we can take $I$ to be any small open interval of $0\in\mathbb R$ and hence Assumption~\ref{ass:1} holds. 
From Theorem~\ref{thm:Gszego}, \eqref{e-gue241018yyda}, we deduce that if zero is a regular value of $\hat\mu_t$, for some $t\in I$ and the action $G$ is free near $\hat\mu^{-1}_t(0)$, then 
\[\sum_{m\in\mathbb Z}\tau^2(\frac{m}{k})B^G_{k,m}(\pi(x))\] 
admits a full asymptotic expansion.

We now give another simple example. Let 
\[X:=\set{z\in\mathbb C^{n+1};\, \abs{z_1}^{2\alpha_1}+(\abs{z_2}^{2\alpha_2}\cdots+\abs{z_{n+1}}^{2\alpha_{n+1}})^{m}=1},\]
where $\alpha_1,\ldots,\alpha_{n+1},m\in\mathbb N$. Then $X$ is a compact CR manifold. $X$ admits a transversal and CR $\mathbb R$-action:
\[
\eta\cdot(z_1,\ldots,z_{n+1})=(e^{i\beta_1\eta}z_1,\ldots,e^{i\beta_{n+1}\eta}z_{n+1}),\]
where $(\beta_1,\ldots,\beta_{n+1})\in\mathbb R^{n+1}_+$.
Then, $X$ is a torsion free CR manifold. Let $L$ be the trivial line bundle with non-trivial Hermitian metric $\abs{1}^2_{h^L}=e^{-2\abs{z}^2}$. The CR manifold $X$ admits a $S^1$-action: 
\[G=S^1: e^{i\theta}\cdot z=(e^{-i\theta}z_1,e^{i\theta}z_2,\ldots,e^{i\theta}z_{n+1}).\]
We can calculate
\begin{equation}\label{e-gue241018ycdk}
\begin{split}
&\hat\mu^{-1}_t
(\frac{\pr}{\pr\theta})\\
&=\Bigr(\abs{z_1}^2-\sum^{n+1}_{j=2}\abs{z_j}^2\Bigr)-2t\Bigr(\alpha_1
\abs{z_1}^{2\alpha_1}-m(\sum^{n+1}_{j=2}
\abs{z_j}^{2\alpha_j})^{m-1}(\sum^{n+1}_{j=2}\alpha_j\abs{z_j}^{2\alpha_j})\Bigr).
\end{split}
\end{equation}
From \eqref{e-gue241018ycdk}, we see that if $\alpha_1=m$, $\alpha_2=\cdots=\alpha_{n+1}=1$, then $\hat\mu^{-1}_t(0)=\mu^{-1}(0)=\gamma^{-1}(0)$. Moreover, we can check that zero is a regular value of $\hat\mu_t$ at $t=0$ and we can take $I$ to be any small open interval of $0\in\mathbb R$ such that  Assumption~\ref{ass:1} holds. 

\section{Preliminaries}\label{s-gue241015yyd}

\subsection{Standard notations} \label{s-ssna}

We use the following notations: $\mathbb N=\{1,2,\ldots\}$ is the set of natural numbers excluding $0$ and $\mathbb N_0=\mathbb N\cup\{0\}$, $\mathbb R$ is the set of real numbers, ${\mathbb R}_+=\{x\in\mathbb R;\, x>0\}$, $\overline{\mathbb R}_+=\{x\in\mathbb R;\, x\geq0\}$. Furthermore we adopt the standard multi-index notation: we write $\alpha=(\alpha_1,\ldots,\alpha_n)\in\mathbb N^n_0$ 
if $\alpha_j\in\mathbb N_0$, $j=1,\ldots,n$. 

Let $M$ be a smooth paracompact manifold. We let $TM$ and $T^*M$ denote respectively the tangent bundle of $M$ and the cotangent bundle of $M$. The complexified tangent bundle $TM \otimes \mathbb{C}$ of $M$ will be denoted by $\Complex TM$, similarly we write $\Complex T^*M$ for the complexified cotangent bundle of $M$. Consider $\langle\,\cdot\,,\cdot\,\rangle$ to denote the pointwise
duality between $TM$ and $T^*M$; we extend $\langle\,\cdot\,,\cdot\,\rangle$ bi-linearly to $\Complex TM\times\Complex T^*M$. Let $B$ be a smooth vector bundle over $M$. The fiber of $B$ at $x\in M$ will be denoted by $B_x$. Let $E$ be a vector bundle over a smooth paracompact manifold $N$. We write
$B\boxtimes E^*$ to denote the vector bundle over $M\times N$ with fiber over $(x, y)\in M\times N$ consisting of the linear maps from $E_y$ to $B_x$.  

Let $Y\subset M$ be an open set. From now on, the spaces of distribution sections of $B$ over $Y$ and smooth sections of $B$ over $Y$ will be denoted by $\mathcal D'(Y, B)$ and $\mathcal{C}^\infty(Y, B)$, respectively.
Let $\mathcal E'(Y, B)$ be the subspace of $\mathcal D'(Y, B)$ whose elements have compact support in $Y$. Let $\mathcal{C}^\infty_c(Y,B):=\mathcal{C}^\infty(Y,B)\cap\mathcal{E}'(Y,B)$. 
For $m\in\Real$, let $H^m(Y, B)$ denote the Sobolev space
of order $m$ of sections of $B$ over $Y$. Let us denote
\begin{align*}
H^m_{\rm loc\,}(Y, B)=\big\{u\in\mathcal{D}'(Y, B);\, \varphi u\in H^m(Y, B),
    \, \forall\varphi\in \mathcal{C}^\infty_c(Y)\big\}\,,
\end{align*}
and
\begin{align*}
       H^m_{\rm comp\,}(Y, B)=H^m_{\rm loc}(Y, B)\cap\mathcal{E}'(Y, B)\,.
\end{align*}

Let $B$ and $E$ be smooth vector
bundles over paracompact orientable manifolds $M$ and $M_1$, respectively, equipped with smooth densities of integration. If
$A: \mathcal{C}^\infty_c(N,E)\To\mathcal D'(M,B)$
is continuous, we write $A(x, y)$ to denote the distribution kernel of $A$.
The following two statements are equivalent
\begin{enumerate}
	\item $A$ is continuous: $\mathcal E'(N,E)\To\mathcal{C}^\infty(M,B)$,
	\item $A(x,y)\in\mathcal{C}^\infty(M\times N,B\boxtimes E^*)$.
\end{enumerate}
If $A$ satisfies (1) or (2), we say that $A$ is smoothing on $M \times N$. 
We say that $A$ is properly supported if the restrictions of the two projections 
$(x,y)\mapsto x$, $(x,y)\mapsto y$ to ${\rm supp\,}(A(x,y))$
are proper.

Let $H(x,y)\in\mathcal D'(M\times N,B\boxtimes E^*)$. We write $H$ to denote the unique continuous operator $\mathcal C^\infty_c(N,E)\To\mathcal D'(M,B)$ with distribution kernel $H(x,y)$. In this work, we identify $H$ with $H(x,y)$. 

\subsection{Some standard notations in semi-classical analysis}\label{s-gue170111w}

Let $W_1$ be an open set in $\Real^{N_1}$ and let $W_2$ be an open set in $\Real^{N_2}$. Let $E$ and $F$ be vector bundles over $W_1$ and $W_2$, respectively. 
A $k$-dependent continuous operator
$A_k: \mathcal{C}^\infty_c(W_2,F)\To\mathcal{D}'(W_1,E)$ is called $k$-negligible on $W_1\times W_2$
if, for $k$ large enough, $A_k$ is smoothing and, for any $K\Subset W_1\times W_2$, any
multi-indices $\alpha$, $\beta$ and any $N\in\mathbb N$, there exists $C_{K,\alpha,\beta,N}>0$
such that
\[
\abs{\pr^\alpha_x\pr^\beta_yA_k(x, y)}\leq C_{K,\alpha,\beta,N}k^{-N}\:\: \text{on $K$},\ \ \forall k\gg1.
\]
In that case we write
\[A_k(x,y)=O(k^{-\infty})\:\:\text{on $W_1\times W_2$,} \quad
\text{or} \quad
A_k=O(k^{-\infty})\:\:\text{on $W_1\times W_2$.}\]
If $A_k, B_k: \mathcal{C}^\infty_c(W_2, F)\To\mathcal{D}'(W_1, E)$ are $k$-dependent continuous operators,
we write $A_k= B_k+O(k^{-\infty})$ on $W_1\times W_2$ or $A_k(x,y)=B_k(x,y)+O(k^{-\infty})$ on $W_1\times W_2$ if $A_k-B_k=O(k^{-\infty})$ on $W_1\times W_2$. 
When $W=W_1=W_2$, we sometime write ``on $W$".

Let $X$ and $M$ be smooth manifolds and let $E$ and $F$ be vector bundles over $X$ and $M$, respectively. Let $A_k, B_k: \mathcal{C}^\infty(M,F)\To\mathcal{C}^\infty(X,E)$ be $k$-dependent smoothing operators. We write $A_k=B_k+O(k^{-\infty})$ on $X\times M$ if on every local coordinate patch $D$ of $X$ and local coordinate patch $D_1$ of $M$, $A_k=B_k+O(k^{-\infty})$ on $D\times D_1$.
When $X=M$, we sometime write on $X$.

We recall the definition of the semi-classical symbol spaces

\begin{defn} \label{d-gue140826}
Let $W$ be an open set in $\Real^N$. Let
\[
\renewcommand{\arraystretch}{1.2}
\begin{array}{c}
S(1)=S(1;W):=\Big\{a\in\mathcal{C}^\infty(W);\, \forall\alpha\in\mathbb N^N_0:
\sup_{x\in W}\abs{\pr^\alpha a(x)}<\infty\Big\},\\
S^0_{{\rm loc\,}}(1;W):=\Big\{(a(\cdot,m))_{m\in\Real};\,\forall\alpha\in\mathbb N^N_0,
\forall \chi\in \mathcal{C}^\infty_c(W)\,:\:\sup_{m\in\Real, m\geq1}\sup_{x\in W}\abs{\pr^\alpha(\chi a(x,m))}<\infty\Big\}\,.
\end{array}
\]
Hence $a(\cdot,k)\in S^\ell_{{\rm loc}}(1;W)$ if for every $\alpha\in\mathbb N^N_0$ and $\chi\in\mathcal{C}^\infty_c(W)$, there
exists $C_\alpha>0$ independent of $k$, such that $\abs{\pr^\alpha (\chi a(\cdot,k))}\leq C_\alpha k^{\ell}$ holds on $W$.

Consider a sequence $a_j\in S^{\ell_j}_{{\rm loc\,}}(1)$, $j\in\N_0$, where $\ell_j\searrow-\infty$,
and let $a\in S^{\ell_0}_{{\rm loc\,}}(1)$. We say
\[
a(\cdot,k)\sim
\sum\limits^\infty_{j=0}a_j(\cdot,k)\:\:\text{in $S^{\ell_0}_{{\rm loc\,}}(1)$},
\]
if, for every
$N\in\N_0$, we have $a-\sum^{N}_{j=0}a_j\in S^{\ell_{N+1}}_{{\rm loc\,}}(1)$ .
For a given sequence $a_j$ as above, we can always find such an asymptotic sum
$a$, which is unique up to an element in
$S^{-\infty}_{{\rm loc\,}}(1)=S^{-\infty}_{{\rm loc\,}}(1;W):=\cap _\ell S^\ell_{{\rm loc\,}}(1)$.

Let $\ell\in\Real$ and let
\[
S^\ell_{{\rm loc},{\rm cl\,}}(1):=S^\ell_{{\rm loc},{\rm cl\,}}(1;W)
\] be the set of all $a\in S^\ell_{{\rm loc}}(1;W)$ such that we can find $a_j\in\mathcal{C}^\infty(W)$ independent of $k$, $j=0,1,\ldots$,  such that 
\[
a(\cdot,k)\sim
\sum\limits^\infty_{j=0}k^{\ell-j}a_j(\cdot)\:\:\text{in $S^{\ell_0}_{{\rm loc\,}}(1)$}.
\]

Similarly, we can define $S^\ell_{{\rm loc\,}}(1;Y,E)$, $S^\ell_{{\rm loc\,},{\rm cl\,}}(1;Y,E)$  in the standard way, where $Y$ is a smooth manifold and $E$ is a vector bundle over $Y$. 
\end{defn}

\subsection{CR geometry and CR line bundles}
\label{sec:crgeom}

We recall some notations concerning CR geometry. Let $(X, T^{1,0}X)$ be a compact and orientable CR manifold of dimension $2n+1$, $n\geq 1$, where $T^{1,0}X$ is a CR structure of $X$. There is a unique sub-bundle $HX$ of $TX$ such that $\mathbb{C}\,HX=T^{1,0}X \oplus T^{0,1}X$, $T^{0,1}X=\overline{T^{1,0}X}$. Let $J:HX\To HX$ be the complex structure map given by $J(u+\ol u)=i u-i\ol u$, for every $u\in T^{1,0}X$. 
By complex linear extension of $J$ to $\mathbb{C}\,TX$, the $i$-eigenspace of $J$ is $T^{1,0}X$. We shall also write $(X, HX, J)$ to denote a CR manifold. In this work, we assume that

\begin{ass}\label{a-gue240714yyd}
There is a global vector field $T\in\mathcal{C}^\infty(X,TX)$ such that 
\begin{equation}\label{e-gue240714yyd}
\begin{split}
&T^{1,0}X\oplus T^{0,1}X\oplus\mathbb CT(x)=\mathbb CT_xX,\ \ \mbox{for all $x\in X$},\\
&[T,\mathcal{C}^\infty(X,T^{1,0}X)]\subset\mathcal{C}^\infty(X,T^{1,0}X).
\end{split}
\end{equation}
We say that $T$ is a transversal CR vector field. 
\end{ass}

From now on, we fix $T\in\mathcal{C}^\infty(X,TX)$ such that \eqref{e-gue240714yyd} hold. Let 
\begin{equation}\label{e-gue240714yydI}
\begin{split}
\eta: \mathbb R\times X&\To X,\\
(\eta,x)&\To\eta\cdot x,
\end{split}
\end{equation}
be the $\mathbb R$-action induced by the flow of $T$, that is, 
\[(Tu)(x)=\frac{\pr}{\pr\eta}(u(\eta\cdot x))|_{\eta=0},\ \ \mbox{for all $u\in\mathcal{C}^\infty(X)$}.\]
Let $\omega_0\in\mathcal{C}^\infty(X,T^*X)$ be the global one form given by 
\begin{equation}\label{e-gue240718yyd}
\begin{split}
\mbox{$\langle\,\omega_0(x)\,,\,u\,\rangle=0$, for every $u\in H_xX$}, \\
\omega_0(T)\equiv -1,\quad 		\mathrm{d}\omega_0(T,\cdot)\equiv0\ \ \mbox{on $TX$}.
\end{split}
\end{equation}
For each $x \in X$, we define a Hermitian quadratic form $\mathcal{L}_x$ on $T^{1.0}_xX$ as follows: for $U, V \in T^{1,0}_xX$,
\begin{equation}\label{e-gue240718yydI}
\mathcal{L}_x(U,\overline{V}) = \frac{1}{2}\,\mathrm{d}\omega_0(JU, \overline{V}) = -\frac{1}{2i}\,\mathrm{d}\omega_0(U,\overline{V}).
\end{equation}
The Hermitian quadratic form $\mathcal{L}_x$ on $T^{1,0}_xX$ is called Levi form at $x$.

 Fix a smooth Hermitian metric $\langle\, \cdot \,|\, \cdot \,\rangle$ on $\mathbb{C}TX$ so that $T^{1,0}X$ is orthogonal to $T^{0,1}X$, $\langle\, u \,|\, v \,\rangle$ is real if $u, v$ are real tangent vectors, $\langle\,T\,|\,T\,\rangle=1$ and $T$ is orthogonal to $T^{1,0}X\oplus T^{0,1}X$. For $u \in \mathbb{C}TX$, we write $|u|^2 := \langle\, u\, |\, u\, \rangle$. Denote by $T^{*1,0}X$ and $T^{*0,1}X$ the dual bundles of $T^{1,0}X$ and $T^{0,1}X$, respectively. They can be identified with sub-bundles of the complexified cotangent bundle $\mathbb{C}T^*X$. For $q=0,1,\ldots,n$, let $T^{*0,q}X:=\Lambda^q(T^{*0,1}X)$. Let $\Omega^{0,q}(X):=\mathcal{C}^\infty(X,T^{*0,q}X)$ and for an open set $D\subset X$, let $\Omega^{0,q}_c(D):=\mathcal{C}^\infty_c(D,T^{*0,q}X)$. 

Let $\ddbar_b\,:\,\Omega^{0,q}(X)\rightarrow \Omega^{0,q+1}(X)$ be the tangential Cauchy-Riemann operator. Since the $\mathbb{R}$-action is CR, it is straightforward to see that
\[ T\overline{\partial}_b= \overline{\partial}_bT \]
on $\Omega^{0,q}(X)$.

\begin{defn}
	Let $D$ be a sufficiently small open set. We say that a function $u\in\mathcal{C}^{\infty}(D)$ is rigid if $T u = 0$. We say that a function $u\in \mathcal{C}^{\infty}(D)$ is CR if $\overline{\partial}_bu=0$. We say that $u\in\mathcal{C}^\infty(D)$ is rigid CR if  $\ddbar_bu=0$ and $Tu=0$.
\end{defn} 

The following definitions for CR vector bundles can be found in \cite{hn}.
\begin{defn}\label{Def:CRVB}
	A complex  line bundle $\pi\,:\,L\rightarrow X$ is called a CR line bundle if 
	\begin{itemize}
		\item [(i)] \(L\) is a CR manifold of codimension \(2\),
		\item [(ii)] \(\pi\colon L\to X\) is a CR submersion,
		\item [(iii)] \(L\oplus L\ni(\xi_1,\xi_2)\to \xi_1+\xi_2\in L\) and \(\mathbb C\times E\ni(\lambda,\xi)\to \lambda \xi\in L\) are CR maps.
	\end{itemize}
	A smooth section \(s\in\mathcal{C}^\infty(U,L)\) defined on an open set \(U\subset X\) is called CR section if the map \(s\colon U\to L\) is CR.
\end{defn}

Let $L$ be a CR line bundle over $X$. The tangential Cauchy-Riemann operator can be defined on sections of $L$:
\[\ddbar_b: \Omega^{0,q}(X,L)\To\Omega^{0,q+1}(X,L^k),\]
where $\Omega^{0,q}(X,L):=\mathcal{C}^\infty(X,L\otimes T^{*0,q}X)$. 

\begin{defn}\label{Def:LocTriv}
 A CR line bundle $L\To X$ is called locally CR trivializable if for any point \(p\in X\)  there exists an open neighborhood \(U\subset X\) such that \(L|_U\) is CR line bundle isomorphic to the trivial CR vector bundle \(U\times\mathbb C\).
\end{defn}

\begin{defn}\label{Def:CRBL}
Let $L$ be a CR line bundle over $X$. A CR bundle lift of \(T\) to $L$ is a smooth linear partial differential operator $$T=T^L\colon\mathcal{C}^\infty(X,L)\to   \mathcal{C}^\infty(X,L)$$ such that
	\begin{itemize}
		\item [(i)] \(T^{L}(f\cdot s)=T(f)\cdot s+fT^L(s)\) for all \(f\in \mathcal{C}^\infty(X)\) and \(s\in\mathcal{C}^\infty(X,L)\),
		\item [(ii)] \([T^{L},\overline{\partial}_b]=0\).
	\end{itemize}
\end{defn} 

In order to define \([T^{L},\overline{\partial}_b]\) we need to define \(T^{L}\) on \((0,1)\) forms with values in \(L\) first. But this definition follows immediately from the fact that any $w\in\Omega^{0,1}(X,L)$
locally can be written $w=s\otimes g$ where $g$ is a $(0, 1)$-form and $s$ is a local frame of $L$  and that \(T\) is defined also for \((0 ,q)\)-forms using the Lie derivative.

\begin{defn}\label{Def:RigidCVB}
	A CR line bundle $L\To X$ with a CR bundle lift \(T^L\) of \(T\)  is called rigid CR (with respect to \(T^L\)) if for every point \(p\in X\) there exists an open neighborhood \(U\) around \(p\) and a CR frame $s$ of \(L|_U\) with \(T^L(s)=0\).
\end{defn}

A section $s\in\mathcal{C}^\infty(X,L)$ is called a rigid CR section if $T^Ls=0$ and $\overline\partial_b s=0$. The frame $s$ in Definition~\ref{Def:RigidCVB}  is called a rigid CR frame of $L|_U$.
Note that it follows from~\cite[Lemma 2.6]{hhl} that any rigid CR line bundle is locally CR trivializable. The following is well-known~\cite[Lemma 2.10]{hhl}

\begin{lem}\label{l-gue240715yyd}
	Let $L\To X$ be a CR line bundle over $X$ and recall that $T\in  \mathcal{C}^\infty(X,TX)$ is a CR vector field. The following are equivalent:
	\begin{itemize}
		\item [(i)] \(T\) has a CR bundle lift \(T^L\) such that $L\To X$ is rigid CR with respect to \(T^L\).
		\item [(ii)] There exist an open cover \(\{U_j\}_{j\in\N}\) of \(X\) and CR frames \(\{s_j\}\) for \(E|_{U_j}\), \(j\in\N\), such that the corresponding transition functions are rigid CR. 
	\end{itemize}
\end{lem}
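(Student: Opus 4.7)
The plan is to establish the two implications of Lemma~\ref{l-gue240715yyd} by direct computation with the Leibniz rule, exploiting the fact that the rigidity and CR conditions on a frame transfer into rigidity and CR conditions on the transition functions via straightforward applications of $T^L$ and $\overline{\partial}_b$.

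For $(i) \Rightarrow (ii)$, I would simply invoke Definition~\ref{Def:RigidCVB}: at every $p \in X$ there exist an open neighborhood $U_p$ and a CR frame $s_p$ of $L|_{U_p}$ with $T^L(s_p) = 0$. Passing to a countable subcover yields $\{U_j\}_{j\in\mathbb N}$ with rigid CR frames $\{s_j\}$. On each overlap $U_j \cap U_k$, write $s_k = g_{jk}\, s_j$. Applying $\overline{\partial}_b$ and using that both frames are CR gives $(\overline{\partial}_b g_{jk})\, s_j = 0$, hence $\overline{\partial}_b g_{jk} = 0$. Applying $T^L$ and invoking the Leibniz rule from Definition~\ref{Def:CRBL}(i) together with $T^L s_j = T^L s_k = 0$ gives $T(g_{jk})\, s_j = 0$, hence $T g_{jk} = 0$. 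Thus each transition function $g_{jk}$ is rigid CR.

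For $(ii) \Rightarrow (i)$, I would construct $T^L$ locally on $U_j$ by declaring $T^L(s_j) := 0$ and forcing the Leibniz formula $T^L(f s_j) := T(f)\, s_j$ for every $f \in \mathcal{C}^\infty(U_j)$. Consistency on overlaps is immediate from $T g_{jk} = 0$: expanding a local section in both frames as $f s_k = (f g_{jk}) s_j$ one gets $T(f g_{jk}) = T(f) g_{jk} + f T(g_{jk}) = T(f) g_{jk}$, so the two prescriptions agree. To verify $[T^L, \overline{\partial}_b] = 0$ I would extend $T^L$ to $L$-valued $(0,q)$-forms by $T^L(\omega \otimes s_j) := (\mathcal{L}_T \omega) \otimes s_j$, which makes sense because Assumption~\ref{a-gue240714yyd} forces $\mathcal{L}_T$ to preserve $T^{*0,q}X$. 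A direct computation on $f s_j$ then reduces both $\overline{\partial}_b T^L(f s_j)$ and $T^L \overline{\partial}_b(f s_j)$ to $(\overline{\partial}_b T f) \otimes s_j = (T\overline{\partial}_b f)\otimes s_j$, using $\overline{\partial}_b s_j = 0$ together with the identity $T\overline{\partial}_b = \overline{\partial}_b T$ on functions that was noted just before Definition~\ref{Def:CRVB}.

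The main obstacle, which is more notational than substantive, is checking that the extension $T^L(\omega \otimes s_j) := (\mathcal{L}_T \omega) \otimes s_j$ is independent of the chosen rigid CR frame. Writing $\omega_j \otimes s_j = \omega_k \otimes s_k$ on overlaps forces $\omega_j = g_{jk}\,\omega_k$, so the Leibniz rule for the Lie derivative yields $\mathcal{L}_T \omega_j = T(g_{jk})\omega_k + g_{jk}\,\mathcal{L}_T \omega_k = g_{jk}\,\mathcal{L}_T \omega_k$ thanks again to $T g_{jk} = 0$, giving the required compatibility. With this in place both implications reduce to algebraic bookkeeping and no analytic input is needed.
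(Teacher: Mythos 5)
Your argument is correct and is essentially the standard proof: the paper itself does not prove this lemma but quotes it as well-known from \cite[Lemma 2.10]{hhl}, and your two implications (reading off rigid CR transition functions from the Leibniz rule, and conversely gluing the locally defined lift $T^L(fs_j):=T(f)s_j$ using $Tg_{jk}=\ddbar_bg_{jk}=0$, with the extension to $L$-valued $(0,q)$-forms via $\mathcal{L}_T$ and the identity $T\ddbar_b=\ddbar_bT$) is exactly the expected route. The only points worth making explicit are the ones you already gesture at: that $\mathcal{L}_T$ preserves $T^{*0,q}X$ because $[T,\mathcal{C}^\infty(X,T^{1,0}X)]\subset\mathcal{C}^\infty(X,T^{1,0}X)$, and that $\ddbar_b$ on $L$-valued forms is computed in the CR frames $s_j$ (so $\ddbar_b(gs_j)=(\ddbar_bg)\otimes s_j$), which is legitimate since the existence of CR frames gives local CR trivializations.
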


From now on, we assume that $L\To X$ is a rigid CR line bundle (with respect to the lifting $T^L$). To simplify the notations, we also write $T$ to denote the lifting $T^L$. 

\begin{defn}\label{d-gue240715yyd}
Let $L\To X$ be a rigid CR line bundle (with respect to the lifting $T^L$). 
Let $h^L$ be a Hermitian metric on $L$. We say that $h^L$ is a rigid Hermitian metric or $\mathbb R$-invariant Hermitian metric if for every local rigid frame $s$ of $L$, we have $T|s|^2_{h^L}=0$.
\end{defn}

By Lemma~\ref{l-gue240715yyd}, there exists
an open covering $(U_j)^N_{j=1}$ and a family of rigid CR trivializing frames $\{s_j\}_{j=1}^N$ with each $s_j$ defined on $U_j$ and the transition functions between different rigid CR frames are rigid CR functions.
Let $L^k$ be the $k$-th tensor power of $L$.
Then $\{s_j^{k}\}^N_{j=1}$ is a family of  rigid CR trivializing frames on each $U_j$. Let
$\overline\partial_b:\Omega^{0,q}(X, L^k)\rightarrow\Omega^{0,q+1}(X, L^k)$ be the tangential Cauchy-Riemann operator.
Since $L^k$ is rigid  CR, we have $\overline\partial_b f=\overline\partial_b f_j\otimes s_j^k$, $Tf=(Tf_j)\otimes s_j^k$ for any $f=f_j\otimes s_j^k\in\Omega^{0, q}(X, L^k)$ and
\begin{equation}\label{e-gue150508d}
T\ddbar_b=\ddbar_bT\ \ \mbox{on $\Omega^{0,q}(X,L^k)$}.
\end{equation} 

Let $h^L$ be a rigid Hermitian fiber metric on $L$. The local weight of $h^L$
with respect to a local rigid CR trivializing section $s$ of $L$ over an open subset $D\subset X$
is the function $\Phi\in\mathcal{C}^\infty(D, \mathbb R)$ for which
\begin{equation}\label{e-gue150808g}
|s(x)|^2_{h^L}=e^{-2\Phi(x)}, x\in D.
\end{equation}
We denote by $\Phi_j$ the weight of $h^L$ with respect to $s_j$.

\begin{defn}\label{d-gue150808g}
Let $h^L$ be a rigid Hermitian metric on $L$.
The curvature of $(L,h^L)$ is the the Hermitian quadratic form $R^L=R^{(L,h^L)}$ on $T^{1,0}X$
defined by
\begin{equation}\label{e-gue150808w}
R_p^L(U, V)=\,\big\langle d(\overline\partial_b\Phi_j-\partial_b\Phi_j)(p),
U\wedge\overline V\,\big\rangle,\:\: U, V\in T_p^{1,0}X,\:\: p\in U_j.
\end{equation}
\end{defn}

Due to~\cite[Proposition 4.2]{HM12}, $R^L$ is a well-defined global Hermitian form,
since the transition functions between different frames $s_j$ are annihilated by $T$. 

\begin{defn}\label{d-gue150808gI}
We say that $(L,h^L)$ is positive if there is an interval $I\subset\mathbb R$ such that the associated curvature $R^L_x-2s\mathcal{L}_x$ is positive definite
at every $x\in X$, for every $s\in I$.
\end{defn}

In this work, we assume that 

\begin{ass}\label{a-gue240715yyd}
There is a positive rigid Hermitian metric $h^L$ on $L$. 
\end{ass}

From now on, we fix a positive rigid Hermitian metric $h^L$ on $L$ and we have 

\begin{equation}\label{e-gue240715ycd}
\mbox{$R^L_x-2s\mathcal{L}_x$ is positive definite, for every $x\in X$, $s\in I$},
\end{equation}
where $I\subset\mathbb R$ is a bounded open interval. From~\cite[Corollary 3.8]{hhl}, we see that the $\mathbb R$-action on $X$ comes form a torus action on $X$. The following is well-known~\cite[Theorem 3.12]{hhl}

\begin{thm}\label{t-gue240716yyd}
With the assumptions and notations above, we can find local CR rigid trivializations of $L$ defined on open sets $D_j$, $j=1,\ldots,N$, such that $X=\bigcup^N_{j=1}D_j$, and
\begin{equation}\label{e-gue240716yyd}
D_j=\bigcup_{\eta\in\mathbb R}\{\eta\cdot x;\, x\in D_j\}
\end{equation}
for every $j=1,\ldots,N$.
\end{thm}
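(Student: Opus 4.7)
My plan is to build each $D_j$ as the $\mathbb R$-saturation of a small open set on which a rigid CR trivialization of $L$ already exists, and then to use compactness of the closures of $\mathbb R$-orbits (inherited from the torus structure recalled just before the statement) to check that the local trivialization extends consistently to the whole saturation. The key technical input is that $T^L$ lifts $T$ to a fiber-linear vector field on the total space of $L$, whose flow produces a linear lift of the $\mathbb R$-action on $X$ to $L$; a local section $s$ with $T^L s=0$ is then (infinitesimally) equivariant along the orbits contained in its domain, and my goal is to promote this into a genuine $\mathbb R$-equivariance after enlarging the domain.

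Given $x\in X$, Definition~\ref{Def:RigidCVB} provides an open neighborhood $V_x$ of $x$ and a rigid CR frame $s_x\in\mathcal{C}^\infty(V_x,L)$. Set
\[D_x:=\bigcup_{\eta\in\mathbb R}\eta\cdot V_x,\]
which is open and $\mathbb R$-invariant. I define a section $\tilde s_x$ on $D_x$ by $\tilde s_x(\eta\cdot u):=\eta\cdot s_x(u)$ for $u\in V_x$ and $\eta\in\mathbb R$, where the action on the right is the lifted flow on $L$. Since the $\mathbb R$-action factors through the compact torus $\mathbb T^d$ acting on $X$ (\cite[Corollary~3.8]{hhl}), after shrinking $V_x$ to lie in a slice transverse to the $\mathbb T^d$-orbits one may assume that distinct points of $V_x$ sit on distinct $\mathbb T^d$-orbits. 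Well-definedness of $\tilde s_x$ then reduces to checking that whenever $u$ and $\eta\cdot u$ both lie in $V_x$ one has $s_x(\eta\cdot u)=\eta\cdot s_x(u)$; the set of such $\eta$ in the compact isotropy subtorus of $\overline{\mathbb R\cdot u}\subset X$ is open and closed thanks to $T^L s_x=0$, hence equals the full isotropy. Because $T$ preserves the CR structure and $[T,\ddbar_b]=0$ on $L$-valued forms, the extension $\tilde s_x$ is a rigid CR frame on $D_x$.

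The family $\{D_x\}_{x\in X}$ is an open cover of the compact manifold $X$; I extract a finite subcover $D_1,\ldots,D_N$, which by construction satisfies \eqref{e-gue240716yyd}. The main obstacle I anticipate is precisely the consistency check just described: when the $\mathbb R$-orbit through a point of $V_x$ is closed it may re-enter $V_x$ many times, so different representations $(\eta,u)$ of the same point of $D_x$ must be shown to yield the same value of $\tilde s_x$. The torus factorization of the $\mathbb R$-action is what saves the argument, since it reduces the matter to a connectedness statement on a compact abelian group, which the identity $T^L s_x=0$ settles.
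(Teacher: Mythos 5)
There is no internal proof to compare with here: the paper states this result as known and refers to \cite[Theorem 3.12]{hhl}. Judged on its own terms, your proposal has a genuine gap at precisely the step you single out as the crux, namely the well-definedness of $\tilde s_x$ on the saturation $D_x$. The identity $T^Ls_x=0$ only says that $\eta\mapsto s_x(\eta\cdot u)$ and $\eta\mapsto \eta\cdot s_x(u)$ solve the same linear ODE along an orbit segment that stays inside $V_x$; it gives no relation between the values of $s_x$ at two visits of an orbit that leaves $V_x$ in between. Your open-and-closed argument does not bridge this: the set of parameters where $s_x(\eta\cdot u)=\eta\cdot s_x(u)$ is closed (equality of continuous maps) and is open only within the connected component of $\set{\eta:\eta\cdot u\in V_x}$ containing $\eta=0$; the return components are never reached. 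Passing to the isotropy of $\ol{\mathbb R\cdot u}$ in the torus does not help, because the torus action on $X$ need not lift to $L$ through the flow of $T^L$, and in the isotropy directions there is no differential equation with which to propagate the identity. What you need is triviality of the holonomy of the lifted flow around closed (or recurrent) orbits, and this can fail under exactly the hypotheses you use: take $X=S^1\times M$ with $T=\pr/\pr\theta$, $M$ a compact Riemann surface, $L$ the pullback of a positive holomorphic line bundle on $M$, and the twisted lift $T^L:=T^L_{\mathrm{nat}}+ic$ with $c\in\mathbb R\setminus\mathbb Z$, i.e.\ $T^Ls_M=ic\,s_M$ for pulled-back holomorphic frames $s_M$. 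Then $e^{-ic\theta}s_M$ are local rigid CR frames (note $\ddbar_b\theta=0$), the pulled-back metric is rigid and positive, so all the data you invoke are present; yet any frame on an $\mathbb R$-saturated open set annihilated by this $T^L$ would have to be of the form $e^{-ic\theta}g(y)s_M$, which is not single-valued. So no argument using only ``$T^Ls_x=0$ on a small $V_x$ plus the torus structure of the $\mathbb R$-action'' can produce the saturated frames; the construction must use the stronger rigidity input exploited in \cite[Theorem 3.12]{hhl} — that transition functions between rigid CR frames are rigid CR, hence invariant under the whole torus — and build the invariant cover and the (suitably normalized) bundle lift together, rather than flowing frames for an a priori given lift.

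A secondary, repairable point: you cannot shrink an open set $V_x$ so that distinct points of $V_x$ lie on distinct $\mathbb T^d$-orbits, since the orbits of the transversal $\mathbb R$-action have positive dimension and every open set contains orbit arcs; what one can take is a flow-box $V_x\cong W\times(-\epsilon,\epsilon)$ with $W$ a transversal slice, but this only reorganizes, and does not remove, the return-time holonomy problem above.
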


Theorem~\ref{t-gue240716yyd} tells us that the $\mathbb{R}$-action on $X$ can be lifted to $L$. From now on, for any local CR rigid trivialization $s$ defined on an open set $D$ of $X$, we will always assume that $D=\bigcup_{\eta\in\mathbb R}\{\eta\cdot x;\, x\in D\}$ and $s$ is $\mathbb R$-invariant. 

\subsection{The weighted Fourier-Szeg\H{o} operator}\label{e-gue240717yyd}

We will use the same notations and assumptions as before. 
Let $L^k$ be the $k$-th power of $L$. The Hermitian metric on $L^k$ induced by $h^L$ is denoted by $h^{L^k}$. 
We assume that the Hermitian metric $\langle\,\cdot\,|\,\cdot\,\rangle$ is $\mathbb R$-invariant. 
We denote by $dV_X$ the volume form on $X$ induced by $\langle\,\cdot\,|\,\cdot\,\rangle$.
Let $(\,\cdot\,|\,\cdot\,)_k$ be the $L^2$ inner product on $\mathcal{C}^\infty(X,L^k)$ induced by $h^{L^k}$ and
$dV_X$. Let $L^2(X,L^k)$ be the completion of $\mathcal{C}^\infty(X,L^k)$ with respect
to $(\,\cdot\,|\,\cdot\,)_k$. We extend $(\,\cdot\,|\,\cdot\,)_k$ to $L^2(X,L^k)$.
Consider the operator
\[-iT: \mathcal{C}^\infty(X,L^k)\To\mathcal{C}^\infty(X,L^k)\]
and we extend $-iT$ to $L^2$ space by
\[\begin{split}
&-iT: {\rm Dom\,}(-iT)\subset L^2(X,L^k)\To L^2(X,L^k),\\
&{\rm Dom\,}(-iT)=\set{u\in L^2(X,L^k);\, -iTu\in L^2(X,L^k)}.
\end{split}\]
From~\cite[Theorems 4.1, 4.5]{hhl},  $-iT$ is self-adjoint with respect to $(\,\cdot\,|\,\cdot\,)_k$, ${\rm Spec\,}(-iT)$ is countable
and every element in ${\rm Spec\,}(-iT)$ is an eigenvalue of $-iT$, where ${\rm Spec\,}(-iT)$ denotes the spectrum of $-iT$. 

Let $\ddbar_b:\Omega^{0,q}(X,L^k)\To\Omega^{0,q+1}(X,L^k)$
be the tangential Cauchy-Riemann operator with values in $L^k$.
For every $\alpha\in {\rm Spec\,}(-iT)$, put
\begin{equation}\label{e-gue150806I}
\mathcal{C}^\infty_{\alpha}(X,L^k):=\set{u\in\mathcal{C}^\infty(X,L^k);\, -iTu=\alpha u},
\end{equation}
and
\begin{equation}\label{e-gue150806II}
\mathcal{H}^0_{b,\alpha}(X,L^k):=\set{u\in\mathcal{C}^\infty_{\alpha}(X,L^k);\, \ddbar_bu=0}.
\end{equation}
It is easy to see that
for every $\alpha\in {\rm Spec\,}(-iT)$, we have
\begin{equation}\label{e-gue150806III}
{\rm dim\,}\mathcal{H}^{0}_{b,\alpha}(X,L^k)<\infty.
\end{equation}
For any interval $J\subset\mathbb R$, put
\begin{equation}\label{e-gue150806IV}
\mathcal{H}^0_{b,J}(X,L^k):=\bigoplus_{\alpha\in{\rm Spec\,}(-iT), \alpha\in J}\mathcal{H}^0_{b,\alpha}(X,L^k).
\end{equation}

For every $\alpha\in {\rm Spec\,}(-iT)$, let $L^2_{\alpha}(X,L^k)\subset L^2(X,L^k)$ be the eigenspace of $-iT$ with eigenvalue $\alpha$. It is easy to see that
$L^2_{\alpha}(X,L^k)$ is the completion of $C^\infty_{\alpha}(X,L^k)$ with respect to $(\,\cdot\,|\,\cdot\,)_k$.
Let
\begin{equation}\label{e-gue150807}
Q_{\alpha,k}:L^2(X,L^k)\To L^2_\alpha(X,L^k)
\end{equation}
be the orthogonal projection with respect to $(\,\cdot\,|\,\cdot\,)_k$.
We have the Fourier decomposition
\[L^2(X,L^k)=\bigoplus_{\alpha\in{\rm Spec\,}(-iT)} L^2_\alpha(X,L^k).\]
We first construct a bounded operator on $L^2(X,L^k)$
by putting a weight on the components of the Fourier
decomposition with the help of a cut-off function. Let 
\begin{equation}\label{e-gue160105}
\tau\in\mathcal{C}^\infty_c(I),\ \ \tau\geq0,
\end{equation}
where $I$ is the interval as in \eqref{e-gue240715ycd}.
Let $F_{k,\tau}:L^2(X,L^k)\To L^2(X,L^k)$
be the bounded operator given by
\begin{equation}\label{e-gue150807I}
\begin{split}
F_{k,\tau}:L^2(X,L^k)&\To L^2(X,L^k),\\
u&\mapsto\sum_{\alpha\in{\rm Spec\,}(-iT)}\tau\left(\frac{\alpha}{k}\right)Q_{\alpha,k}(u).
\end{split}
\end{equation}
We consider the partial Szeg\H{o} projector
\begin{equation}\label{e-gue150806V}
\Pi_{k,I}:L^2(X,L^k)\To \mathcal{H}^0_{b,I}(X,L^k)
\end{equation}
which is the orthogonal projection on the space of $\Real$-equivariant CR functions
in $\mathcal{H}^0_{b,I}(X,L^k)$. Finally, we consider the weighted Fourier-Szeg\H{o}
operator
\begin{equation}\label{e-gue150807II}
P_{k,\tau^2}:=F_{k,\tau}\circ\Pi_{k,I}\circ F_{k,\tau}:L^2(X,L^k)
\To \mathcal{H}^0_{b,I}(X,L^k).
\end{equation}
The Schwartz kernel of $P_{k,\tau^2}$ with respect to $dV_X$ is the smooth section
$(x,y)\mapsto P_{k,\tau^2}(x,y)\in L^k_x\otimes(L^k_y)^*$ satisfying
\begin{equation}\label{sk}
(P_{k,\tau^2}u)(x)=\int_X P_{k,\tau^2}(x,y)u(y)\,dV_X(y)\,,\:\:u\in L^2(X,L^k).
\end{equation} 

We pause and introduce some notations. Let 
 $A_{k}: \mathcal{C}^\infty_c(X,L^k)\To\mathcal{C}^\infty(X,L^k)$ be a continuous operator with distribution 
 kernel $A_k(x,y)\in\mathcal{D}'(X\times X,L^k\boxtimes(L^k)^*)$. 
Let $s_1$, $s_2$ be local trivializing CR rigid sections of $L$ defined on $\mathbb R$-invariant open sets $D_1\subset X$, $D_2\subset X$, respectively, $|s_j(x)|^2_{h^L}=e^{-2\Phi_j(x)}$, $\Phi_j\in\mathcal{C}^\infty(D_j)$,$j=1,2$.  The localization of $A_{k}$ with respect to $s_1$ and $s_2$ are given by
\begin{equation*}
\begin{split}
&A_{k, s_1,s_2}: \mathcal{C}^\infty_c(D_1)\rightarrow\mathcal{C}^\infty(D_2),\\
&A_{k, s_1,s_2}(u):=e^{-k\Phi_2}s^{-k}_2A_{k}(s^{k}_1e^{k\Phi_1}u), \forall u\in\mathcal{C}^\infty_c(D_1).
\end{split}
\end{equation*}
When $s=s_1=s_2$, $D_1=D_2$, we write $A_{k,s}:=A_{k,s_1,s_2}$.
We write $A_k=O(k^{-\infty})$ on $X$ or $A_k(x,y)=O(k^{-\infty})$ on $X\times X$ if for 
any local trivializing CR rigid sections $s_1$, $s_2$ of $L$ defined on $\mathbb R$-invariant open sets $D_1\subset X$, $D_2\subset X$, respectively, we have 
\[A_{k, s_1,s_2}=O(k^{-\infty})\ \ \mbox{on $D_1\times D_2$}.\]

Let $s$ be a local trivializing CR rigid section of $L$ defined on a $\mathbb R$-invariant open set $D$, $|s(x)|^2_{h^L}=e^{-2\Phi(x)}$, $x\in D$. 
As before, let $P_{k,\tau^2,s}: \mathcal{C}^\infty_c(D)\To\mathcal{C}^\infty(D)$ be the localization of $P_{k,\tau^2}$ with respect to $s$. Let $P_{k,\tau^2,s}(x,y)\in\mathcal{C}^\infty(D\times D)$
be the Schwartz kernel of $P_{k,\tau^2,s}$ with respect to $dV_X$.
We have the following~\cite[Theorem 1.1]{hhl}

\begin{thm}\label{t-gue240717yyd}
With the notations and assumptions above,
consider a point $p\in X$ and a coordinates neighborhood
$(D,x=(x_1,\ldots,x_{2n+1}))$ centered at $p=0$ with $T=-\frac{\pr}{\pr x_{2n+1}}$.
Let $s$ be a local rigid CR trivializing section of $L$ on $D$ and set $\abs{s}^2_{h}=e^{-2\Phi}$. With the notations used above, 
\begin{equation}\label{sk2}
P_{k,\tau^2,s}(x,y)=\int_{\mathbb R} e^{ik\varphi(x,y,t)}a(x,y,t,k)dt+O(k^{-\infty})\:\:
\text{on $D\times D$},
\end{equation}
where $\varphi\in\mathcal{C}^\infty( D\times D\times I)$ is a phase function such that
for some constant $c>0$ we have
\begin{equation}\label{e-gue150807b}
\begin{split}
&d_x\varphi(x,y,t)|_{x=y}=-2{\rm Im\,}\ddbar_b\Phi(x)+t\omega_0(x),\\
&d_y\varphi(x,y,t)|_{x=y}=2{\rm Im\,}\ddbar_b\Phi(x)-t\omega_0(x),\\
&{\rm Im\,}\varphi(x,y,t)\geq c|z-w|^2,\\
&(x,y,t)\in D\times D\times I, x=(z, x_{2n+1}), y=(w, y_{2n+1}),\\
&\mbox{${\rm Im\,}\varphi(x,y,t)+\abs{\frac{\pr\varphi}{\pr t}(x,y,t)}^2\geq c\abs{x-y}^2$,
$(x,y,t)\in D\times D\times I$},\\
&\mbox{$\varphi(x,y,t)=0$ and $\frac{\pr\varphi}{\pr t}(x,y,t)=0$ if and only if $x=y$},
\end{split}
\end{equation}
and $a(x,y,t,k)\in S^{n+1}_{{\rm loc\,}}(1;D\times D\times I)
\cap\mathcal{C}^\infty_c(D\times D\times I)$ is a symbol with expansion
\begin{equation}\label{e-gue150807bI}
\begin{split}
&a(x,y,t,k)\sim\sum^\infty_{j=0}a_j(x,y,t)k^{n+1-j}\text{ in }S^{n+1}_{{\rm loc\,}}
(1;D\times D\times I), \\
\end{split}\end{equation}
and for
$x\in D_0$ and $t\in I$ we have
\begin{equation}\label{e-gue150807a}
a_0(x,x,t)=(2\pi)^{-n-1}\abs{\det\bigr(R^L_x-2t\mathcal{L}_x\bigr)}\tau^2(t),
\end{equation}
where $\omega_0\in\mathcal{C}^\infty(X,T^*X)$ is the global real $1$-form of unit length orthogonal
to $T^{*1,0}X\oplus T^{*0,1}X$, see \eqref{e-gue240718yyd}, $\abs{\det{\bigr(R^L_x-2t\mathcal{L}_x\bigr)}}=\abs{\lambda_1(x,t)\cdots\lambda_{n}(x,t)}$,
where $\lambda_j(x,t)$, $j=1,\ldots,n$, are the eigenvalues of the Hermitian quadratic
form $R^L_x-2t\mathcal{L}_x$ with respect to $\langle\,\cdot\,|\,\cdot\,\rangle$,
$R^L_x$ and $\mathcal{L}_x$ denote the curvature two form of $L$ and the Levi form
of $X$ respectively (see Definition~\ref{d-gue150808g} and \eqref{e-gue240718yydI}).
\end{thm}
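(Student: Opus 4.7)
The plan is to reduce everything to a local rigid trivialization, construct a semiclassical oscillatory-integral parametrix for the partial Szegő projection $\Pi_{k,I}$, and then incorporate the Fourier multipliers $F_{k,\tau}$ by exploiting their commutation with $\Pi_{k,I}$.

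First, I would choose coordinates $(x_1,\ldots,x_{2n+1})$ centered at $p$ with $T=-\partial/\partial x_{2n+1}$, together with a rigid CR trivializing section $s$ of $L$ on $D$. Because $h^L$ is rigid and $L$ is rigid CR, the local weight $\Phi$ is independent of $x_{2n+1}$, and $-iT$ acts on local representatives as $i\partial/\partial x_{2n+1}$; functional calculus realizes $F_{k,\tau}$ as the Fourier multiplier in $x_{2n+1}$ with symbol $\tau(\alpha/k)$. The commutation \eqref{e-gue150508d} then gives $[F_{k,\tau},\Pi_{k,I}]=0$, hence $P_{k,\tau^2}=\tau_k^2(-iT)\,\Pi_{k,I}$ where $\tau_k(\alpha)=\tau(\alpha/k)$. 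After rescaling $\alpha=kt$, the Fourier inversion formula expresses $\tau_k^2(-iT)$ as an integral in $t\in\mathrm{supp}\,\tau\subset I$ with Jacobian $k$, which is the source of the $t$-integration in \eqref{sk2} and of the top order $k^{n+1}$.

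Second, I would construct a parametrix for $\Pi_{k,I}$. By Assumption~\ref{a-gue240715yyd}, for each $t\in I$ the Hermitian form $R^L_x-2t\mathcal{L}_x$ is positive definite on $T^{1,0}_xX$. On the spectral slice $-iT/k\in I$, the twisted tangential Cauchy-Riemann operator behaves like $\overline\partial$ twisted by a positive line bundle over a Heisenberg model, so the associated Kohn Laplacian has a spectral gap and its kernel is exactly the range of $\Pi_{k,I}$. Adapting the Boutet de Monvel--Sjöstrand method to the line-bundle setting, I would produce in the rigid coordinates
\[\Pi_{k,I,s}(x,y)\equiv \int_I e^{ik\psi(x,y,t)}\,b(x,y,t,k)\,dt \quad \mathrm{mod}\ O(k^{-\infty}),\]
where $\psi$ solves the eikonal equations $\overline\partial_{b,x}\psi\equiv 0$ and $\overline\partial_{b,y}\overline\psi\equiv 0$ in the sense of Melin--Sjöstrand almost analytic extensions, and $b$ is obtained by solving the corresponding transport equations along the bicharacteristics.

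Third, I would read off the phase properties and the leading symbol. The construction forces $d_x\psi|_{x=y}=-2\,\mathrm{Im}\,\overline\partial_b\Phi+t\omega_0$, where the term $t\omega_0$ encodes the eigenvalue $kt$ of $-iT$ on the corresponding slice; the positivity $\mathrm{Im}\,\psi\geq c|z-w|^2$ follows from the Melin--Sjöstrand lower bound for a complex phase whose transverse Hessian is the positive form $R^L_x-2t\mathcal{L}_x$; the stationary condition $\partial_t\psi=0 \Leftrightarrow x=y$ reflects the transversality of $T$ to $HX$. Composing with $\tau_k^2(-iT)$ merely multiplies the amplitude by $\tau^2(t)$, producing \eqref{sk2} with $\varphi=\psi$ up to lower order terms and
\[a_0(x,x,t)=(2\pi)^{-n-1}\bigl|\det(R^L_x-2t\mathcal{L}_x)\bigr|\,\tau^2(t),\]
which is the standard Heisenberg-model value of the leading Szegő coefficient. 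The main obstacle is the construction of $\psi$ and the proof that the parametrix equals $\Pi_{k,I}$ modulo $O(k^{-\infty})$: the former requires solving the complex eikonal equation parametrized by $t\in I$ with the correct imaginary positivity, while the latter requires a functional-analytic argument based on the spectral gap of the twisted Kohn Laplacian on the window $kI$ together with a Hodge-type decomposition to discard the error; both are technically delicate and constitute the heart of~\cite{hhl}.
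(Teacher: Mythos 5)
You should first note that the paper does not actually prove Theorem~\ref{t-gue240717yyd}: it is imported verbatim as \cite[Theorem 1.1]{hhl}, so there is no internal argument to compare against. Measured against the proof in \cite{hhl}, your outline does identify the right strategy — describe the partial Szeg\H{o} projector for the spectral window as a semiclassical Fourier integral operator with complex phase in the spirit of Boutet de Monvel--Sj\"ostrand/Melin--Sj\"ostrand, use the rigidity of $s$, $h^L$ and the commutation $T\ddbar_b=\ddbar_bT$ to reduce to the model where $-iT$ acts as a multiplier along the Reeb direction, and then absorb $F_{k,\tau}$ into the amplitude to produce the factor $\tau^2(t)$ and the phase normalization \eqref{e-gue150807b}.

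However, as a standalone proof your proposal has genuine gaps, and they are exactly the content of the theorem. First, the two steps you explicitly defer — solving the complex eikonal and transport equations to get $\psi$ with the stated positivity and diagonal behaviour, and showing that the resulting parametrix agrees with the actual spectral projector up to $O(k^{-\infty})$ — are not routine; the latter requires the closed-range/spectral-gap analysis of the Kohn Laplacian restricted to the Fourier modes $\alpha$ with $\alpha/k$ in $I$ (this is where Assumption~\ref{a-gue240715yyd}, i.e.\ positivity of $R^L_x-2t\mathcal{L}_x$ for all $t\in I$, enters), and your statement that ``the kernel of the Kohn Laplacian is exactly the range of $\Pi_{k,I}$'' is false without that restriction, since the full kernel is $\mathcal{H}^0_b(X,L^k)$ and not only its part in the window. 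Second, the claim that composing with $\tau^2_k(-iT)$ ``merely multiplies the amplitude by $\tau^2(t)$'' needs an argument (stationary phase or integration by parts in the Reeb/Fourier variable, using that the phase is $t(x_{2n+1}-y_{2n+1})$ plus a part independent of $x_{2n+1}$), and with the conventions of this paper ($\omega_0(T)\equiv-1$, $T=-\frac{\pr}{\pr x_{2n+1}}$) there is a sign bookkeeping issue you must check so that the weight really lands on $\tau^2(t)$ for $t\in I$ and not $\tau^2(-t)$. Finally, the leading coefficient \eqref{e-gue150807a} is not ``the standard Heisenberg-model value'' by fiat: it comes out of evaluating the solution of the transport equations on the diagonal, which must be computed. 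So the proposal is a correct road map of \cite{hhl} but not a proof; its central analytic steps are named rather than carried out.
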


Now, we shall introduce local coordinates and give a local expression for the phase function. In Section $4.4$ of \cite{hsiao} the author determined the tangential Hessian of the phase function. We recall \cite[Theorem 3.13]{hml}, which is an improvement of the result in \cite{hsiao} for the case of CR manifolds with transversal CR $\mathbb R$-action. 

\begin{thm} \label{thm:hml} Fix $(p,p,t_0) \in D\times D \times I$, and let $\overline{W}_{1,t_0},\,\dots,\overline{W}_{n,t_0}$ be an orthonormal rigid frame of $T^{1,0}_xX$ varying smoothly with $x$ in a neighborhood of $p$, for which the Hermitian quadratic form $R^L_x - 2t_0\mathcal{L}_x$ is diagonal at $p$, that is,
	\[(R^L_p-2\,t_0\mathcal{L}_p)(\overline{W}_{j,t_0}(p),\,W_{k,t_0}(p))=\lambda_j(t_0)\,\delta_{j,k}\,, \]
for $j,\,k=1,\dots,\,n$. Let $s$ be a local rigid CR frame of $L$ defined on $D$, $|s|^2_{h^L}=e^{-2\Phi}$, $\Phi\in\mathcal{C}^\infty(D)$. Let $x=(x_1,\ldots,x_{2n+1})$ be local coordinates defined in some small neighborhood of $p$ with $x(p)=0$ such that 
	\begin{equation}\label{e-gue240718yydII}
 \begin{split}
 &\omega_0(p)=\mathrm{d}x_{2n+1}\,, \qquad T=-\frac{\partial}{\partial x_{2n+1}},\\
	&\overline{W}_{j,\,t_0}(p)=\frac{\partial}{\partial z_j}+i\sum_{t=1}^{n}\tau_{j,\,t}\overline{z}_t\frac{\partial}{\partial x_{2n+1}} +O(\lvert z\rvert^2)\quad \text{for } j=1,\dots,n\,,\\
	&\left\langle \frac{\partial}{\partial x_j}(p)\bigg\vert \frac{\partial}{\partial x_\ell}(p) \right\rangle = 2\,\delta_{j,\ell}\quad \text{for } j,\ell=1,\dots,2n\,,\\
 &z_j=x_{j}+ix_{d+j},\ \j=1,\ldots,d,\\
 &z_j=x_{2j-1}+ix_{2j},\ \ j=d+1,\ldots,n,
 \end{split}\end{equation}
	and
	\begin{equation}\label{e-gue240718yyda}
	    \Phi(x)=\frac{1}{2}\sum_{l,\,t=1}^{n}\mu_{t,l}z_t\overline{z}_l+\sum_{l,\,t=1}^{n}(a_{l,t}z_lz_t+\overline{a}_{l,t}\overline{z}_l\overline{z}_t)+O(\lvert z\rvert^3)\,, \end{equation}
	where $\tau_{t,l},\,\mu_{t,l},\,a_{t,l}\in \mathbb{C}$, $\mu_{t,l}=\overline{\mu}_{l,t}$, $l,t=1,\dots,n$. Then, there exists a neighborhood of $(p,\,p)$ such that 
	\begin{align} \label{eq:phase}
		\varphi(x,y,t_0)=&t_0(x_{2n+1}-y_{2n+1})-\frac{i}{2}\sum_{j,l=1}^{n}(a_{l,j}+a_{j,l})(z_jz_l-w_jw_l) \\
		&+\frac{i}{2}\sum_{j,l=1}^{n}(\overline{a_{l,j}}+\overline{a_{j,l}})(\overline{z}_j\overline{z}_l-\overline{w}_j\overline{w}_l) +\frac{i\,t_0}{2}\sum_{j,l=1}^{n}(\overline{\tau_{l,j}}-\tau_{j,l})({z}_j\overline{z}_l-{w}_j\overline{w}_l) \notag \\
		&-\frac{i}{2}\sum_{j=1}^{n}\lambda_j(t_0)({z}_j\overline{w}_j-\overline{z}_j{w}_j) \notag\\
		&+\frac{i}{2}\sum_{j=1}^{n}\lambda_j(t_0)\lvert {z}_j-{w}_j\rvert^2+O(\lvert (z,w)\rvert^3). \notag
	\end{align}
\end{thm}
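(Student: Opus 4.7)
The plan is to compute, term by term, the Taylor expansion of $\varphi(x,y,t_0)$ around $(p,p)$ up to second order in $(z-w,\overline z-\overline w, x_{2n+1}-y_{2n+1})$, using the defining properties \eqref{e-gue150807b} of the phase function in the normal-form coordinates \eqref{e-gue240718yydII}. Since $\varphi$ is determined only modulo equivalence of phase functions, I have the freedom to reabsorb terms vanishing to third order on the diagonal into a modified amplitude, which is precisely what the $O(\lvert(z,w)\rvert^3)$ remainder in \eqref{eq:phase} records.

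First I would fix the linear part. The relation $\varphi(x,x,t_0)=0$ forces $\varphi$ to depend only on $x-y$ at first order on the diagonal; then the identities $\mathrm{d}_x\varphi\vert_{x=y}=-2\,\mathrm{Im}\,\overline\partial_b\Phi(x)+t_0\omega_0(x)$ and $\mathrm{d}_y\varphi\vert_{x=y}=2\,\mathrm{Im}\,\overline\partial_b\Phi(x)-t_0\omega_0(x)$, together with $\omega_0(p)=\mathrm{d}x_{2n+1}$, immediately produce the contribution $t_0(x_{2n+1}-y_{2n+1})$. The holomorphic pieces $(a_{l,j}+a_{j,l})(z_jz_l-w_jw_l)$ and their conjugates, as well as the $t_0(\overline\tau_{l,j}-\tau_{j,l})(z_j\overline z_l-w_j\overline w_l)$ mixed term, then arise by computing $\overline\partial_b\Phi$ in the frame $\overline W_{j,t_0}$ and reading off the expansions \eqref{e-gue240718yydII} and \eqref{e-gue240718yyda}; the reality of $\mu_{t,l}=\overline\mu_{l,t}$ ensures that the Hermitian part of $\Phi$ contributes only to $\mathrm{Im}\,\overline\partial_b\Phi$ in a way already accounted for by the linear boundary condition.

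Next I would determine the mixed complex Hessian of $\varphi$ in the variables $(z,\overline w)$. The Hermitian piece $\frac{i}{2}\sum_j\lambda_j(t_0)\lvert z_j-w_j\rvert^2$ is the content that encodes the positivity ${\rm Im}\,\varphi\geq c\lvert z-w\rvert^2$ from \eqref{e-gue150807b}. The identification of its eigenvalues with $\lambda_j(t_0)$ follows by matching the leading amplitude $(2\pi)^{-n-1}\lvert\det(R^L_x-2t\mathcal L_x)\rvert\tau^2(t)$ in \eqref{e-gue150807a}: the Gaussian reduction of the oscillatory integral \eqref{sk2} produces a Jacobian equal to $\prod_j\lambda_j(t_0)^{-1}$ on the diagonal, which pins down the Hermitian Hessian because the frame was chosen to diagonalize $R^L_p-2t_0\mathcal L_p$. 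The antisymmetric imaginary term $-\frac{i}{2}\sum_j\lambda_j(t_0)(z_j\overline w_j-\overline z_j w_j)$ is then forced by the symmetry $\varphi(x,y,t_0)\equiv -\overline{\varphi(y,x,t_0)}$ modulo equivalence, which follows from the self-adjointness of $P_{k,\tau^2}$ (since $F_{k,\tau}$ is a real functional calculus of the self-adjoint operator $-iT$ and commutes with the orthogonal projector $\Pi_{k,I}$).

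The main obstacle will be showing that, modulo the $O(\lvert(z,w)\rvert^3)$ remainder and modulo equivalence of phase functions, the coefficients computed above uniquely determine $\varphi$ up to the required order. This requires verifying that the available gauge freedom in the choice of a phase function is exhausted by cubic corrections in $(z,w)$ — a uniqueness statement for complex phase functions with almost-analytic boundary data on the diagonal. Once this uniqueness is established, reading off each Taylor coefficient via the boundary conditions on $\mathrm{d}_x\varphi$, $\mathrm{d}_y\varphi$ and the mixed Hessian, in the explicit frame prescribed by \eqref{e-gue240718yydII}, yields exactly the expression \eqref{eq:phase}.
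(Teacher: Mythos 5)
There is a genuine gap, and it sits exactly where you flag your ``main obstacle''. First, note that the paper does not prove this statement at all: it is quoted verbatim from \cite[Theorem 3.13]{hml}, where it is established by directly computing the tangential Hessian of the phase (building on Section 4.4 of \cite{hsiao}), using the fact that $\ddbar_{b,x}\varphi$ and $\pr_{b,y}\varphi$ vanish to second order on the diagonal (because the Fourier--Szeg\H{o} kernel is annihilated by $\ddbar_b$ in $x$ and by its conjugate in $y$), together with the rigidity of the coordinates \eqref{e-gue240718yydII}, the expansion \eqref{e-gue240718yyda} of $\Phi$, and the diagonal gradient conditions in \eqref{e-gue150807b}. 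Your proposal replaces this mechanism by three substitutes, none of which suffices. (i) The uniqueness claim that ``the gauge freedom in the choice of phase is exhausted by cubic corrections'' is not true as stated: equivalent phase functions (in the sense used for these complex FIOs) need not agree to second order in all directions off the critical set; only certain tangential data are invariants, and identifying precisely which second-order data are determined is the hard content of \cite[Section 4.4]{hsiao} and \cite[Theorem 3.13]{hml} --- it cannot simply be assumed. (ii) Matching the leading amplitude \eqref{e-gue150807a} against a Gaussian Jacobian only constrains a determinant-type quantity, not the individual eigenvalues nor the fact that the mixed Hessian is diagonal in the frame $\ol W_{j,t_0}$; moreover the amplitude changes when one moves within the equivalence class of phases, so using it to pin down the Hessian is circular unless the uniqueness issue in (i) is already settled. (iii) Self-adjointness gives $\varphi(x,y,t)\equiv-\ol{\varphi(y,x,t)}$ only modulo equivalence, and in any case this symmetry is satisfied by \emph{any} multiple of the antisymmetric term $-\frac{i}{2}\sum_j\lambda_j(z_j\ol w_j-\ol z_jw_j)$, including zero; so it cannot force that term or its coefficient. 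The separation of the Hermitian piece $\frac{i}{2}\sum_j\lambda_j|z_j-w_j|^2$ from the antisymmetric piece is precisely what the approximate $\ddbar_b$-equations for the phase achieve, and your plan never invokes them.

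To repair the argument you would essentially have to reproduce the cited proof: impose that $e^{ik\varphi}$ is approximately CR in $x$ and conjugate-CR in $y$ to second order at the diagonal, use the $\mathbb R$-invariance to write $\varphi=t(x_{2n+1}-y_{2n+1})+\hat\varphi(z,w,t)$, and then solve the resulting linear relations among the second-order Taylor coefficients of $\hat\varphi$ using \eqref{e-gue240718yydII}, \eqref{e-gue240718yyda} and the diagonal conditions \eqref{e-gue150807b}; the positivity ${\rm Im\,}\varphi\geq c|z-w|^2$ then enters only to fix signs, not the coefficients themselves.
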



\section{CR manifolds with group action}\label{s-gue240718yyd}

From now on, we assume that $X$ admits a $d$-dimensional compact Lie group action $G$ with Lie algebra $\mathfrak{g}$. As before, let us denote by $\underline{\mathfrak{g}}$ the space of vector field on $X$ induced by the Lie algebra $\mathfrak{g}$ of $G$. Furthermore for every $\xi \in \mathfrak{g}$, we write $\xi_X$ to denote the infinitesimal vector field on $X$ induced by $\xi$. Recall that we work with Assumption~\ref{ass:2}. 

As Definition~\ref{Def:RigidCVB}, we can define $G$-equivariant rigid CR line bundle.

\begin{defn}\label{Def:RigidCVBz}
	A rigid CR line bundle $L\To X$ is called $G$-equivariant  rigid CR if the action $G$ can be lifted to $L$, for every $\xi_X\in\underline{\mathfrak{g}}$, $\xi_X$ can be lifted to $L$ and for every point \(p\in X\) there exists an open neighborhood \(U\) around \(p\) and a CR frame $s$ of \(L|_U\) with \(T(s)=0\) and $\xi_Xs=0$, for every $\xi_X\in\underline{\mathfrak{g}}$.
\end{defn}

From now on, we assume that $L$ is a $G$-equivariant rigid CR line bundle. We can repeat the proof of~\cite[Theorem 3.12]{hhl} with minor changes and deduce 

\begin{thm}\label{t-gue240719yyd}
With the assumptions and notations above, we can find local CR rigid $G$-invariant trivializing section $s_j$ defined on an open subset $D_j$ of $X$, $j=1,\ldots,N$, $N\in\mathbb N$, such that $X=\cup^N_{j=1}D_j$ and $D_j=\cup_{(g,\eta)\in G\times\mathbb R}\{g\cdot\eta\cdot x;\, x\in D_j\}$, for every $j=1,\ldots,N$. 
\end{thm}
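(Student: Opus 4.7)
The proof of Theorem~\ref{t-gue240719yyd} will follow the blueprint of~\cite[Theorem 3.12]{hhl} (whose $\mathbb R$-action-only version is recalled as Theorem~\ref{t-gue240716yyd}), with the compact Lie group action incorporated via the commutativity of $G$ and the $\mathbb R$-flow $\eta$ (Assumption~\ref{ass:2}) together with the joint infinitesimal invariance of local frames built into Definition~\ref{Def:RigidCVBz}. The plan is to produce local seed frames that are simultaneously annihilated by $T$ and by every $\xi_X$, propagate each seed along the full $G\times\mathbb R$-orbit of its domain using the lifted actions on $L$, and then invoke compactness of $X$.

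First I would fix $p\in X$ and apply Definition~\ref{Def:RigidCVBz} to obtain an open neighborhood $U_p$ of $p$ and a CR frame $s_p$ of $L|_{U_p}$ with $Ts_p=0$ and $\xi_X s_p=0$ for every $\xi\in\mathfrak{g}$. Shrinking $U_p$, I may assume it is a small slice transverse to the $G\times\mathbb R$-orbit through $p$; existence of such a slice follows from the standard tube theorem for the compact action $G$ combined with local transversality of $T$ (only a bounded interval of $\eta$ is needed). Setting
\[
D_p:=\bigcup_{(g,\eta)\in G\times\mathbb R}\set{g\cdot\eta\cdot x;\, x\in U_p},
\]
I would then define
\[
\tilde s_p(g\cdot\eta\cdot x):=g_\ast\eta_\ast s_p(x),\qquad x\in U_p,\ g\in G,\ \eta\in\mathbb R,
\]
where $g_\ast$ and $\eta_\ast$ denote the lifted actions on $L$ provided by the $G$-equivariant rigid CR structure. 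Because $G$ commutes with the $\mathbb R$-action, the order of $g$ and $\eta$ is immaterial.

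The core technical point is to check that $\tilde s_p$ is well defined. If $g_1\cdot\eta_1\cdot x_1=g_2\cdot\eta_2\cdot x_2$ for $x_1,x_2\in U_p$, then the pair $(g_2^{-1}g_1,\eta_1-\eta_2)\in G\times\mathbb R$ maps $x_1$ to $x_2$ inside $U_p$; the slice property forces this to be an isotropy element, while the infinitesimal identities $Ts_p=0$ and $\xi_X s_p=0$ say that the isotropy Lie algebra acts trivially on the fiber of $L$ along $U_p$. Exponentiating, the connected component of the isotropy lifts to the identity on $L|_{U_p}$, and a final shrinking of $U_p$ handles any discrete components. Once well-definedness is in hand, smoothness of $\tilde s_p$ follows from smoothness of the actions and their lifts, the CR property is preserved since $G$ and the $\mathbb R$-flow preserve the CR structure, and $T\tilde s_p=0$ and $\xi_X\tilde s_p=0$ for every $\xi\in\mathfrak{g}$ hold by construction. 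Since $\set{D_p}_{p\in X}$ is an open cover of $X$ by $G\times\mathbb R$-invariant sets, compactness yields a finite subcover $D_{p_1},\ldots,D_{p_N}$ and the sections $s_j:=\tilde s_{p_j}$ meet all the stated requirements.

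The main obstacle is precisely the well-definedness argument sketched above: away from a free orbit one must carefully combine the slice theorem for the $G\times\mathbb R$-action with the vanishing $Ts_p=\xi_X s_p=0$ to rule out phase ambiguities coming from non-trivial isotropy. Everything else, namely existence of seed frames, smoothness of the equivariant extension, preservation of the CR structure and extraction of a finite subcover, is essentially bookkeeping once the equivariant extension is known to be consistent.
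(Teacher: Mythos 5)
Your overall strategy (seed frames from Definition~\ref{Def:RigidCVBz}, equivariant extension along $G\times\mathbb R$-orbits, compactness) is the natural one, and the paper itself only says that the proof of~\cite[Theorem 3.12]{hhl} can be repeated with minor changes; but your key step, the well-definedness of $\tilde s_p$, has a genuine gap. The combined $G\times\mathbb R$-action is not proper in the $\mathbb R$-direction: for a torsion free CR manifold the Reeb flow generates a (generally non-closed) one-parameter subgroup of a compact torus acting on $X$ (this is exactly \cite[Corollary 3.8]{hhl}, quoted just before Theorem~\ref{t-gue240716yyd}), so $\mathbb R$-orbits are typically dense in positive-dimensional subtori. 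Consequently there is no slice theorem for the $G\times\mathbb R$-action, and an identity $g_1\cdot\eta_1\cdot x_1=g_2\cdot\eta_2\cdot x_2$ with $x_1,x_2\in U_p$ does not force $(g_2^{-1}g_1,\eta_1-\eta_2)$ to be an isotropy element: by recurrence and density the orbit of $x_1$ re-enters the transversal $U_p$ at points $x_2\neq x_1$ for arbitrarily large $\abs{\eta_1-\eta_2}$. For such returns the identities $T^Ls_p=0$ and $\xi_Xs_p=0$ give you nothing, because they only yield invariance of $s_p$ under group elements whose connecting orbit segment stays inside $U_p$; they cannot be ``exponentiated'' across an excursion that leaves $U_p$. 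This is precisely the difficulty that the proof of \cite[Theorem 3.12]{hhl} handles by passing to the compact torus closure of the $\mathbb R$-action and lifting that torus action to $L$, and your argument never uses this torus structure.

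A secondary problem: even for genuine isotropy elements, the phrase ``a final shrinking of $U_p$ handles any discrete components'' does not work. Shrinking an invariant neighborhood changes neither the isotropy group at $p$ nor its lifted action on the fibre $L_p$; a finite isotropy subgroup could a priori act by a nontrivial character on $L_p$, and excluding this requires an argument coming from the rigid, $G$-equivariant structure (as in \cite{hhl}), not a shrinking. So as written the proposal does not establish the theorem; the repair is to follow the route the paper indicates: use that the $\mathbb R$-action sits inside a CR torus action, lift this compact group action to $L$, and perform the orbit-wise extension with respect to $G$ together with the torus, for which properness and the slice theorem are actually available.
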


From now on, for any local CR rigid trivializing section $s$ defined on an open set $D$ of $X$, by Theorem~\ref{t-gue240719yyd}, without loss of generality, we will always assume that $D$ and $s$ are $G\times\mathbb R$-invariant. 

From now on, we assume that the Hermitian metrices $\langle\,\cdot\,|\,\cdot\,\rangle$ and $h^L$ are $G\times\mathbb R$-invariant. We now introduce moment map associated to $R^L$ and $\omega_0$.

\begin{defn}\label{d-gue240719yyd}
Let $s$ be a CR rigid $G$-invariant trivializing section of $L$ defined on an open subset $D\subset X$, $|s|^2_{h^L}=e^{-2\Phi}$, $\Phi\in\mathcal{C}^\infty(D)$. Let 
\[\gamma_{\Phi}:=4{\rm Im\,}\ddbar_b\Phi=(-2i)(\ddbar_b\Phi-\pr_b\Phi).\]
The moment map on $D$ associated to the local weight $\Phi$ is the map $\gamma_{\Phi}: D\To\mathfrak{g}^*$ such that for all $x\in D$ and $\xi\in\mathfrak{g}$, we have 
\begin{equation}\label{e-gue240719ycd}
\langle\,\gamma_\Phi(x)\,,\,\xi\,\rangle=\gamma_{\Phi}(\xi_X(x)),
\end{equation}
where $\xi_X$ is the vector field on $X$ induced by $\xi$. 
\end{defn}

\begin{lem}\label{l-gue240719ycd}
Definition~\ref{d-gue240719yyd} is global defined. More precisely, the moment map $\gamma_{\Phi}$ given by \eqref{e-gue240719ycd} is independent of the choices of $\Phi$.
\end{lem}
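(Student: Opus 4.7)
The plan is to show that on any nonempty overlap $D_1\cap D_2$ of domains of two $G\times\mathbb R$-invariant rigid CR trivializing sections $s_1,s_2$ of $L$, with weights $\abs{s_j}^2_{h^L}=e^{-2\Phi_j}$, $j=1,2$, the two candidate moment maps $\gamma_{\Phi_1}$ and $\gamma_{\Phi_2}$ agree when evaluated on every $\xi_X$, $\xi\in\mathfrak g$. By Lemma~\ref{l-gue240715yyd} and Theorem~\ref{t-gue240719yyd}, the transition function $g$ defined by $s_2=g\,s_1$ on $D_1\cap D_2$ is a nowhere-zero rigid CR function, and because both $s_j$ are $G$-invariant, $g$ is also $G$-invariant: $Tg=0$, $\ddbar_b g=0$ and $\xi_X g=0$ for every $\xi\in\mathfrak g$. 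Taking conjugates gives $T\overline g=0$, $\pr_b\overline g=0$ and $\xi_X\overline g=0$.

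Next I would differentiate the relation $\Phi_2-\Phi_1=-\tfrac12\log\abs{g}^2$. Since $\ddbar_b g=0$, the contribution of $g$ to $\ddbar_b\log\abs g^2=\ddbar_b(\log g+\log\overline g)$ drops out (the quotient $\ddbar_b\overline g/\overline g$ is well defined irrespective of any choice of branch), and similarly for $\pr_b$. Hence
\[
\ddbar_b(\Phi_2-\Phi_1)=-\tfrac12\frac{\ddbar_b\overline g}{\overline g},\qquad \pr_b(\Phi_2-\Phi_1)=-\tfrac12\frac{\pr_b g}{g},
\]
and consequently
\[
\gamma_{\Phi_2}-\gamma_{\Phi_1}=(-2i)\bigl(\ddbar_b-\pr_b\bigr)(\Phi_2-\Phi_1)
= i\,\frac{\ddbar_b\overline g}{\overline g}-i\,\frac{\pr_b g}{g}.
\]

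It remains to evaluate the right-hand side on $\xi_X$. Decomposing $\xi_X=\xi_X^{1,0}+\xi_X^{0,1}+c\,T$ with $\xi_X^{1,0}\in T^{1,0}X$, $\xi_X^{0,1}=\overline{\xi_X^{1,0}}\in T^{0,1}X$ and $c\in\mathcal{C}^\infty(D_1\cap D_2,\mathbb R)$, the identities $\xi_X g=0$, $Tg=0$ and $\ddbar_b g=0$ force $\xi_X^{1,0}(g)=0$; conjugating yields $\xi_X^{0,1}(\overline g)=0$. Since $\pr_b g/g$ pairs only with the $(1,0)$-part of $\xi_X$ and $\ddbar_b\overline g/\overline g$ only with the $(0,1)$-part, both summands vanish when applied to $\xi_X$. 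Therefore $\gamma_{\Phi_2}(\xi_X)=\gamma_{\Phi_1}(\xi_X)$ on $D_1\cap D_2$, which is exactly the asserted independence.

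The only mildly subtle point is the bookkeeping between the $(1,0)$/$(0,1)$ decomposition of $\xi_X$ and the rigid CR plus $G$-invariance hypotheses on $g$; once one observes that rigidity and $G$-invariance together kill precisely the two directional derivatives of $g$ and $\overline g$ that appear, the argument is immediate and requires no global choice of logarithm.
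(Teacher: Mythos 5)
Your argument is correct and follows essentially the same route as the paper's proof: compare the two weights through the rigid CR, $G$-invariant transition function, compute $\gamma_{\Phi_2}-\gamma_{\Phi_1}=i\bigl(\ddbar_b\ol g/\ol g-\pr_b g/g\bigr)$, and use $G$-invariance to kill the pairing with $\xi_X$. The only difference is cosmetic: you spell out, via the decomposition $\xi_X=\xi_X^{1,0}+\xi_X^{0,1}+cT$ together with $Tg=0$ and $\ddbar_b g=0$, why that pairing vanishes, a step the paper states without detail.
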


\begin{proof}
 Let $s_1$, $s_2$ be CR rigid $G$-invariant trivializing sections of $L$ defined on an open subset $D\subset X$, 
$|s_j|^2_{h^L}=e^{2\Phi_j}$, $\Phi_j\in\mathcal{C}^\infty(D)$, $j=1,2$. We have 
$s_1=fs_2$ on $D$, $f$ is a rigid CR $G$-invariant function on $D$, $f(x)\neq0$, for every $x\in D$. Thus,
\[|s_1|^2_{h^L}=e^{-2\Phi_1}=|f|^2e^{-2\Phi_2}=e^{-2\Phi_2+\log|f|^2}\]
on $D$. Hence, 
\[\Phi_1=\Phi_2-\frac{1}{2}\log|f|^2.\]
Thus, 
\[\ddbar_b\Phi_1=\ddbar_b\Phi_2-\frac{1}{2}\frac{\ddbar_b\ol{f}}{\ol f}\]
on $D$ and hence 
\begin{equation}\label{e-gue240719ycdh}
\begin{split}
&\gamma_{\Phi_1}=(-2i)(\ddbar_b\Phi_1-\partial_b\Phi_1)\\
&=(-2i)(\ddbar_b\Phi_2-\partial_b\Phi_2)+i(\frac{\ddbar_b\ol f}{\ol f}-\frac{\partial_bf}{f})\\
&=\gamma_{\Phi_2}+i(\frac{\ddbar_b\ol f}{\ol f}-\frac{\partial_bf}{f}).
\end{split}
\end{equation}
Since $f$ is $G$-invariant, we have 
\begin{equation}\label{e-gue240719ycdi}
\langle\,
\frac{\ddbar_b\ol f}{\ol f}-\frac{\partial_bf}{f}\,,\,\xi_X\,\rangle=0\ \ \mbox{on $X$}, 
\end{equation}
for every $\xi\in\mathfrak{g}$. The lemma follows. 
\end{proof}

 From now on, we write 
 \[\gamma: X\To\mathfrak{g}^*\]
 to denote the moment map given by \eqref{e-gue240719ycd}.

 \begin{defn}\label{d-gue240921yyd}
The moment map associated to the form $\omega_0$ is the map $\mu:X \to \mathfrak{g}^*$ such that, for all $x \in X$ and $\xi \in \mathfrak{g}$, we have 
\begin{equation}\label{e-gue240921yydI}
\langle \mu(x), \xi \rangle = \omega_0(\xi_X(x)), 
\end{equation}
$\xi\in\mathfrak{g}$, $\xi_X$ : the vector field on $X$ induced by $\xi$.
\end{defn}

For every $t\in  I$, let $\hat\mu_t: X\To\mathfrak{g}^*$ be as in \eqref{e-gue240921yyd}. 
Recall that in this work, we work with Assumption~\ref{a-gue240720yyd}.



\subsection{CR reduction with respect to curvature data}
\label{sec:conred}

Let $Y:=\hat\mu^{-1}_t(0)$ and denote by $HY:={\rm Ker\,}\omega_0\cap TY$. Let $X_G:=Y/G$ 
and let $\pi: Y\To X_G$ be the natural projection. Let $HX_G:=\pi_*HY$, 
$\pi_*$ is the push-forward map of $\pi$. We are now going to prove that $X_G$ is a CR manifold. Fix $t_0\in I$. Recall that zero is a regular value of $\hat\mu_{t_0}$. Recall that $I$ is the open interval as in 
\eqref{e-gue240715ycd}. For every $x\in Y$, let 
\[b_x(\cdot,\cdot):=((-2i)R^L_x-2t_0d\omega_0(x))(\cdot,J\cdot)\]
be the bilinear form on $H_xX$. Let 
\[\underline{\mathfrak{g}}^{\perp_b}:=\{v\in HX;\, b(\xi_X,v)=0,\ \ \mbox{for all $\xi_X\in\underline{\mathfrak{g}}$}\}.\]
Since $R^L_x-2t_0\mathcal{L}_x$ is positive, \begin{equation}\label{e-gue240720yd}
\underline{\mathfrak{g}}\cap\underline{\mathfrak{g}}^{\perp_b}=\{0\}.
\end{equation}
Hence, $b$ is a non-degenerate bilinear form and thus 
\begin{equation}\label{e-gue240720yydI}
\underline{\mathfrak{g}}\oplus\underline{\mathfrak{g}}^{\perp_b}=H_xX,
\end{equation}
for every $x\in Y$. Let $H^HY:=\underline{\mathfrak{g}}^{\perp_b}|_Y\cap HY$. 
From \eqref{e-gue240720yydI}, we have 
\begin{equation}\label{e-gue240720yydII}
HY=\underline{g}|_Y\oplus H^HY.
\end{equation}

\begin{lem}\label{l-gue240720yyd}
We have 
\begin{equation}\label{e-gue240720ycd}
\underline{\mathfrak{g}}^{\perp_b}|_Y=JHY,
\end{equation}
and 
\begin{equation}\label{e-gue240720ycdI}
HX|_Y=J\underline{\mathfrak{g}}|_Y\oplus HY
=J\underline{\mathfrak{g}}|_Y\oplus\underline{\mathfrak{g}}|_Y\oplus H^HY.
\end{equation}
\end{lem}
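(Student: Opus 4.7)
The lemma reduces to identifying $HY$ with $J(\underline{\mathfrak{g}}^{\perp_b}|_Y)$; once this is in hand, \eqref{e-gue240720ycd} follows by applying $J$, and \eqref{e-gue240720ycdI} follows by combining with the previously established decompositions \eqref{e-gue240720yydI} and \eqref{e-gue240720yydII}. The decisive input is a moment-map type identity expressing $d\hat\mu_{t_0}$, restricted to horizontal directions, through the bilinear form $b$. Concretely I would first establish that, for every $x\in Y$, every $\xi\in\mathfrak{g}$, and every $v\in H_xX$,
\[\langle d\hat\mu_{t_0}(x)(v),\,\xi\rangle = -\,b_x(\xi_X(x),\,Jv).\]
Working locally with a $G\times\mathbb{R}$-invariant rigid CR trivializing section $s$ of weight $\Phi$, one has $\langle\hat\mu_{t_0},\xi\rangle = -2i(\ddbar_b\Phi - \pr_b\Phi)(\xi_X) - 2t_0\,\omega_0(\xi_X)$. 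Differentiating with Cartan's magic formula and using that $G$-invariance of $h^L$, of $\omega_0$ and of the CR structure makes the Lie derivatives $\mathcal{L}_{\xi_X}(\ddbar_b\Phi - \pr_b\Phi)$ and $\mathcal{L}_{\xi_X}\omega_0$ vanish, only the contractions $\iota_{\xi_X}d(\ddbar_b\Phi - \pr_b\Phi)$ and $\iota_{\xi_X}d\omega_0$ survive. Pairing with $v\in HX$ and invoking Definition~\ref{d-gue150808g} and \eqref{e-gue240718yydI} identifies the resulting expression with $((-2i)R^L_x - 2t_0\,d\omega_0)(\xi_X,v)$, which, by the definition of $b$ and $J^2 = -\mathrm{id}$ on $HX$, equals $-b_x(\xi_X,Jv)$.

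\textbf{Completing the proof.} Since $0$ is a regular value of $\hat\mu_{t_0}$, we have $T_xY = \mathrm{Ker}\,d\hat\mu_{t_0}(x)$, and since $HX = \mathrm{Ker}\,\omega_0$ we can rewrite $HY = HX\cap TY$. By the identity just established,
\[v\in HY \iff v\in H_xX \text{ and } b_x(\xi_X,Jv)=0 \text{ for every } \xi\in\mathfrak{g} \iff Jv\in\underline{\mathfrak{g}}^{\perp_b},\]
so $HY = J(\underline{\mathfrak{g}}^{\perp_b}|_Y)$. Applying $J$ once more (using $J^2=-\mathrm{id}$ on $HX$ and that linear subspaces are invariant under multiplication by $-1$) gives $JHY = \underline{\mathfrak{g}}^{\perp_b}|_Y$, which is \eqref{e-gue240720ycd}. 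Substituting this into \eqref{e-gue240720yydI} yields $HX|_Y = \underline{\mathfrak{g}}|_Y \oplus JHY$; applying $J$ to both sides, together with the $J$-invariance of $HX$, gives the first displayed identity $HX|_Y = J\underline{\mathfrak{g}}|_Y \oplus HY$. Inserting \eqref{e-gue240720yydII} for the $HY$-factor produces the three-piece splitting $HX|_Y = J\underline{\mathfrak{g}}|_Y \oplus \underline{\mathfrak{g}}|_Y \oplus H^HY$, which is \eqref{e-gue240720ycdI}.

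\textbf{Main obstacle.} The only non-routine point is the moment-map identity. Cartan's calculus is standard, but one must keep careful track of the non-horizontal part of $\xi_X$: since $G$ is not assumed to act horizontally, $\xi_X$ generally has a nontrivial $T$-component $-\omega_0(\xi_X)T$. One must verify that at a point $x\in Y$ this defect disappears, which it does because $\omega_0(\xi_X(x)) = \langle\mu(x),\xi\rangle = 0$ by the defining assumption $Y\subset\mu^{-1}(0)$ in Assumption~\ref{a-gue240720yyd}, so that the computation reduces cleanly to the horizontal formula above.
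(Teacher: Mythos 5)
Your proof is correct and, for \eqref{e-gue240720ycd}, it is essentially the paper's argument: the paper also works with a $G\times\mathbb R$-invariant rigid weight $\Phi$ and uses the identity $b_p(\xi_X,JV)=d\gamma_{\Phi}(\xi_X,JV)$ (its \eqref{e-gue240721yyd}), i.e.\ the same Cartan-formula/$G$-invariance computation you carry out, combined with the regular-value hypothesis for $\hat\mu_{t_0}$, to identify $\underline{\mathfrak{g}}^{\perp_b}|_Y$ with $JHY$; your remark that $\xi_X(x)\in H_xX$ for $x\in Y$ (because $Y\subset\mu^{-1}(0)$ by Assumption~\ref{a-gue240720yyd}) is precisely what makes $b_x(\xi_X,\cdot)$ meaningful and is implicitly used by the paper too. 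One harmless discrepancy: with the paper's conventions your identity $\langle d\hat\mu_{t_0}(x)(v),\xi\rangle=-b_x(\xi_X,Jv)$ seems to come out as $+b_x(\xi_X,Jv)$, but only the vanishing of the pairing is used, so nothing is affected (the paper's own intermediate identity is equally loose about constants and the $t_0\,d\omega_0$ term, which you actually track more carefully). For \eqref{e-gue240720ycdI} you take a genuinely shorter route: you apply $J$ to the decomposition \eqref{e-gue240720yydI} after substituting \eqref{e-gue240720ycd} and then insert \eqref{e-gue240720yydII}, whereas the paper proves the additional fact $b_p(J\xi_X,V)=0$ for $V\in H_pY$ and concludes via the dimension count $\dim H_pX=\dim H_pY+\dim J\underline{\mathfrak{g}}_p$ together with the nondegeneracy of $b_p$. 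Both are valid; your linear-algebra shortcut avoids the extra computation, while the paper's variant exhibits the transversality of $J\underline{\mathfrak{g}}|_Y$ and $HY$ directly in terms of $b$.
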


\begin{proof}
Fix $p\in Y$ and let $s$ be a local CR rigid $G$-invariant trivializing section of $L$ 
defined on a $G$-invariant open subset $D$ of $p$ in $X$, $|s|^2_{h^L}=e^{-2\Phi}$. 
For $V\in H_pX$ and $\xi\in\mathfrak{g}$, we have 
\begin{equation}\label{e-gue240721yyd}
b_p(\xi_X,JV)=d\gamma_{\Phi}(\xi_X,JV).
\end{equation} 
From \eqref{e-gue240721yyd}, we see that $V\in\underline{\mathfrak{g}}^{\perp_b}_p$ 
if and only if $d\gamma_{\Phi}(\xi_X,JV)=0$, for all $\xi_X\in\underline{\mathfrak{g}}_p$. Since $0$ is a regular value of $\hat\mu_{t_0}$, $d\gamma_{\Phi}(\xi_X,JV)=0$, for all $\xi_X\in\underline{\mathfrak{g}}_p$ if and only 
$JV\in H_pY$. We get \eqref{e-gue240720ycd}.

Now, for $V\in H_pY$ and $\xi\in\mathfrak{g}$, we have 
\begin{equation}\label{e-gue240721yydI}
b_p(J\xi_X,V)=d\gamma_{\Phi}(V,\xi_X)=0.
\end{equation}
From \eqref{e-gue240721yydI}, 
\[{\rm dim\,}H_pX={\rm dim\,}H_pY+{\rm dim\,}J\underline{\mathfrak{g}}_p\]
and the fact that $b_p$ is non-degenerate, we obtain \eqref{e-gue240720ycdI}. 
\end{proof}

From \eqref{e-gue240720ycd}, we have 

\begin{equation}\label{e-gue240721yydII}
H^HY=JHY\cap HY.
\end{equation}
From \eqref{e-gue240720yydII}, we can identify $HX_G$ with $H^HY$ and from \eqref{e-gue240721yydII}, we can define 
complex structure map $J_G$ on $HX_G$: For $V \in HX_G$, we denote by $V^H$ its lift in $H^HY$, and we define $J_G$ on $X_G$ by
\begin{equation}\label{E:jg}
(J_GV)^H = J(V^H).
\end{equation}
Hence, we have $J_G: HX_G \to HX_G$ such that $J_G^2 = -\operatorname{id}$, where $\operatorname{id}$ denotes the identity map $\operatorname{id}  \, : \, HX_G \to HX_G.$ By complex linear extension of $J_G$ to $\mathbb{C}TX_G$, we can define the $i$-eigenspace of $J_G$ is given by $T^{1,0}X_G \, = \, \left\{ V \in \mathbb{C}HX_G \,; \, J_GV \, =  \,  \sqrt{-1}V  \right\}.$

\begin{prop}\label{p-gue240808yyd}
	The subbundle $T^{1,0}X_G$ is a CR structure of $X_G$.
\end{prop}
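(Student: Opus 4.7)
To prove that $T^{1,0}X_G$ is a CR structure, I need to verify: (a) $T^{1,0}X_G$ is a smooth complex subbundle of $\mathbb{C}TX_G$ of complex rank $n-d$ with $T^{1,0}X_G\cap T^{0,1}X_G=\{0\}$; and (b) the formal integrability $[T^{1,0}X_G,T^{1,0}X_G]\subset T^{1,0}X_G$. My plan is to reduce both to the analogous properties of $T^{1,0}X$ on $X$, using the horizontal bundle $H^HY\subset TY$ as the link.

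For (a), the crucial point is the $J$-invariance of $H^HY$: by \eqref{e-gue240721yydII}, $H^HY=JHY\cap HY$, so $J(H^HY)=J^2HY\cap JHY=HY\cap JHY=H^HY$. Since $J$ commutes with the $G$-action by Assumption~\ref{ass:2} and $\pi_*$ restricted to $H^HY$ is a fiberwise isomorphism onto $HX_G$ (because $HY=\underline{\mathfrak{g}}|_Y\oplus H^HY$ by \eqref{e-gue240720yydII} and $G$ acts freely near $Y$), $J$ descends to $J_G$ on $HX_G$ with $J_G^2=-\mathrm{id}$. Consequently $T^{1,0}X_G$ is a smooth complex subbundle of $\mathbb{C}HX_G$ of complex rank $n-d$, $T^{0,1}X_G=\overline{T^{1,0}X_G}$ is the $(-i)$-eigenspace of $J_G$, and (a) follows.

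For (b), set $T^{1,0}Y^H:=\mathbb{C}H^HY\cap T^{1,0}X|_Y$; by the $J$-invariance of $H^HY$ this is a smooth complex subbundle of $T^{1,0}X|_Y$, and $\pi_*$ induces a bijection between $G$-invariant sections of $T^{1,0}Y^H$ and sections of $T^{1,0}X_G$. Given $U,V\in\mathcal{C}^\infty(X_G,T^{1,0}X_G)$, form the unique $G$-invariant lifts $\widetilde{U},\widetilde{V}\in\mathcal{C}^\infty(Y,T^{1,0}Y^H)$. Extend them to $\hat{U},\hat{V}\in\mathcal{C}^\infty(W,T^{1,0}X)$ on a neighborhood $W$ of $Y$ in $X$, using the standard extension of a section of a subbundle across a closed submanifold. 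Integrability of $T^{1,0}X$ gives $[\hat{U},\hat{V}]\in\mathcal{C}^\infty(W,T^{1,0}X)$. Since $\widetilde{U},\widetilde{V}$ are tangent to $Y$ (as $H^HY\subset HY\subset TY$), the restriction $[\hat{U},\hat{V}]|_Y=[\widetilde{U},\widetilde{V}]$ is intrinsic and belongs to $T^{1,0}X|_Y\cap\mathbb{C}TY=T^{1,0}X|_Y\cap\mathbb{C}HY$.

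The main algebraic task is then to identify $T^{1,0}X|_Y\cap\mathbb{C}HY$ with $T^{1,0}Y^H$. Using \eqref{e-gue240720ycdI} and the $J$-invariance of both $(\underline{\mathfrak{g}}\oplus J\underline{\mathfrak{g}})|_Y$ and $H^HY$, I split $T^{1,0}X|_Y=T^{1,0}E\oplus T^{1,0}Y^H$, where $T^{1,0}E=\{a-iJa:a\in\mathbb{C}\underline{\mathfrak{g}}|_Y\}$ is the $(+i)$-eigenspace of $J$ in $\mathbb{C}(\underline{\mathfrak{g}}\oplus J\underline{\mathfrak{g}})|_Y$. The relation $\underline{\mathfrak{g}}|_Y\cap J\underline{\mathfrak{g}}|_Y=\{0\}$, which follows from \eqref{e-gue240720yd} together with \eqref{e-gue240720ycd}, forces $T^{1,0}E\cap\mathbb{C}\underline{\mathfrak{g}}|_Y=\{0\}$, and since $\mathbb{C}HY=\mathbb{C}\underline{\mathfrak{g}}|_Y\oplus\mathbb{C}H^HY$, the intersection leaves exactly $T^{1,0}Y^H$. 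Pushing forward, $[U,V]=\pi_*[\widetilde{U},\widetilde{V}]\in T^{1,0}X_G$, which completes (b). The main obstacle is this algebraic decomposition, namely aligning the $G$-orbit splitting with the complex structure $J$; once it is in hand, integrability is directly inherited from $X$.
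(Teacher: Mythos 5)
Your proof is correct and follows essentially the same route as the paper: lift sections of $T^{1,0}X_G$ to $G$-invariant horizontal sections lying in $T^{1,0}X\cap\mathbb{C}HY$, invoke the integrability of $T^{1,0}X$ together with tangency to $Y$, identify $T^{1,0}X|_Y\cap\mathbb{C}HY$ with $\mathbb{C}H^HY\cap T^{1,0}X|_Y$, and push forward by $\pi_*$. The only differences are presentational: you verify the bundle/rank statements for $J_G$ explicitly, extend the lifts to a neighborhood before bracketing (a point the paper glosses over), and obtain the key identification via the eigenspace splitting of $J$, whereas the paper gets it directly by writing $[u^H,v^H]=W-\sqrt{-1}JW$ and noting $W\in HY\cap JHY=H^HY$.
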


\begin{proof}
Let $u, v \in\mathcal{C}^\infty(X_G, T^{1,0}X_G)$, then we can find $U, V \in\mathcal{C}^\infty(X_G, HX_G)$ such that 
\[
u= U - \sqrt{-1}J_GU, \qquad v=V-\sqrt{-1}J_GV.
\]
By \eqref{E:jg}, we have
\[
u^H=U^H-\sqrt{-1}JU^H, \quad v = V^H - \sqrt{-1}JV^H \in T^{1,0}X \cap \mathbb{C}HY.
\]
Since $T^{1,0}X$ is a CR structure and it is clearly that $[u^H, v^H] \in \mathbb{C}HY,$ we have $[u^H, v^H] \in T^{1,0}X \cap \mathbb{C}HY.$ Hence, there is a $W \in\mathcal{C}^\infty(X, HX)$ such that
\[
[u^H, v^H] = W-\sqrt{-1}JW.
\]  
In particular, $W, JW \in HY$. Thus, $W \in HY \cap JHY = H^HY$. Let $X^H \in H^HY$ be a lift of $X \in TX_G$ such that $X^H=W$. Then we have
\[
[u, v] = \pi_*[u^H, v^H] = \pi_*(X^H -\sqrt{-1}JX^H) = X - \sqrt{-1}J_GX \in T^{1,0}X_G,
\]
i.e. we have $[\mathcal{C}^\infty(X_G, T^{1,0}X_G), \mathcal{C}^\infty(X_G, T^{1,0}X_G)] \subset\mathcal{C}^\infty(X_G, T^{1,0}X_G).$ Therefore, $T^{1,0}X_G$ is a CR structure of $X_G$.
\end{proof}

\section{Proofs of Theorem~\ref{thm:Gszego} and Theorem~\ref{thm:2}}

Before delving into the proof we shall introduce local coordinates compatible with the  actions of $G$ on $X$ and the $\mathbb R$-action.

\subsection{Local coordinates}
\label{sec:loccoor}

In this chapter we specialize the local coordinates introduced in Section \ref{sec:crgeom} taking into account the action of the group $G$ and $\mathbb{R}$. Let $e_0\in G$ be the identity element and let $v=(v_1,\dots,v_d)$ be the local coordinates of $G$ defined in a neighborhood $V$ of $e$ with $v(e_0)=(0,\dots,0)$. From now on, we will identify the element $g\in V$ with $v(g)$.    

\begin{prop} \label{prop:coordinates} 
Fix $t_0\in I$. Let $p\in Y$ and let $s$ be a local CR rigid $G$-invariant trivializing section of $L$ defined on a $G$-invariant open set $D$ of $p$ in $X$, $|s|^2_{h^L}=e^{-2\Phi}$. Then there exist local coordinates $x=(x_1,\,\dots,\,x_{2n+1})$ on $X$ defined in a neighborhood $U=U_1\times U_2\subset D$ of $p$ with $p\equiv 0$, $U_1\subset \mathbb{R}^d$ (resp.  $U_2\subset \mathbb{R}^{2n+1-d}$) is an open neighborhood of $0\in \mathbb{R}^d$ (resp.  $0\in \mathbb{R}^{2n+1-d}$) and a smooth function $\Gamma=(\Gamma_1,\dots,\Gamma_d)\in \mathcal{C}^{\infty}(U_2,\,U_1)$ with $\Gamma(0)=0\in \mathbb{R}^d$, $\Gamma$ is independent of $x_{2n+1}$, such that
	\begin{equation}\label{e-gue161202}
 \begin{split}
& \mbox{$T=-\frac{\pr}{\pr x_{2n+1}}$ on $U$},\\
		&(v_1,\dots,\,v_d)\circ (\Gamma(x_{d+1},\dots,x_{2n}), x_{d+1},\dots,x_{2n+1}) \\
		&\quad =(v_1+\Gamma_1(x_{d+1},\dots,x_{2n}),\dots,v_d+\Gamma_d(x_{d+1},\dots,x_{2n}),x_{d+1},\dots,x_{2n+1}) 
	\end{split}
 \end{equation}
for each $(v_1,\dots,v_d)\in V$ and for each $(x_{d+1},\dots,\,x_{2n+1})\in U_2$, 
	\begin{equation}\label{e-gue161206}
 \begin{split}
	&\underline{\mathfrak{g}}=\mathrm{span}\left\{\frac{\partial}{\partial x_1},\cdots,\,\frac{\partial}{\partial x_d}\right\}\,,\\
		&Y =\{x_{d+1}=\cdots = x_{2d} =0 \}\,, \\
		& J\left(\frac{\partial}{\partial x_j}\right)=\frac{\partial}{\partial x_{d+j}} \quad \text{on }Y \text{ for }j=1,\dots, d\,,
  \end{split}
  \end{equation}
  \begin{equation}\label{e-gue161202I}
  \begin{split}
		& T^{1,0}X = \mathrm{span}\{ Z_{1,t_0},\dots,\,Z_{n,t_0}\}\,, \\
		& Z_{j,t_0}(p)=\frac{1}{2}\left(\frac{\partial}{\partial x_j}-i\frac{\partial}{\partial x_{d+j}}\right)(p)\,,\quad j=1,\dots, d\,, \\
		& Z_{j,t_0}(p)=\frac{1}{2}\left(\frac{\partial}{\partial x_{2j-1}}-i\frac{\partial}{\partial x_{2j}}\right)(p)\,,\quad j=d+1,\dots, d\,, \\
		& \left(R^{L}_p-2t_0\mathcal{L}_p\right)\left(Z_{j,t_0}(p),\,\overline{Z}_{\ell,t_0}(p)\right)=\lambda_j(t_0)\delta_{j,\ell}\,,\qquad  j,\ell=1,2,\dots,n\,, \\
		& \langle Z_{j,t_0}(p)\vert Z_{\ell,t_0}(p)\rangle=\delta_{j,\ell}\,,\qquad  j,\ell=1,2,\dots,n\,,
	\end{split}
 \end{equation}
 where $\{Z_{1,t_0},\ldots,Z_{n,t_0}\}$ is an orthonormal basis of $T^{1,0}X$ on $U$ depending smoothly in $x\in U$.

 Moreover, let $\Td x_j$, $j=1,\ldots,2n+1$, be the coordinates as in Proposition~\ref{thm:hml}. we have 
 \begin{equation}\label{e-gue240826yydII}
 \begin{split}
&\Td x_j=x_j+O(\abs{\mathring{x}^2}),\ \ j=1,\ldots,2n,\\
&\Td x_{2n+1}=x_{2n+1}+\sum^d_{j,\ell=1}\frac{i}{2}(\tau_{j,\ell}-\ol{\tau_{j,\ell}})x_jx_\ell+
\sum^d_{j,\ell=1}-(\tau_{j,\ell}+\ol{\tau_{j,\ell}})x_{d+j}x_\ell\\
&\quad+\sum^n_{j=d+1}\sum^d_{\ell=1}i(\tau_{j,\ell}-\ol{\tau_{j,\ell}})x_{2j-1}x_\ell+\sum^n_{j=d+1}\sum^d_{\ell=1}-(\tau_{j,\ell}+\ol{\tau_{j,\ell}})x_{2j}x_\ell+
O(\abs{\mathring{x}^3}),
 \end{split}
 \end{equation}
 where $\mathring{x}=(x_1,\ldots,x_{2n})$.

\end{prop}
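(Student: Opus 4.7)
The plan is to construct the coordinates in three stages---along the $G$-orbits, along the $\mathbb{R}$-flow of $T$, and in the remaining $2n-d$ transverse directions---and then to derive \eqref{e-gue240826yydII} by matching with the coordinates of Theorem~\ref{thm:hml} via Taylor expansion at $p$. Since $G$ acts freely near $Y$ by Assumption~\ref{a-gue240720yyd}, the slice theorem produces a $(2n+1-d)$-dimensional slice $S$ through $p$; after choosing a chart $v\colon V\to\mathbb{R}^d$ at $e_0\in G$, the map $V\times S\to X$ gives local coordinates near $p$ in which the $G$-orbits lie along the first $d$ coordinates. To realize the precise normal form \eqref{e-gue161202}, I parameterize $S$ as the graph $(x_{d+1},\dots,x_{2n+1})\mapsto(\Gamma(x_{d+1},\dots,x_{2n}),x_{d+1},\dots,x_{2n+1})$ for a smooth $\mathbb{R}^d$-valued $\Gamma$ with $\Gamma(0)=0$, so that the $V$-action shifts the first $d$ coordinates by $v$. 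Since $T$ commutes with $G$ by Assumption~\ref{ass:2} and is transverse to $\underline{\mathfrak{g}}$, the flow of $T$ then defines the coordinate $x_{2n+1}$ with $T=-\partial/\partial x_{2n+1}$ on $U$; the commutativity forces $\Gamma$ to be independent of $x_{2n+1}$.

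Next I use Lemma~\ref{l-gue240720yyd}, which gives $HX|_Y=J\underline{\mathfrak{g}}|_Y\oplus\underline{\mathfrak{g}}|_Y\oplus H^HY$, to choose the remaining transverse coordinates. Since $J\underline{\mathfrak{g}}|_Y$ is transverse to $Y$ of dimension $d$, the coordinates $(x_{d+1},\dots,x_{2d})$ can be selected so that $Y=\{x_{d+1}=\cdots=x_{2d}=0\}$ and $J(\partial/\partial x_j)(p)=\partial/\partial x_{d+j}(p)$ for $j=1,\dots,d$; the remaining $(x_{2d+1},\dots,x_{2n})$ parameterize the $H^HY$-directions. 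A compatible basis of $\mathfrak{g}$ is fixed so that the $\partial/\partial x_j(p)$, $j=1,\dots,d$, are $\langle\,\cdot\,|\,\cdot\,\rangle$-orthonormal in $\underline{\mathfrak{g}}_p$. The diagonalizing rigid frame $\{Z_{j,t_0}\}$ of \eqref{e-gue161202I} is then produced by simultaneous diagonalization of $\langle\,\cdot\,|\,\cdot\,\rangle$ and $R^L_p-2t_0\mathcal{L}_p$ on $T^{1,0}_pX$, carried out block-by-block on the orthogonal decomposition $T^{1,0}_pX=\underline{\mathfrak{g}}^{1,0}_p\oplus(H^HY)^{1,0}_p$ (whose $R^L_p-2t_0\mathcal{L}_p$-orthogonality follows from the $b$-orthogonality between $\underline{\mathfrak{g}}$ and $H^HY$ in Lemma~\ref{l-gue240720yyd} together with the $J$-invariance of $H^HY=JHY\cap HY$), and then extended to a smooth orthonormal rigid frame on $U$ varying smoothly with $x$.

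Finally, the expansion \eqref{e-gue240826yydII} follows by comparing $x$ with the coordinates $\tilde x$ of Theorem~\ref{thm:hml}. Both systems share $T=-\partial/\partial x_{2n+1}=-\partial/\partial\tilde x_{2n+1}$ and a diagonalizing orthonormal CR frame at $p$; the Jacobian of the change of variables is therefore the identity at $p$, so $\tilde x_j=x_j+O(|\mathring{x}|^2)$ for $j=1,\dots,2n$, and the common form of $T$ forces $\tilde x_{2n+1}-x_{2n+1}$ to depend only on $\mathring{x}=(x_1,\dots,x_{2n})$. The explicit quadratic correction in \eqref{e-gue240826yydII} is then read off by requiring the local expansion of $\overline{Z}_{j,t_0}$ in the $x$-coordinates to match, through quadratic order, the expansion of $\overline{W}_{j,t_0}$ in $\tilde x$-coordinates prescribed in Theorem~\ref{thm:hml}. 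The main obstacle in this last step is the bookkeeping: the additional normalization $J(\partial/\partial x_j)(p)=\partial/\partial x_{d+j}(p)$ on $Y$ for $j=1,\dots,d$ is absent from Theorem~\ref{thm:hml}, and it forces the quadratic terms in $\tilde x_{2n+1}-x_{2n+1}$ to involve precisely those $\tau_{j,\ell}$ indexed by a $G$-direction, as recorded in \eqref{e-gue240826yydII}.
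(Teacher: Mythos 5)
Your first two stages follow essentially the paper's route (group-adapted Frobenius/slice coordinates, $Y$ as the zero set of the transverse coordinates, block construction of the diagonalizing frame), but you gloss over two points the paper has to work for: the proposition requires $J(\pr/\pr x_j)=\pr/\pr x_{d+j}$ \emph{along} $Y$, not only at $p$, and arranging this needs the invertibility near $p$ of the block of $J$ sending $\underline{\mathfrak{g}}$ into the $\{x_{d+1},\dots,x_{2d}\}$-directions, which the paper deduces from $\underline{\mathfrak{g}}\cap\underline{\mathfrak{g}}^{\perp_b}=\{0\}$ (Lemma~\ref{l-gue240720yyd}); likewise, realizing $Y=\{x_{d+1}=\cdots=x_{2d}=0\}$ is not read off from Lemma~\ref{l-gue240720yyd} but from the fact that $0$ is a regular value of $\hat\mu_{t_0}$ and that its components are $G\times\mathbb R$-invariant (Assumption~\ref{a-gue240720yyd}), so that they can be taken as the coordinates $x_{d+1},\dots,x_{2d}$.

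The genuine gap is in your derivation of \eqref{e-gue240826yydII}. First, the absence of purely transverse quadratic terms (products $x_jx_\ell$ with $j,\ell\in\{d+1,\dots,2n\}$) in $\Td x_{2n+1}-x_{2n+1}$ is not a consequence of anything you have arranged: it is an extra normalization, obtained in the paper by replacing $x_{2n+1}$ with $x_{2n+1}+h_{2n+1}(0,\dots,0,x_{d+1},\dots,x_{2n})$ so that \eqref{e-gue161209} holds; without this redefinition the claimed expansion is simply false, and one must also track that this change only perturbs the $J$-normalization on $Y$ by a term vanishing at $p$. Second, the coefficients that do appear cannot be ``read off by matching the expansions of $\overline Z_{j,t_0}$ and $\overline W_{j,t_0}$'': both frames are only pinned down at the single point $p$ (they may differ by a unitary factor equal to the identity at $p$), so no matching through quadratic order is available, and the $J$-normalization on $Y$ that you invoke does not control mixed second derivatives $\pr^2\Td x_{2n+1}/\pr x_m\pr x_\ell(p)$ with $x_{d+j}$-directions transverse to $Y$. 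The operative mechanism in the paper is the $G$-invariance of $\omega_0$ (Assumption~\ref{ass:2}): in the coordinates of the proposition the coefficients of $\omega_0$ are independent of $x_1,\dots,x_d$, and comparing this with the expansion \eqref{e-gue161209I} of $\omega_0$ in the coordinates of Theorem~\ref{thm:hml} yields \eqref{e-gue240826yyd}, hence \eqref{e-gue240828yyd}, and only then \eqref{e-gue240826yydII}. Your proposal never uses the invariance of $\omega_0$ at this step, so the quadratic terms indexed by a $G$-direction are not actually determined by your argument.
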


\begin{proof}
From the standard proof of Frobenius Theorem, it is not difficult to see that there exist local coordinates $v=(v_1,\ldots,v_d)$ of $G$ defined in a neighborhood $V$ of $e_0$ with $v(e_0)=(0,\ldots,0)$ and local coordinates $x=(x_1,\ldots,x_{2n+1})$ of $X$ defined in a neighborhood $U\subset D$ of $p$ with $x(p)=0$ such that
\begin{equation}\label{e-gue161203}
\renewcommand{\arraystretch}{1.2}
\begin{array}{ll}
&(v_1,\ldots,v_d)\circ (0,\ldots,0,x_{d+1},\ldots,x_{2n+1})\\
&=(v_1,\ldots,v_d,x_{d+1},\ldots,x_{2n+1}),\ \ \forall (v_1,\ldots,v_d)\in V,\ \ \forall (0,\ldots,0,x_{d+1},\ldots,x_{2n+1})\in U,
\end{array}
\end{equation}
and
\begin{equation}\label{e-gue161103a}
\underline{\mathfrak{g}}={\rm span\,}\set{\frac{\pr}{\pr x_1},\ldots,\frac{\pr}{\pr x_d}},
\end{equation} 
\begin{equation}\label{e-gue240809yyd}
T=-\frac{\pr}{\pr x_{2n+1}}\ \ \mbox{on $D$}.
\end{equation}
Consider the linear map 
\[
\renewcommand{\arraystretch}{1.2}
\begin{array}{rl}
R:\underline{\mathfrak{g}}_p&\To\underline{\mathfrak{g}}_p,\\
u&\To Ru,\ \ \langle\,Ru\,|\,v\,\rangle=\langle\,(-2i)R^L_p-2t_0d\omega_0(p))\,,\,Ju\wedge v\,\rangle.
\end{array}
\]
Since $R$ is self-adjoint, by using linear transformation in $(x_1,\ldots,x_d)$, we can take $(x_1,\ldots,x_d)$ such that, for $j, \ell = 1, 2, \ldots, d$,
\begin{equation}\label{e-gue161102}
\langle\,R\frac{\pr}{\pr x_j}(p)\,|\,\frac{\pr}{\pr x_\ell}(p)\,\rangle=4\lambda_j(t_0)\delta_{j,\ell}, \quad \langle\,\frac{\pr}{\pr x_j}(p)\,|\,\frac{\pr}{\pr x_\ell}(p)\,\rangle=2\delta_{j,\ell}.
\end{equation}
By taking linear transformation in $(v_1,\ldots,v_d)$, \eqref{e-gue161203} still hold. 

Let $\hat\mu_{t_0}(\frac{\pr}{\pr x_j})=a_j(x)\in\mathcal{C}^\infty(U)$, $j=1,2,\ldots,d$. Since $a_j(x)$ is $G\times\mathbb R$-invariant, we have $\frac{\pr a_j(x)}{\pr x_s}=0$, $\frac{\pr a_j}{\pr x_{2n+1}}=0$, $j,s=1,2,\ldots,d$.  By the definition of the moment map, we have 
\[
Y\bigcap U=\set{x\in U;\, a_1(x)=\cdots=a_d(x)=0}.
\] 

Since $0$ is a regular value of the moment map $\hat\mu_{t_0}$, the matrix $\left(\frac{\pr a_j}{\pr x_s}(p)\right)_{1\leq j\leq d,d+1\leq s\leq 2n+1}$ is of rank $d$. We may assume that the matrix $\left(\frac{\pr a_j}{\pr x_s}(p)\right)_{1\leq j\leq d,d+1\leq s\leq 2d}$ is non-singular. Thus, $(x_1,\ldots,x_d,a_1,\ldots,a_d,x_{2d+1},\ldots,x_{2n+1})$ are also local coordinates of $X$. Hence, we can take $v=(v_1,\ldots,v_d)$ and $x=(x_1,\ldots,x_{2n+1})$ such that \eqref{e-gue161203}, \eqref{e-gue161103a}, \eqref{e-gue240809yyd} and \eqref{e-gue161102} hold and 
\begin{equation}\label{e-gue161103I}
Y\bigcap U=\set{x=(x_1,\ldots,x_{2n+1})\in U;\, x_{d+1}=\cdots=x_{2d}=0}.
\end{equation} 

On $Y\bigcap U$, let
\[J(\frac{\pr}{\pr x_j})=b_{j,1}(x)\frac{\pr}{\pr x_1}+\cdots+b_{j,2n+1}(x)\frac{\pr}{\pr x_{2n+1}},\ \ j=1,2,\ldots,d.\]
Since we only work on $Y$, $b_{j,\ell}(x)$ is independent of $x_{d+1},\ldots,x_{2d}$, for all $j=1,\ldots,d$, $\ell=1,\ldots,2n+1$. Moreover, it is easy to see that $b_{j,\ell}(x)$ is also independent of $x_{1},\ldots,x_{d}$, $x_{2n+1}$, for all $j=1,\ldots,d$, $k=1,\ldots,2n+1$. Let $\Td x''=(x_{2d+1},\ldots,x_{2n})$. Hence, $b_{j,\ell}(x)=b_{j,\ell}(\Td x'')$, $j=1,\ldots,d$, $\ell=1,\ldots,2n+1$.
We claim that the matrix $\left(b_{j,\ell}(\Td x'')\right)_{1\leq j\leq d,d+1\leq k\leq 2d}$ is non-singular near $p$. If not, it is easy to see that there is a non-zero vector $u\in J\underline{\mathfrak{g}}\bigcap HY$. Let $u=Jv$, $v\in\underline{\mathfrak{g}}$. Then, $v\in\underline{\mathfrak{g}}\bigcap JHY=\underline{\mathfrak{g}}\bigcap\underline{\mathfrak{g}}^{\perp_b}$. Since $\underline{\mathfrak{g}} \cap \underline{\mathfrak{g}}^{\perp_b} = \left\{ 0 \right\}$ on $Y$, we deduce that $v=0$ and we get a contradiction. The claim follows. From the claim, we can use linear transformation in $(x_{d+1},\ldots,x_{2d})$ (the linear transform depends smoothly on $\Td x''$) such that on $Y$, 
\[
J(\frac{\pr}{\pr x_j})=b_{j,1}(\Td x'')\frac{\pr}{\pr x_1}+\cdots+b_{j,d}(\Td x'')\frac{\pr}{\pr x_{d}}+\frac{\pr}{\pr x_{d+j}}+b_{j,2d+1}(\Td x'')\frac{\pr}{\pr x_{2d+1}}+\cdots+b_{j,2n+1}(\Td x'')\frac{\pr}{\pr x_{2n+1}},
\]
where $j=1,2,\ldots,d$. Consider the coordinates change:
\[
\renewcommand{\arraystretch}{1.2}
\begin{array}{ll}
&x=(x_1,\ldots,x_{2n+1})\To u=(u_1,\ldots,u_{2n+1}),\\
&(x_1,\ldots,x_{2n+1}) \To(x_1-\sum^d_{j=1}b_{j,1}(\Td x'')x_{d+j},\ldots,x_d-\sum^d_{j=1}b_{j,d}(\Td x'')x_{d+j},x_{d+1},\ldots,x_{2d},\\
&\quad\quad x_{2d+1}-\sum^d_{j=1}b_{j,2d+1}(\Td x'')x_{d+j},\ldots,x_{2n+1}-\sum^d_{j=1}b_{j,2n+1}(\Td x'')x_{d+j}).
\end{array}
\]
Then, 
\[
\renewcommand{\arraystretch}{1.2}
\begin{array}{ll}
&\frac{\pr}{\pr x_j}\rightarrow\frac{\pr}{\pr u_j},\ \ j=1,\ldots,d,2d+1,\ldots,2n+1,\\
&\frac{\pr}{\pr x_{d+j}}\rightarrow -b_{j,1}\frac{\pr}{\pr u_1}-\cdots-b_{j,d}\frac{\pr}{\pr u_d}+\frac{\pr}{\pr u_{d+j}}-b_{j,2d+1}\frac{\pr}{\pr u_{2d+1}}-\cdots-b_{j,2n+1}\frac{\pr}{\pr u_{2n+1}},\ \ j=1,\ldots,d.
\end{array}
\]
Hence, on $Y\bigcap U$, $J(\frac{\pr}{\pr x_j})\rightarrow\frac{\pr}{\pr u_{d+j}}$, $j=1,\ldots,d$. Thus, we can take $v=(v_1,\ldots,v_d)$ and $x=(x_1,\ldots,x_{2n+1})$ such that \eqref{e-gue161202}, \eqref{e-gue161103a}, \eqref{e-gue161102}, \eqref{e-gue161103I} hold and on $Y\bigcap U$, 
\[
J(\frac{\pr}{\pr x_j})=\frac{\pr}{\pr x_{d+j}},\ \ j=1,2,\ldots,d.
\]

Let $Z_j=\frac{1}{2}(\frac{\pr }{\pr x_j}-i\frac{\pr}{\pr x_{d+j}})(p)\in T^{1,0}_pX$, $j=1,\ldots,d$. From \eqref{e-gue161102}, we can check that
\[
(R^L_p-2t_0\mathcal{L}_p)(Z_j,\ol Z_k)=\lambda_j(t_0)\delta_{j,k}, \quad \langle\,Z_j\,|\,Z_k\,\rangle=\delta_{j,k}, \quad j, k = 1, \ldots, d.
\]
Since $\underline{\mathfrak{g}}_p$ is orthogonal to $H_pY\bigcap JH_pY$ and $H_pY\bigcap JH_pY\subset\underline{\mathfrak{g}}^{\perp_b}_p$, we can find an orthonormal frame $\set{Z_1,\ldots,Z_d,V_1,\ldots,V_{n-d}}$ for $T^{1,0}_pX$ such that $R^L_p-2t_0\mathcal{L}_p$ is diagonalized with respect to $Z_1,\ldots,Z_d,V_1,\ldots,V_{n-d}$, where $V_1\in\Complex H_pY\bigcap J\Complex H_pY ,\ldots,V_{n-d}\in\Complex H_pY\bigcap J\Complex H_pY$. Write 
\[{\rm Re\,}V_j=\sum^{2n}_{k=1}\alpha_{j,k}\frac{\pr}{\pr x_k},\ \ {\rm Im\,}V_j=\sum^{2n}_{k=1}\beta_{j,k}\frac{\pr}{\pr x_k},\ \ j=1,\ldots,n-d.\]
We claim that $\alpha_{j,k}=\beta_{j,k}=0$, for all $k=d+1,\ldots,2d$, $j=1,\ldots,n-d$. Fix $j=1,\ldots,n-d$. Since ${\rm Re\,}V_j\in\underline{\mathfrak{g}}^{\perp_b}_p$ and ${\rm span\,}\set{\frac{\pr}{\pr x_{d+1}},\ldots,\frac{\pr}{\pr x_{2d}}}\in\underline{\mathfrak{g}}_p^{\perp_b}$, we conclude that 
\begin{equation}\label{e-gue161218}
\sum^d_{k=1}\alpha_{j,k}\frac{\pr}{\pr x_k}+\sum^{2n}_{k=2d+1}\alpha_{j,k}\frac{\pr}{\pr x_k}\in\underline{\mathfrak{g}}^{\perp_b}_p\bigcap H_pY.
\end{equation} 
From \eqref{e-gue240720ycd} and \eqref{e-gue161218}, we deduce that 
\[\sum^d_{k=1}\alpha_{j,k}\frac{\pr}{\pr x_k}+\sum^{2n}_{k=2d+1}\alpha_{j,k}\frac{\pr}{\pr x_k}\in JH_pY\bigcap H_pY=\underline{\mathfrak{g}}^{\perp_b}_p\bigcap H_pY\]
and hence 
\begin{equation}\label{e-gue161218I}
J\Bigr(\sum^d_{k=1}\alpha_{j,k}\frac{\pr}{\pr x_k}+\sum^{2n}_{k=2d+1}\alpha_{j,k}\frac{\pr}{\pr x_k}\Bigr)\in\underline{\mathfrak{g}}^{\perp_b}_p\bigcap H_pY.
\end{equation}
From \eqref{e-gue161218I} and notice that $J({\rm Re\,}V_j)\in\underline{\mathfrak{g}}^{\perp_b}_p$, we deduce that 
\[J(\sum^{2d}_{k=d+1}\alpha_{j,k}\frac{\pr}{\pr x_k})\in\underline{\mathfrak{g}}_p\bigcap\underline{\mathfrak{g}}^{\perp_b}_p=\set{0}.\]
Thus, $\alpha_{j,k}=0$, for all $k=d+1,\ldots,2d$, $j=1,\ldots,n-d$.
Similarly, we can repeat the procedure above and deduce that $\beta_{j,k}=0$, for all $k=d+1,\ldots,2d$, $j=1,\ldots,n-d$. 

Since ${\rm span\,}\set{{\rm Re\,}V_j, {\rm Im\,}V_j;\, j=1,\ldots,n-d}$ is transversal to $\underline{\mathfrak{g}}_p\oplus J\underline{\mathfrak{g}}_p$, we can take linear transformation in $(x_{2d+1},\ldots,x_{2n})$ so that 
\[
\renewcommand{\arraystretch}{1.2}
\begin{array}{ll}
&{\rm Re\,}V_j=\alpha_{j,1}\frac{\pr}{\pr x_1}+\cdots+\alpha_{j,d}\frac{\pr}{\pr x_{d}}+\frac{\pr}{\pr x_{2j-1+2d}},\ \ j=1,2,\ldots,n-d,\\
&{\rm Im\,}V_j=\beta_{j,1}\frac{\pr}{\pr x_1}+\cdots+\beta_{j,d}\frac{\pr}{\pr x_{d}}+\frac{\pr}{\pr x_{2j+2d}},\ \ j=1,2,\ldots,n-d.
\end{array}
\]
Consider the coordinates change:
\[
\renewcommand{\arraystretch}{1.2}
\begin{array}{rcl}
x=(x_1,\ldots,x_{2n+1})&\To & u=(u_1,\ldots,u_{2n+1}),\\
(x_1,\ldots,x_{2n+1})& \To & (x_1-\sum^d_{j=1}\alpha_{j,1}x_{2j-1+2d}-\sum^{d}_{j=1}\beta_{j,1}x_{2j+2d},\ldots,x_d \\ 
& & -\sum^d_{j=1}\alpha_{j,d}x_{2j-1+2d}  -  \sum^{d}_{j=1}\beta_{j,d}x_{2j+2d}, x_{d+1},\ldots, x_{2n+1})
\end{array}
\]
Then, 
\[
\renewcommand{\arraystretch}{1.2}
\begin{array}{rcl}
\frac{\pr}{\pr x_j}& \rightarrow & \frac{\pr}{\pr u_j},\ \ j=1,\ldots,2d,\\
\frac{\pr}{\pr x_{2j-1+2d}}& \rightarrow & -\alpha_{j,1}\frac{\pr}{\pr u_1}-\cdots-\alpha_{j,d}\frac{\pr}{\pr u_d}+\frac{\pr}{\pr u_{2j-1+2d}},\ \ j=1,\ldots,n-d,\\
\frac{\pr}{\pr x_{2j+2d}}& \rightarrow &  -\beta_{j,1}\frac{\pr}{\pr u_1}-\cdots-\beta_{j,d}\frac{\pr}{\pr u_d}+\frac{\pr}{\pr u_{2j+2d}},\ \ j=1,\ldots,n-d.
\end{array}
\]
 Thus, we can take $v=(v_1,\ldots,v_d)$ and $x=(x_1,\ldots,x_{2n+1})$ such that \eqref{e-gue161202}, \eqref{e-gue161206} and \eqref{e-gue161202I} hold.  

Let $\Td x=(\Td x_1,\ldots,\Td x_{2n+1})$ be the coordinates as in Theorem~\ref{thm:hml}. It is easy to see that 
\begin{equation}\label{e-gue161219b}
\renewcommand{\arraystretch}{1.2}
\begin{array}{ll}
&\Td x_j=x_j+h_j(\mathring{x}),\ \ h_j(\mathring{x})=O(\abs{x}^2),\ \ j=1,2,\ldots,2n,\\
&\Td x_{2n+1}=x_{2n+1}+h_{2n+1}(\mathring{x}),\ \ h_{2n+1}(\mathring{x})=O(\abs{x}^2),
\end{array}
\end{equation}
where $\mathring{x}=(x_1,\ldots,x_{2n})$. 
We may change $x_{2n+1}$ be $x_{2n+1}+h_{2n+1}(0,\ldots,0,x_{d+1},\ldots,x_{2n})$ and we have 
\begin{equation}\label{e-gue161209}
\frac{\pr^2\Td x_{2n+1}}{\pr x_j\pr x_k}(p)=0,\ \ j, k=\set{d+1,\ldots,2n}.
\end{equation}
Note that when we change $x_{2n+1}$ to $x_{2n+1}+h_{2n+1}(0,\ldots,0,x_{d+1},\ldots,x_{2n})$, $\frac{\pr}{\pr x_j}$ will change to $\frac{\pr}{\pr x_j}+\alpha_j(x)\frac{\pr}{\pr x_{2n+1}}$, $j=d+1,\ldots,2n$, where $\alpha_j(x)$ is a smooth function on $Y\bigcap U$, independent of $x_1,\ldots,x_{d}$, $x_{2n+1}$ and $\alpha_j(0)=0$, $j=d+1,\ldots,2n$. Hence, on $Y\bigcap U$, we have $J(\frac{\pr}{\pr x_j})=\frac{\pr}{\pr x_{d+j}}+a_j(x)\frac{\pr}{\pr x_{2n+1}}$, $j=1,2,\ldots,d$,  where $a_j(x)$ is a smooth function on $\mu^{-1}(0)\bigcap U$, independent of $x_1,\ldots,x_{2d}$, $x_{2n+1}$ and $a_j(0)=0$, $j=1,\ldots,d$. 

From \eqref{e-gue240718yydII} and \eqref{e-gue161219b}, it is straightforward to see that 
\begin{equation}\label{e-gue161209I}
\begin{split}
\omega_0(\Td x)&=d\Td x_{2n+1}-i\sum^n_{j,t=1}\tau_{j,t}\ol{\Td z_t}d\Td z_j+i\sum^n_{j,t=1}\ol\tau_{j,t}\Td z_td\ol{\Td z_j}+O(\abs{\mathring{x}}^2)\\
&=dx_{2n+1}-i\sum^n_{j,t=1}\tau_{j,t}\ol z_tdz_j+i\sum^n_{j,t=1}\ol\tau_{j,t}z_td\ol{z_j}+\sum^n_{j=1}(\frac{\pr\Td x_{2n+1}}{\pr z_j}dz_j+\frac{\pr\Td x_{2n+1}}{\pr \ol z_j}d\ol z_j)+O(\abs{\mathring{x}}^2).
\end{split}
\end{equation}
Note that $\omega_0$ is $G$-invariant. From this observation and \eqref{e-gue161209I}, we deduce that 
\begin{equation}\label{e-gue240826yyd}
\begin{split}
&\frac{\pr^2\Td x_{2n+1}}{\pr z_j\pr x_\ell}(p)=i\tau_{j,\ell},\ \ j\in\set{1,\ldots,n}, \ell\in\set{1,\ldots,d},\\
&\frac{\pr^2\Td x_{2n+1}}{\pr\ol z_j\pr x_\ell}(p)=-i\ol{\tau_{j,\ell}},\ \ j\in\set{1,\ldots,n}, \ell\in\set{1,\ldots,d}.
\end{split}
\end{equation}
From \eqref{e-gue240826yyd}, we can computer that
\begin{equation}\label{e-gue240828yyd}
\begin{split}
&\frac{\pr^2\Td x_{2n+1}}{\pr x_j\pr x_\ell}(p)=i\tau_{j,\ell}-i\ol{\tau_{j,\ell}},\ \ j, \ell\in\set{1,\ldots,d},\\
&\frac{\pr^2\Td x_{2n+1}}{\pr x_{d+j}\pr x_\ell}(p)=-(\tau_{j,\ell}+\ol{\tau_{j,\ell}}),\ \ j, \ell\in\set{1,\ldots,d},\\
&\frac{\pr^2\Td x_{2n+1}}{\pr x_{2j-1}\pr x_\ell}(p)=i\tau_{j,\ell}-i\ol{\tau_{j,\ell}},\ \ j\in\set{d+1,\ldots,n},\ \ \ell\in\set{1,\ldots,d},\\
&\frac{\pr^2\Td x_{2n+1}}{\pr x_{2j}\pr x_\ell}(p)=-(\tau_{j,\ell}+\ol{\tau_{j,\ell}}),\ \ j\in\set{d+1,\ldots,n},\ \ \ell\in\set{1,\ldots,d}.
\end{split}
\end{equation}
From \eqref{e-gue161209} and \eqref{e-gue240828yyd}, we get \eqref{e-gue240826yydII}. 

\end{proof}

Let us now write the phase function \eqref{eq:phase} in local coordinates defined above.

\begin{prop}\label{p-gue240812yyd} 
Let $p\in\mu^{-1}(0)$ and fix $t_0\in I$. 
Let $s$ be a local rigid CR frame of $L$ defined on $D$, $|s|^2_{h^L}=e^{-2\Phi}$, $\Phi\in\mathcal{C}^\infty(D)$. Let $x=(x_1,\ldots,x_{2n+1})$ be local coordinates as in Proposition \ref{prop:coordinates}.  Then, there exists a neighborhood of $(p,\,p)$ such that 
\begin{equation} \label{eq:phasez}
\begin{split}
\varphi(x,y,t_0)=&t_0(x_{2n+1}-y_{2n+1})-\frac{i}{2}\sum_{j,l=1}^{n}(a_{l,j}+a_{j,l})(z_jz_l-w_jw_l)\\
&+\frac{i}{2}\sum_{j,l=1}^{n}(\overline{a}_{l,j}+\overline{a}_{j,l})(\overline{z}_j\overline{z}_l-\overline{w}_j\overline{w}_l) +\frac{i\,t_0}{2}\sum_{j,l=1}^{n}(\overline{\tau}_{l,j}-\tau_{j,l})({z}_j\overline{z}_l-{w}_j\overline{w}_l) \\
		&-\frac{i}{2}\sum_{j=1}^{n}\lambda_j(t_0)({z}_j\overline{w}_j-\overline{z}_j{w}_j)+\frac{i}{2}\sum_{j=1}^{n}\lambda_j(t_0)\lvert {z}_j-{w}_j\rvert^2\\
  &+i\frac{t_0}{2}\sum^d_{j,l=1}(\tau_{j,l}-\ol{\tau_{j,l}})(x_jx_l-y_jy_l)+t_0\sum^d_{j,l=1}(\tau_{j,l}+\ol{\tau_{j,l}})(-x_{d+j}x_l+y_{d+j}y_l)\\
  &+t_0\sum^n_{j=d+1}\sum^d_{l=1}i(\tau_{j,l}-\ol{\tau_{j,l}})(x_{2j-1}x_l-y_{2j-1}y_l)\\
&+t_0\sum^n_{j=d+1}\sum^d_{l=1}-(\tau_{j,l}+\ol{\tau_{j,l}})(x_{2j}x_l-y_{2j}y_l)+O(\lvert (z,w)\rvert^3),
	\end{split}
 \end{equation}
 where $\tau_{j,l}, a_{j,l}\in\mathbb C$, $j,l=1,\ldots,n$, $\lambda_j(t_0)$, $j=1,\ldots,n$, are as in Theorem~\ref{thm:hml}. 

 Moreover, we have 
 \begin{equation}\label{e-gue240830yyd}
\mu_{j,\ell}+t_0(\ol{\tau_{\ell,j}}+\tau_{j,\ell})=\delta_{j,\ell}\lambda_j(t_0),\ \ j,\ell=1,\ldots,n,
 \end{equation}
 and for all $j=1,\ldots,d$, $\ell=1,\ldots,n$
 \begin{equation}\label{e-gue240830yyda}
\begin{split}
\frac{1}{2}\mu_{j,\ell}+\ol{a_{j,\ell}}+\ol{a_{\ell,j}}=0,\ \ j=1,\ldots,d,\ \ \ell=1,\ldots,n,\\
\frac{1}{2}\mu_{\ell,j}+a_{j,\ell}+a_{\ell,j}=0,\ \ j=1,\ldots,d,\ \ \ell=1,\ldots,n
\end{split}
 \end{equation}
 where $\mu_{j,\ell}$, $a_{j,ell}$, are as in \eqref{e-gue240718yyda}

\end{prop}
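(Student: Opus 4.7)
My plan is to derive \eqref{eq:phasez} by substituting the coordinate change \eqref{e-gue240826yydII} into the phase formula \eqref{eq:phase} of Theorem~\ref{thm:hml}, and to obtain \eqref{e-gue240830yyd} and \eqref{e-gue240830yyda} from two separate pieces of structure built into the coordinates of Proposition~\ref{prop:coordinates}: the $G$-invariance of the local weight $\Phi$, and the fact that the chosen frame diagonalizes $R^L-2t_0\mathcal{L}$ at $p$ with eigenvalues $\lambda_j(t_0)$.

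The derivation of \eqref{eq:phasez} is routine: \eqref{e-gue240826yydII} gives $\widetilde{z}_j = z_j + O(\abs{\mathring x}^2)$ for $j=1,\ldots,n$, so every purely quadratic expression in $\widetilde{z},\widetilde{w}$ appearing in \eqref{eq:phase} agrees with the corresponding expression in $z,w$ up to $O(\abs{(z,w)}^3)$. The only nontrivial substitution occurs in $t_0(\widetilde{x}_{2n+1}-\widetilde{y}_{2n+1})$; plugging in the explicit quadratic correction for $\widetilde{x}_{2n+1}$ from \eqref{e-gue240826yydII} produces exactly the four extra sums on the right-hand side of \eqref{eq:phasez}.

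For \eqref{e-gue240830yyda}, the key observation is that because $s$ is $G\times\mathbb R$-invariant and $h^L$ is $G\times\mathbb R$-invariant, the local weight $\Phi$ is $G$-invariant. Since Proposition~\ref{prop:coordinates} gives $\underline{\mathfrak{g}}=\mathrm{span}\{\partial/\partial x_1,\ldots,\partial/\partial x_d\}$, this invariance means $\partial\Phi/\partial x_j\equiv 0$ on the coordinate patch for $j=1,\ldots,d$. Writing $\partial/\partial x_j=\partial/\partial z_j+\partial/\partial \overline z_j$ (valid because $z_j=x_j+ix_{d+j}$ for $j\le d$) and substituting \eqref{e-gue240718yyda}, the coefficients of $z_\ell$ and of $\overline z_\ell$ in the expansion of $\partial\Phi/\partial x_j$ must vanish separately; using $\mu_{j,\ell}=\overline\mu_{\ell,j}$ this produces exactly the two identities in \eqref{e-gue240830yyda}.

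For \eqref{e-gue240830yyd}, I would compute the $(j,\ell)$-entry of $R^L_p - 2t_0\mathcal{L}_p$ in the frame $\{Z_{j,t_0}\}$ directly from the expansions of $\Phi$ and $\omega_0$, and then match with the diagonalization $\lambda_j(t_0)\delta_{j,\ell}$ imposed by Proposition~\ref{prop:coordinates}. The Levi part is obtained by differentiating the $\omega_0$ expansion in Theorem~\ref{thm:hml} and using \eqref{e-gue240718yydI}, giving $-\tfrac12(\tau_{j,\ell}+\overline{\tau_{\ell,j}})$ for the $(j,\ell)$-entry, so $-2t_0\mathcal{L}_p$ contributes $t_0(\tau_{j,\ell}+\overline{\tau_{\ell,j}})$. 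The curvature part is extracted from \eqref{e-gue150808w}: since $T\Phi=0$ one has $d\Phi=\partial_b\Phi+\overline\partial_b\Phi$, and a short computation identifies the $(j,\ell)$-entry of $R^L_p$ in the frame with $\mu_{j,\ell}$. Adding the two contributions and equating with $\delta_{j,\ell}\lambda_j(t_0)$ yields \eqref{e-gue240830yyd}. The main technical obstacle is the bookkeeping in extracting $R^L_p$ from \eqref{e-gue150808w}: since $\overline{W}_{j,t_0}$ carries a $\tau$-dependent component along $\partial/\partial\widetilde{x}_{2n+1}$, one must carefully separate the contribution of $\partial^2\Phi/\partial z\partial\overline z$ at $p$ from the Reeb-direction correction, and confirm that the latter contributes only to the $\tau$-terms already accounted for in $\mathcal{L}_p$.
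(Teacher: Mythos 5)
Your proposal is correct and follows essentially the same route as the paper: \eqref{eq:phasez} is obtained by substituting the coordinate relation \eqref{e-gue240826yydII} into \eqref{eq:phase} (only the $t_0(\Td x_{2n+1}-\Td y_{2n+1})$ term producing new quadratic contributions), \eqref{e-gue240830yyd} comes from evaluating $(R^L_p-2t_0\mathcal{L}_p)(\ol W_{j,t_0},W_{\ell,t_0})=\mu_{j,\ell}+t_0(\ol{\tau_{\ell,j}}+\tau_{j,\ell})$ against the diagonalization $\lambda_j(t_0)\delta_{j,\ell}$, and \eqref{e-gue240830yyda} from the vanishing of the linear part of $\pr\Phi/\pr x_j$, $j=1,\ldots,d$, forced by the $G$-invariance of $\Phi$ in the coordinates of Proposition~\ref{prop:coordinates}. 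The computed contributions (curvature $\mu_{j,\ell}$, Levi term $t_0(\tau_{j,\ell}+\ol{\tau_{\ell,j}})$) agree with the paper's identity \eqref{e-gue240830yydI}, so no gap remains.
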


\begin{proof}
From \eqref{eq:phase} and \eqref{e-gue240826yydII},  we get the \eqref{eq:phasez}.

From \eqref{e-gue240718yydII}, it is straightforward to check that 
\begin{equation}\label{e-gue240830yydI}
(R^L_p-2t_0\mathcal{L}_p)(\ol{W}_{j,t_0}(p),W_{\ell,t}(p))=\mu_{j,\ell}+t_0(\ol{\tau_{\ell,j}}+\tau_{j,\ell})=\lambda_j(t_0)\delta_{j,\ell},
\end{equation}
for all $j, \ell=1,\ldots,n$. From \eqref{e-gue240830yydI}, we get \eqref{e-gue240830yyd}. 



Since $\Phi$ is $G$-invariant, for every $j=1,\ldots,d$,  we have 
\begin{equation}\label{e-gue240901yyd}
\sum^n_{\ell=1}\frac{1}{2}\mu_{j,\ell}\ol z_\ell+\sum^n_{\ell=1}\frac{1}{2}\mu_{\ell,j}z_\ell+\sum^n_{\ell=1}a_{j,\ell}z_{\ell}+\sum^n_{\ell=1}a_{\ell,j}z_\ell+
\sum^n_{\ell=1}\ol{a_{j,\ell}}\,\ol z_\ell+\sum^n_{\ell=1}\ol{a_{\ell,j}}\,\ol z_\ell=0.
\end{equation}
From \eqref{e-gue240901yyd}, we get \eqref{e-gue240830yyda}. 
\end{proof}

\subsection{$G$-invariant Szeg\H{o} kernels asymptoics near $Y$}\label{s-gue240812yyde}

Let 
\[\mathcal{H}^0_{b,I}(X,L^k)^G:=\{u\in\mathcal{H}^0_{b,I}(X,L^k);\, g^*u=u,\ \ \mbox{for all $g\in G$}\}.\]
Let 
\[\Pi^G_{k,I}: L^2(X,L^k)\To\mathcal{H}^0_{b,I}(X,L^k)^G\]
be the orthogonal projection with respect to $(\,\cdot\,|\,\cdot\,)_k$. We consider the $G$-invariant weighted Fourier-Szeg\H{o} operator 
\begin{equation}\label{e-gue240812ycdeb}
P^G_{k,\tau^2}:=F_{k,\tau}\circ\Pi^G_{k,I}\circ F_{k,\tau}: L^2(X,L^k)\To\mathcal{H}^0_{b,I}(X,L^k),
\end{equation}
where $F_{k,\tau}$ is as in \eqref{e-gue150807I}. Let $P^G_{k,\tau^2}$ be as in \eqref{e-gue150807II}. The following was proved in~\cite[Theorem 5.5]{HLM22}. 

\begin{thm}\label{t-gue240813yyd}
Let $\chi, \hat\chi\in\mathcal{C}^\infty(X)$ with ${\rm supp\,}\chi\cap{\rm supp\,}\hat\chi=\emptyset$. Then, 
\begin{equation}\label{e-gue240813yyd}
\chi P_{k,\tau^2}\hat\chi=O(k^{-\infty})\ \ \mbox{on $X\times X$}.
\end{equation}
\end{thm}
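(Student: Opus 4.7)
The strategy is to reduce the claim to the local oscillatory integral representation of $P_{k,\tau^2,s}(x,y)$ from Theorem~\ref{t-gue240717yyd}, and then apply standard non-stationary phase estimates together with the exponential decay coming from the positive imaginary part of the phase.

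First, since $\mathrm{supp}\,\chi$ and $\mathrm{supp}\,\hat\chi$ are disjoint compact subsets of the compact manifold $X$, there exists $\delta>0$ with $d(x,y)\geq\delta$ for every $(x,y)\in\mathrm{supp}\,\chi\times\mathrm{supp}\,\hat\chi$. Using Theorem~\ref{t-gue240716yyd}, cover $X$ by finitely many $G\times\mathbb{R}$-invariant open sets $D_j$ carrying rigid CR trivializing sections $s_j$ of $L$, and fix a subordinate partition of unity. The assertion $\chi P_{k,\tau^2}\hat\chi=O(k^{-\infty})$ on $X\times X$ reduces, after localization via $s_j$ on the source and $s_\ell$ on the target, to showing that for every pair $(D_j,D_\ell)$ and every $\chi_1\in \mathcal{C}^\infty_c(D_j)$, $\chi_2\in \mathcal{C}^\infty_c(D_\ell)$ with $\mathrm{supp}\,\chi_1\cap\mathrm{supp}\,\chi_2=\emptyset$, the localized kernel $\chi_1 P_{k,\tau^2,s_j,s_\ell}\chi_2$ is $O(k^{-\infty})$ on $D_j\times D_\ell$.

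Second, by Theorem~\ref{t-gue240717yyd} (applied with $s=s_j=s_\ell$ on an overlap, or via the standard gluing when $D_j\cap D_\ell=\emptyset$, using $k$-smoothness of off-diagonal contributions obtained the same way) we have
\[
P_{k,\tau^2,s}(x,y)=\int_{\mathbb{R}} e^{ik\varphi(x,y,t)}\,a(x,y,t,k)\,dt+O(k^{-\infty}),
\]
with $a\in S^{n+1}_{\mathrm{loc}}(1;D\times D\times I)$, $\mathrm{supp}_t\,a\subset I$, and phase $\varphi$ obeying
\[
\mathrm{Im}\,\varphi(x,y,t)+\bigl|\partial_t\varphi(x,y,t)\bigr|^2\,\geq\, c\,|x-y|^2
\]
(cf.~\eqref{e-gue150807b}). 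On $\mathrm{supp}\,\chi_1\times\mathrm{supp}\,\chi_2\times I$ we have $|x-y|\geq\delta$, so the above quantity is bounded below by $c\delta^2>0$.

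Third, partition the $t$-integration region by a smooth cut-off into $\Omega_1=\{\mathrm{Im}\,\varphi\geq c\delta^2/2\}$ and $\Omega_2=\{|\partial_t\varphi|^2\geq c\delta^2/2\}$. On $\Omega_1$, the pointwise estimate $|e^{ik\varphi}|=e^{-k\,\mathrm{Im}\,\varphi}\leq e^{-kc\delta^2/2}$ gives exponential decay in $k$, which together with the polynomial symbol bound is more than $O(k^{-\infty})$. On $\Omega_2$, introduce the first-order operator
\[
L\,:=\,\frac{\overline{\partial_t\varphi}}{ik\,|\partial_t\varphi|^2}\,\frac{\partial}{\partial t},\qquad L\bigl(e^{ik\varphi}\bigr)=e^{ik\varphi},
\]
and integrate by parts $N$ times; since $\partial_t\varphi$ is bounded away from zero on $\Omega_2$ and $a$ is a symbol of order $n+1$ in $k$, each step gains a factor $k^{-1}$ while the boundary terms vanish thanks to the compact $t$-support of $a$. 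Taking $N$ arbitrarily large yields the desired $O(k^{-\infty})$ estimate. Combining the two regions and summing over the finite partition-of-unity pieces finishes the proof.

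The main obstacle is not the local oscillatory-integral estimate, which is a textbook stationary/non-stationary phase computation, but the global bookkeeping needed to glue the local estimates across different trivializing charts and to handle the case where the source and target points do not lie in a common trivializing open set; this is exactly what is carried out carefully in \cite{HLM22}, and we follow that treatment.
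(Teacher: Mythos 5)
Your within-chart estimate is fine: on $D\times D$ the representation of Theorem~\ref{t-gue240717yyd} together with the lower bound ${\rm Im\,}\varphi+\abs{\partial_t\varphi}^2\geq c\abs{x-y}^2$ from \eqref{e-gue150807b}, the compact $t$-support of the amplitude, and the splitting into the region of exponential decay and the region where one integrates by parts in $t$ is a standard and correct non-stationary phase argument. The genuine gap is in your second step, where you claim the same oscillatory-integral representation for the localized kernel $P_{k,\tau^2,s_j,s_\ell}$ when $x\in D_j$ and $y\in D_\ell$ do not lie in a common trivializing chart, ``via the standard gluing when $D_j\cap D_\ell=\emptyset$, using $k$-smoothness of off-diagonal contributions obtained the same way.'' Theorem~\ref{t-gue240717yyd} is a statement about a single rigid CR trivializing section $s$ on a single (small, BRT-type) chart $D$, and it gives no information whatsoever about $P_{k,\tau^2}(x,y)$ for pairs $(x,y)$ that are not both contained in one such chart; in particular, when $D_j\cap D_\ell=\emptyset$ there is nothing to glue, and the assertion that the cross-chart contribution is $O(k^{-\infty})$ ``obtained the same way'' is exactly (a piece of) the statement of Theorem~\ref{t-gue240813yyd} that you are trying to prove. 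So the argument is circular precisely where the substantive work lies: the ``global bookkeeping'' you defer to \cite{HLM22} is not bookkeeping but the main content.

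For comparison, the paper does not reprove this statement at all; it quotes it from \cite[Theorem 5.5]{HLM22}. The proof there (and in the related arguments of \cite{hhl}) does not proceed by patching local phase representations; the global off-diagonal decay is obtained directly from the structure $P_{k,\tau^2}=F_{k,\tau}\circ\Pi_{k,I}\circ F_{k,\tau}$, the spectral decomposition with respect to $-iT$ (equivalently the underlying torus action), and the microlocal/smoothing properties of the Szeg\H{o} projector away from the diagonal, which is what controls pairs of far-apart points that no rigid trivializing chart contains simultaneously. To repair your proposal you would need to supply an argument of this kind for the cross-chart pairs (for instance, estimating $\chi\,\Pi_{k,I}\,\hat\chi_1$ as a fixed smoothing operator composed with the frequency localization $F_{k,\tau}$, or invoking the relevant global statement of \cite{HLM22} explicitly as the paper does), rather than appealing to Theorem~\ref{t-gue240717yyd} outside its range of validity.
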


Fix $p\in Y$ and let $s$ be a local CR rigid trivializing section of $L$ defined on an open set $D\subset X$. Let $d\mu(g)$ be the Haar measure on $G$ with $\int_Gd\mu(g)=1$. Let $V$ be an open neighborhood of $e_0\in G$ as in Proposition~\ref{prop:coordinates}. We have 
\[P^G_{k,\tau^2}(x,y)=\int_G\chi(g)P_{k,\tau^2}(x,g\cdot y)d\mu(g)+\int_G(1-\chi(g))P_{k,\tau^2}(x,g\cdot y)d\mu(g),\]
where $\chi\in\mathcal{C}^\infty_c(V)$, $\chi=1$ near $e_0$. Since $G$ is freely on $Y$, if $U$ and $V$ are small, there is a constant $c>0$ such that 
\begin{equation}\label{e-gue161231cr}
d(x,g\cdot y)\geq c,\ \ \forall x, y\in U, g\in{\rm Supp\,}(1-\chi),
\end{equation}
where $U\subset D$ is an open set of $p\in Y$ as in Proposition~\ref{prop:coordinates}. From now on, we take $U$ and $V$ small enough so that \eqref{e-gue161231cr} holds. In view of Theorem~\ref{t-gue240813yyd}, we see that $P_{k,\tau^2}(x,y)$ is $k$-negligible  away from diagonal. From this observation and \eqref{e-gue161231cr}, we conclude that $\int_G(1-\chi(g))P_{k,\tau^2}(x,g\cdot y)d\mu(g)=O(k^{-\infty})$ on $U\times U$ and hence 
\begin{equation}\label{e-gue170102}
\mbox{$P^G_{k,\tau^2}(x,y)=\int_G\chi(g)P_{k,\tau^2}(x,g\cdot y)d\mu(g)+O(k^{-\infty})$ on $U\times U$}.
\end{equation}
From Theorem~\ref{t-gue240717yyd} and \eqref{e-gue170102}, we get 
\[P^G_{k,\tau^2,s}(x,\,y)=\int_G\int_{\mathbb{R}}e^{ik\,\varphi(x,g\cdot y,t)}a(x,g\cdot y,t,k)\,\chi(g)\,\mathrm{d}td\mu(g)+ O(k^{-\infty}).\]

Recall that $\varphi$ has the form
\[\varphi(x,y,t)= (x_{2n+1}-y_{2n+1})t+\varphi_0(\mathring{x},\mathring{y},t), \]
where $\mathring{x}=(x_1,\ldots,x_{2n})$, $\varphi(\mathring{x},\mathring{y},t)\in\mathcal{C}^\infty(D\times D\times I)$.
Now, we use the coordinates as in Proposition~\ref{prop:coordinates}. Put $x'=(x_1,\ldots,x_d)$, $x''=(x_{d+1},\ldots,x_{2n+1})$, $\mathring{x}''=(x_{d+1},\ldots,x_{2n})$, $x=(x',\,x'')=(x',\,\hat{x}'',\tilde{x}'')$ where $\hat{x}''=(x_{d+1},\dots,x_{2d})$, and $\tilde{x}''=(x_{2d+1},\dots,x_{2n+1})$. Since $P^G_{k,\tau^2}(x,y)$ is $G$-invariant we have
\[P^G_{k,\tau^2,s}(x,\,y)=P^G_{k,\tau^2,s}((0,x''),\,(\Gamma(\mathring{y}''),y'')) \]
where $\Gamma$ is as in Proposition~\ref{prop:coordinates}.


Now, write $\mathring{x}''=(x_{d+1},\dots,x_{2n})$. Assume that on $V$, we have
\[d\mu(g)=m(v)\,\mathrm{d}v=m(v_1,\dots,\,v_d)\,\mathrm{d}v_1\cdots\mathrm{d}v_d\]
on $V$ and $m$ is a real-valued smooth function on $G$.
From Proposition~\ref{prop:coordinates}, we have
\begin{align*}P_{k,\tau^2,s}^G((0,x''),(\Gamma(\mathring{y}''),y''))=&\int_V\int_{\mathbb{R}} e^{ik\,\varphi((0,x''),(\Gamma(\mathring{y}'')+v,y''),t)}a((0,x''),(\Gamma(\mathring{y}'')+v,y''),t)\,\chi(v)m(v)\,\mathrm{d}t\, \mathrm{d}v \\
	&+ O(k^{-\infty}) 
\end{align*}
on $D\times D$, where $V$ is a small open neighborhood of the identity $e_0$ of $G$. Let $v=(v_1,\dots,\,v_d)$ be the local coordinates of $G$
defined in a neighborhood $V$ of $e$ with $v(e) =(0,\dots,\,0)$. From now on, we will identify the element $g\in V$ with $v(g)$.  By local coordinates of Proposition~\ref{prop:coordinates} and \eqref{eq:phasez}, it is easy to check that 
\begin{equation}\label{e-gue240814ycd}
\det\left(\left(\frac{\partial^2\varphi}{\partial v_\ell\partial v_j}(p,p,t_0)\right)_{j,\,k=1}^{d}\right)= i^d\,\lvert \lambda_1(t_0)\rvert \cdots \lvert\lambda_d(t_0)\rvert \neq 0. \end{equation}
We aim to apply the stationary phase formula of Melin and Sj\"ostrand, see \cite{ms}, but we first need to introduce some notations.

Let $W$ be an open set of $\mathbb R^N$, $N\in\mathbb N$. From now on, we write $W^\mathbb C$ to denote an open set in $\mathbb C^N$ with $W^\mathbb C\bigcap\mathbb R^N=W$ and for $f\in\mathcal{C}^\infty(W)$, from now on, we write $\Td f\in \mathcal{C}^\infty(W^\mathbb C)$ to denote an almost analytic extension of $f$.
For every $t\in I$, let $h(\mathring{x}'',\mathring{y}'',t)\in\mathcal{C}^{\infty}(U\times U,\,\mathbb{C}^d)$ be the solution of the system
	\begin{equation} \label{eq: varphihat}
		\frac{\partial \widetilde{\varphi}_0}{\partial{\tilde{y}}_j}((0,\mathring{x}''),(h(\mathring{x}'',\mathring{y}'',t)+\Gamma(\mathring{y}''),\,\mathring{y}''),\,t)=0 
	\end{equation}
for $j=1,2,\dots,d$. Let us set
\begin{equation} \label{eq:A}
A(x'',\,y'',\,t):=(x_{2n+1}-y_{2n+1})t+\widetilde{\varphi}_0((0,\mathring{x}''),(h(\mathring{x}'',\mathring{y}'',t)+\Gamma(\mathring{y}''),\,\mathring{y}''),\,t) \,,
\end{equation}
it is known that $\mathrm{Im}A \geq 0$, see \cite[p. 147]{ms}. Furthermore, we note that
\begin{align*} 
	&\frac{\partial \varphi_0}{\partial v_j}(x'=v+\Gamma(y'')=0,\,\hat{x}''=\hat{y}''=0,\,\widetilde{x}''=\widetilde{y}'',\,t)\\
 &= \left\langle2\,\mathrm{Im}\overline{\partial}_b\Phi(x)-t\,\omega_0(x),\,\frac{\partial}{\partial x_j}\right\rangle=0,
\end{align*}
for every $j=0,\dots,\,d$ where $x=(0,(0,\,\widetilde{x}''))$. Hence, for every $t\in I$ the critical points are $\hat{x}''=\hat{y}''=0,\,\widetilde{x}''=\widetilde{y}'',\,x'=v+\Gamma(y'')=0$ and we find that
\begin{equation}\label{e-gue240909ycd}
\begin{split}
&\mathrm{d}_xA(x''=y'',\,\hat{x}''=0,\,t)=-2\,\mathrm{Im}\overline{\partial}_b\Phi(x)+t\,\omega_0(x),\\
&\mathrm{d}_yA(x''=y'',\,\hat{x}''=0,\,t)=2\,\mathrm{Im}\overline{\partial}_b\Phi(x)-t\,\omega_0(x),\\
&A(x''=y'',\,\hat{x}''=0,\,t)=0.
\end{split}\end{equation}

Thus, we can now use the stationary phase formula of Melin and Sj\"ostrand, we can carry out the $v$ integral and get
\begin{equation}\label{e-gue240903yyd}P_{k,\tau^2,s}^G(x,y) \equiv  \int_{\mathbb{R}} e^{ikA(x'',y'',t)}b(x'',y'',t,k)dt + O(k^{-\infty})\ \ \mbox{on $U\times U$},\end{equation}
$p\in U\subset D$, $U$ is an open set as in Proposition~\ref{prop:coordinates}, 
\begin{equation}\label{e-gue150807bIz}
\begin{split}
&b(x'',y'',t,k)\sim\sum^\infty_{j=0}b_j(x'',y'',t)k^{n+1-\frac{d}{2}-j}\text{ in }S^{n+1-\frac{d}{2}}_{{\rm loc\,}}
(1;U\times U\times I), \\
&b_j(x'',y'',t)\in\mathcal{C}^\infty(U\times U\times I),\ \ j=0,1,\ldots,\\
&b(x'',y'',t,k)=b_j(x'',y'',t)=0\ \ \mbox{if $t\notin I$, $j=0,1,\ldots$},
\end{split}\end{equation}
and 
\begin{equation}\label{e-gue240814yyd}
b_0(p,p,t_0)=m(0)(2\pi)^{-n-1+\frac{d}{2}}\abs{\lambda_1(t_0)}^{\frac{1}{2}}\cdots\abs{\lambda_d(t_0)}^{\frac{1}{2}}\abs{\lambda_{d+1}(t_0)}\cdots\abs{\lambda_n(t_0)}\tau^2(t_0).
\end{equation} 

We now study the property of the phase $A(x'',y'',t)$. We need the following which is known (see Section 2 in~\cite{ms})

\begin{thm}\label{t-gue170105}
There exist a constant $c>0$ and an open set $\Omega\subset\mathbb R^d$, $0\in\Omega$, such that  
\begin{equation}\label{e-gue160105r}
{\rm Im\,}A(x'',y'',t)\geq c\inf_{v\in\Omega}\set{{\rm Im\,}\varphi_0((0,\mathring{x}''),(v+\Gamma(\mathring{y}''),\mathring{y}''),t)+\abs{d_v\varphi_0((0,\mathring{x}''),(v+\Gamma(\mathring{y}''),\mathring{y}''),t)}^2},
\end{equation}
for all $((0,x''),(0,y''),t)\in U\times U\times I$.
\end{thm}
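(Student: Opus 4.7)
The plan is to recognize the inequality as a direct instance of the standard Melin--Sj\"ostrand comparison estimate for complex-phase stationary phase \cite{ms}, applied to the $v$-integration that produced $A$ in \eqref{e-gue240903yyd}. For fixed $(x'',y'',t)$ close to $(0,0,t_{0})$, set
\[
\Psi(v) := \varphi_{0}\bigl((0,\mathring{x}''),\,(v+\Gamma(\mathring{y}''),\mathring{y}''),\,t\bigr),\qquad v\in\Omega\subset \mathbb R^{d},
\]
so that $\Psi$ is a phase function in $v$ with ${\rm Im\,}\Psi\geq 0$ and smooth dependence on the parameters. Let $\widetilde\Psi$ be its almost analytic extension to a complex neighborhood of $\Omega$. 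The critical point equation $\partial_{v}\widetilde\Psi=0$ coincides with \eqref{eq: varphihat}, and the implicit function theorem produces its unique almost analytic solution $v^{*}=h(\mathring{x}'',\mathring{y}'',t)\in\mathbb{C}^{d}$, provided the Hessian is non-degenerate. This is exactly the content of \eqref{e-gue240814ycd}: $\det\partial_{v}^{2}\Psi|_{0}=i^{d}\prod_{j=1}^{d}|\lambda_{j}(t_{0})|\neq 0$. By the defining formula \eqref{eq:A}, ${\rm Im\,}A(x'',y'',t)={\rm Im\,}\widetilde\Psi(v^{*})$.

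With this identification the theorem reduces to the pointwise inequality
\[
{\rm Im\,}\widetilde\Psi(v^{*}) \;\geq\; c\,\inf_{v\in\Omega}\Bigl({\rm Im\,}\Psi(v)+|\partial_{v}\Psi(v)|^{2}\Bigr),
\]
which is \cite[Section 2]{ms}. I would reproduce its proof by Taylor expansion. Writing $v^{*}=v^{*}_{R}+iv^{*}_{I}$ and $H:=\partial_{v}^{2}\widetilde\Psi(v^{*})$, the equation $\partial_{v}\widetilde\Psi(v^{*})=0$ together with almost analyticity gives, for real $v$ near $v^{*}_{R}$,
\[
\partial_{v}\Psi(v) = H(v-v^{*}) + O(|v-v^{*}|^{2}),
\]
\[
\widetilde\Psi(v) = \widetilde\Psi(v^{*}) + \tfrac{1}{2}\langle H(v-v^{*}),\,v-v^{*}\rangle + O(|v-v^{*}|^{3}).
\]
Evaluating at $v=v^{*}_{R}$ (so $v-v^{*}=-iv^{*}_{I}$) and taking imaginary parts, the positive definiteness of the real quadratic form $u\mapsto{\rm Im\,}\langle Hu,u\rangle$ on $\mathbb R^{d}$---which follows from Assumption~\ref{ass:1} via the computation underlying \eqref{e-gue240814ycd}---yields both $|v^{*}_{I}|^{2}\leq C\,{\rm Im\,}\widetilde\Psi(v^{*})$ and
\[
{\rm Im\,}\Psi(v^{*}_{R}) + |\partial_{v}\Psi(v^{*}_{R})|^{2} \;\leq\; C'\bigl({\rm Im\,}\widetilde\Psi(v^{*}) + |v^{*}_{I}|^{2}\bigr) \;\leq\; C''\,{\rm Im\,}\widetilde\Psi(v^{*}).
\]
Since the infimum is bounded above by the value at $v=v^{*}_{R}$, the claimed estimate follows with $c:=1/C''$.

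The step I expect to be the most delicate is the uniform control of the cubic remainder against the positive-definite quadratic term: one must shrink $U$ and $\Omega$ so that $|v^{*}_{I}|$ is small enough for the cubic $O(|v-v^{*}|^{3})$ to be absorbed. Compactness of $\overline{I}$ together with Assumption~\ref{ass:1} guarantees a uniform positivity constant for ${\rm Im\,}H$ across $t\in I$, and continuity in the parameters $(x'',y'')$ allows the same constants to work on a neighborhood of the base point $(p,p,t_{0})$. Once this uniformity is arranged, \eqref{e-gue160105r} holds on $U\times U\times I$ as claimed.
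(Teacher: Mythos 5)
Your proposal is correct and takes essentially the same route as the paper: the paper offers no argument of its own for this theorem but simply quotes the comparison estimate from Section 2 of Melin--Sj\"ostrand \cite{ms}, which is exactly the statement you identify and whose standard Taylor-expansion proof at the almost analytic critical point you reproduce (nondegeneracy from \eqref{e-gue240814ycd}, positivity of ${\rm Im\,}\langle Hu,u\rangle$, absorption of cubic remainders). The only cosmetic caveat is that the almost analytic critical point solves $\partial_v\widetilde\Psi(v^*)=0$ only modulo errors vanishing to infinite order in $\abs{{\rm Im\,}v^*}$, but these flat errors are absorbed by the same smallness argument you already use for the cubic terms.
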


We can now prove 

\begin{thm}\label{t-gue170105I}
If $U$ is small enough, then there is a constant $c>0$ such that 
\begin{equation}\label{e-gue170106m}
{\rm Im\,}A(x'',y'',t)\geq c\Bigr(\abs{\hat x''}^2+\abs{\hat y''}^2+\abs{\mathring{x}''-\mathring{y}''}^2\Bigr),\ \ \forall ((0,x''),(0,y''))\in U\times U.
\end{equation}
\end{thm}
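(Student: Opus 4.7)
By Theorem~\ref{t-gue170105}, it suffices to show that, for $U$ small enough and $((0,x''),(0,y''))\in U\times U$,
\[
\inf_{v\in\Omega}\Bigl\{{\rm Im\,}\varphi_0\bigl((0,\mathring{x}''),(v+\Gamma(\mathring{y}''),\mathring{y}''),t\bigr)+\abs{d_v\varphi_0\bigl((0,\mathring{x}''),(v+\Gamma(\mathring{y}''),\mathring{y}''),t\bigr)}^2\Bigr\}\geq c\bigl(\abs{\hat x''}^2+\abs{\hat y''}^2+\abs{\mathring{x}''-\mathring{y}''}^2\bigr).
\]
The plan is to derive two complementary quadratic lower bounds, one on each summand inside the infimum, and then to combine them.

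The first bound uses the global inequality ${\rm Im\,}\varphi\geq c|z-w|^2$ from Theorem~\ref{t-gue240717yyd}. With $x'=0$ and $y'=v+\Gamma(\mathring{y}'')$ in the coordinates of Proposition~\ref{prop:coordinates}, a direct computation of the holomorphic differences gives $|z-w|^2=|v+\Gamma(\mathring{y}'')|^2+|\mathring{x}''-\mathring{y}''|^2$, so
\[
{\rm Im\,}\varphi_0\geq c_1\bigl(\abs{v+\Gamma(\mathring{y}'')}^2+\abs{\mathring{x}''-\mathring{y}''}^2\bigr).
\]
This already controls $|\mathring{x}''-\mathring{y}''|^2$, but it captures only $|\hat x''-\hat y''|^2$ inside it rather than $|\hat x''|^2+|\hat y''|^2$ separately, which is the extra strength that must come from the moment map.

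The second bound controls $|d_v\varphi_0|^2$. From \eqref{e-gue240909ycd} together with the definitions of $\gamma_\Phi$ and $\mu$ one has the moment-map identity $\partial_{y_j}\varphi_0(x,x,t)=\tfrac{1}{2}\langle\hat\mu_t(x),\xi_j\rangle$ for $j=1,\ldots,d$. Rather than Taylor-expanding $\partial_{y_j}\varphi_0(x_0,y_0,t)$ about $x_0$ or about $y_0$, I expand about the \emph{midpoint} $u=(x_0+y_0)/2$:
\[
\partial_{y_j}\varphi_0(x_0,y_0,t)=\tfrac{1}{2}\langle\hat\mu_t(u),\xi_j\rangle+O(\abs{x_0-y_0}).
\]
By Assumption~\ref{a-gue240720yyd}, zero is a regular value of $\hat\mu_t$, and since $Y=\{\hat x''=0\}$ in our coordinates, $|\hat\mu_t(u)|\geq c\,|\hat u''|=\tfrac{c}{2}|\hat x''+\hat y''|$ near $p$. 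Squaring and summing over $j$, and using $|x_0-y_0|^2=|v+\Gamma(\mathring{y}'')|^2+|\mathring{x}''-\mathring{y}''|^2$, yields for $U$ small
\[
\abs{d_v\varphi_0}^2\geq c_2\abs{\hat x''+\hat y''}^2-C\bigl(\abs{v+\Gamma(\mathring{y}'')}^2+\abs{\mathring{x}''-\mathring{y}''}^2\bigr).
\]

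Adding a sufficiently large multiple of the first bound to the second absorbs the negative error and produces, for some $c_3>0$,
\[
{\rm Im\,}\varphi_0+\abs{d_v\varphi_0}^2\geq c_3\bigl(\abs{v+\Gamma(\mathring{y}'')}^2+\abs{\mathring{x}''-\mathring{y}''}^2+\abs{\hat x''+\hat y''}^2\bigr).
\]
Since $\abs{\mathring{x}''-\mathring{y}''}^2\geq\abs{\hat x''-\hat y''}^2$ and $\abs{\hat x''-\hat y''}^2+\abs{\hat x''+\hat y''}^2=2(\abs{\hat x''}^2+\abs{\hat y''}^2)$, this immediately dominates $\abs{v+\Gamma(\mathring{y}'')}^2+\abs{\hat x''}^2+\abs{\hat y''}^2+\abs{\mathring{x}''-\mathring{y}''}^2$ up to a constant. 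Taking the infimum over $v\in\Omega$ removes the $|v+\Gamma|^2$ term and Theorem~\ref{t-gue170105} yields the desired bound. The main subtlety is the choice of expansion point for $|d_v\varphi_0|^2$: expanding about $x_0$ or $y_0$ separately only provides control of $|\hat x''|$ or $|\hat y''|$, and the attendant $O(|x_0-y_0|^2)$ error contains $|\hat x''-\hat y''|^2\leq 2(|\hat x''|^2+|\hat y''|^2)$ which is too large to be absorbed; expanding about the midpoint produces exactly the combination $|\hat x''+\hat y''|$ that complements the $|\hat x''-\hat y''|$ coming from ${\rm Im\,}\varphi_0$.
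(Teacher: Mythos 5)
Your proposal is correct and shares the paper's skeleton: both proofs reduce the statement to a lower bound on $\inf_{v\in\Omega}\{{\rm Im\,}\varphi_0+\abs{d_v\varphi_0}^2\}$ via Theorem~\ref{t-gue170105}, and both use \eqref{e-gue150807b} to get ${\rm Im\,}\varphi_0\geq c_1\bigl(\abs{v+\Gamma(\mathring{y}'')}^2+\abs{\mathring{x}''-\mathring{y}''}^2\bigr)$. Where you genuinely diverge is in producing the transverse term. The paper evaluates $d_{y'}\varphi_0$ at the diagonal point $\bigl((0,\mathring{x}''),(0,\mathring{x}''),t\bigr)$, observes that it vanishes when $\hat x''=0$, and deduces $\abs{d_{y'}\varphi_0}\geq c_3\abs{\hat x''}$ from the nondegeneracy of the $d\times d$ Hessian block coming from \eqref{e-gue240814ycd}, i.e.\ from the positivity $\lambda_1(t)\cdots\lambda_d(t)\neq0$ of $R^L-2t\mathcal{L}$ along $\underline{\mathfrak{g}}$; the full estimate then follows since $\abs{\hat y''}^2\lesssim\abs{\hat x''}^2+\abs{\mathring{x}''-\mathring{y}''}^2$. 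You instead use the identity $d_{y'}\varphi_0(x,x,t)=\tfrac12\hat\mu_t(x)$ (this follows from \eqref{e-gue150807b} together with the definitions of $\gamma$ and $\mu$; \eqref{e-gue240909ycd} is only the corresponding statement for $A$ on $Y$, so the citation should be \eqref{e-gue150807b}) combined with the regular-value part of Assumption~\ref{a-gue240720yyd}, expanding about the midpoint to produce $\abs{\hat x''+\hat y''}$. Both justifications are legitimate here: yours makes the role of the moment-map hypothesis explicit, while the paper's stays inside the phase-function calculus it has already set up. As in the paper, uniformity of constants in $t$ and the comparison between the coordinates of Theorem~\ref{thm:hml} and Proposition~\ref{prop:coordinates} are left implicit; your level of detail matches the paper's there.

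One correction to your closing remark: expanding about $x_0$ (or $y_0$) does not fail. The resulting error is $O\bigl(\abs{v+\Gamma(\mathring{y}'')}^2+\abs{\mathring{x}''-\mathring{y}''}^2\bigr)$, which is absorbed by the ${\rm Im\,}\varphi_0$ bound exactly as in your midpoint version (take a small weight on $\abs{d_v\varphi_0}^2$), and the missing $\abs{\hat y''}^2$ is then recovered from $\abs{\hat y''}\leq\abs{\hat x''}+\abs{\hat x''-\hat y''}\leq\abs{\hat x''}+\abs{\mathring{x}''-\mathring{y}''}$. This is in effect the paper's route, so the midpoint device is a harmless but unnecessary refinement rather than the crux of the argument.
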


\begin{proof}
From \eqref{e-gue150807b}, we see that there is a constant $c_1>0$ such that 
\begin{equation}\label{e-gue170106}
{\rm Im\,}\varphi_0((0,\mathring{x}''),(v+\Gamma(\mathring{y}''),\mathring{y}''),t)\geq c_1(\abs{v+\Gamma(\mathring{y}'')}^2+\abs{\mathring{x}''-\mathring{y}''}^2),\ \ \forall v\in\Omega,
\end{equation}
where $\Omega$ is any open set of $0\in\mathbb R^d$. From \eqref{e-gue160105r} and \eqref{e-gue170106}, we conclude that there is a constant $c_2>0$ such that 
\begin{equation}\label{e-gue170106I}
{\rm Im\,}A(x'',y'',t)\geq c_2(\abs{\mathring{x}''-\mathring{y}''}^2+\abs{d_{y'}\varphi_0((0,\mathring{x}''),(0,\mathring{x}''))}^2).
\end{equation}
From \eqref{e-gue240814ycd}, we see that the matrix 
\[\abs{{\rm det\,}\left(\frac{\pr^2\varphi_0}{\pr x_j\pr x_\ell}(p,p,t)+\frac{\pr^2\varphi_0}{\pr y_j\pr y_\ell}(p,p,t)\right)_{1\leq \ell\leq d, d+1\leq j\leq 2d}}\geq C,\]
for all $t\in I$, where $C>0$ is independent of $t$. From this observation and notice that $d_{y'}\varphi_0((0,\mathring{x}''),(0,\mathring{x}''),t)|_{\hat x''}=0$, we deduce that if $U$ is small enough then there is a constant 
$c_3>0$ such that 
\begin{equation}\label{e-gue170106II}
\abs{d_{y'}\varphi_0((0,\mathring{x}''),(0,\mathring{x}''),t)}\geq c_3\abs{\hat x''},
\end{equation}
for all $t\in I$. 
From \eqref{e-gue170106I} and \eqref{e-gue170106II}, the theorem follows. 
\end{proof}

Now, we determine the Hessian of $A(x'',y'',t)$ at $(p,p)$. Let \[\hat{h}(\mathring{x}'',\mathring{y}'',t):=h(\mathring{x}'',\mathring{y}'',t)+\Gamma(\mathring{y}'')=(\hat h_1(\mathring{x}'',\mathring{y}'',t),\ldots,\hat h_d(\mathring{x}'',\mathring{y}'',t)).\]  By \eqref{eq: varphihat}, we get
\begin{equation} \label{eq:hath} 
\begin{split}
&\frac{\partial^2\varphi_0}{\partial y_{s}\partial y_1}(p,p,t_0)+\sum_{j=1}^d\frac{\partial^2 \varphi_0}{\partial y_1\,\partial y_j}(p,p,t_0)\,\frac{\partial \hat{h}_j}{\partial y_{s}}(p,p,t_0)=0,\\
&\frac{\partial^2\varphi_0}{\partial x_{s}\partial y_1}(p,p,t_0)+\sum_{j=1}^d\frac{\partial^2 \varphi_0}{\partial y_1\,\partial y_j}(p,p,t_0)\,\frac{\partial \hat{h}_j}{\partial x_{s}}(p,p,t_0)=0,
\end{split}
\end{equation}
$s=d+1,\ldots,2n$.
Now, we would like to use \eqref{eq:hath} to get $\hat{h}$ up to second order. From \eqref{eq:phasez} and by explicit computation, we get 
\begin{equation}\label{e-gue240822yyd}
\frac{\partial^2 \varphi_0}{\partial y_1 \partial x_{s}}(p,p,t_0)=\lambda_1(t_0)\delta_{s,d+1},\end{equation}
$s=d+1,\ldots,2n$,
and 
\begin{equation}\label{e-gue240820yyd}
\begin{split}
&\frac{\partial^2 \varphi_0}{\partial y_1 \partial y_{j}}(p,p,t_0)=i(a_{1,j}+a_{j,1})-i(\ol{a_{1,j}}+\ol{a_{j,1}})+\delta_{j,1}i\lambda_1(t_0),\ \ j=1,\ldots,d,\\
&\frac{\partial^2 \varphi_0}{\partial y_1 \partial y_{d+s}}(p,p,t_0)=-(a_{1,s}+a_{s,1})-(\ol{a_{1,s}}+\ol{a_{s,1}})+
\frac{t_0}{2}(\tau_{s,1}+\ol{\tau_{s,1}})+
\frac{t_0}{2}(\tau_{1,s}+\ol{\tau_{1,s}}),\ \ s=1,\ldots,d,\\
&\frac{\partial^2 \varphi_0}{\partial y_1 \partial y_{2\ell-1}}(p,p,t_0)=i(a_{1,\ell}+a_{\ell,1})-i(\ol{a_{1,\ell}}+\ol{a_{\ell,1}})+
\frac{i}{2}t_0\ol{\tau_{\ell,1}}-
\frac{i}{2}t_0\tau_{\ell,1}+\frac{i}{2}t_0\tau_{1,\ell}-\frac{i}{2}t_0\ol{\tau_{1,\ell}},\ \ \ell=d+1,\ldots,n,\\
&\frac{\partial^2 \varphi_0}{\partial y_1 \partial y_{2\ell}}(p,p,t_0)=-(a_{1,\ell}+a_{\ell,1})-(\ol{a_{1,\ell}}+\ol{a_{\ell,1}})+
\frac{1}{2}t_0\ol{\tau_{\ell,1}}+
\frac{1}{2}t_0\tau_{1,\ell}+\frac{1}{2}t_0\ol{\tau_{1,\ell}}+\frac{1}{2}t_0\tau_{\ell,1},\ \ \ell=d+1,\ldots,n.
\end{split}
\end{equation}
From \eqref{e-gue240830yyd} and \eqref{e-gue240830yyda}, we have 

\begin{equation}\label{e-gue240901ycd}
\begin{split}
&a_{1,j}+a_{j,1}=-\frac{1}{2}\mu_{1,j}=-\frac{1}{2}\mu_{j,1}=-\frac{1}{2}\ol{\mu_{1,j}},\ \ 
 j=1,\ldots,d,\\
&a_{1,\ell}+a_{\ell,1}=-\frac{1}{2}\mu_{\ell,1}=\frac{t_0}{2}(\ol{\tau_{1,\ell}}+\tau_{\ell,1}),\ \ \ell=d+1,\ldots,n.
\end{split}
\end{equation}
From  \eqref{e-gue240830yyd}, \eqref{e-gue240830yyda}, \eqref{e-gue240820yyd} and \eqref{e-gue240901ycd}, it is straigtforward to see that 

\begin{equation}\label{e-gue240902yyd}
\begin{split}
&\frac{\partial^2 \varphi_0}{\partial y_1 \partial y_{j}}(p,p,t_0)=\delta_{j,1}i\lambda_1(t_0),\ \ j=1,\ldots,d,\\
&\frac{\partial^2 \varphi_0}{\partial y_1 \partial y_{d+s}}(p,p,t_0)=\lambda_1(t_0)\delta_{1,s},\ \ s=1,\ldots,d,\\
&\frac{\partial^2 \varphi_0}{\partial y_1 \partial y_{2\ell-1}}(p,p,t_0)=0,\ \ \ell=d+1,\ldots,n,\\
&\frac{\partial^2 \varphi_0}{\partial y_1 \partial y_{2\ell}}(p,p,t_0)=0,\ \ \ell=d+1,\ldots,n.
\end{split}
\end{equation}

From \eqref{eq:hath}, \eqref{e-gue240822yyd} and \eqref{e-gue240902yyd}, we get 
\begin{equation}\label{e-gue240822yydI}
\frac{\pr\hat h_1}{\pr y_s}(p,p,t_0)=\frac{\pr\hat h_1}{\pr x_s}(p,p,t_0)=i\delta_{s,d+1},\ \ s=d+1,\ldots,2n.
\end{equation}

In a similar way, we repeat the procedure above and get
\begin{equation}\label{e-gue240822yydIII}\frac{\partial\hat h_j}{\partial x_s}(p,p,t_0)=
\frac{\partial\hat h_j}{\partial y_s}(p,p,t_0) = i\delta_{s,d+j}, \end{equation}
$j=1,\ldots,d$, $s=1,\ldots,2n$.

By \eqref{eq:phasez}, \eqref{eq:A} and \eqref{e-gue240822yydIII}, it is straigtforward to check that 

\begin{thm}\label{t-gue240903yyd}
With the notations above, let $x=(x_1,\ldots,x_{2n+1})$ be the local coordinates as in Proposition~\ref{prop:coordinates}. Then for $A(x'',y'',t)\in\mathcal{C}^\infty(U\times U\times I)$ in \eqref{eq:A}, we have
\begin{equation}\label{eq:phaseA}
\begin{split}
	A(x'',y'',t_0) = & \,t_0(x_{2n-1}-y_{2n-1})+\frac{i}{2}\sum_{j=1}^{d}\lambda_j(t_0)(x_{d+j}^2+y_{d+j}^2)\\
 &+\frac{i}{2}\sum^d_{j,\ell=1}(a_{\ell,j}+a_{j,\ell})(x_{d+j}x_{d+\ell}-y_{d+j}y_{d+\ell})
 -\frac{i}{2}\sum^d_{j,\ell=1}(\ol{a_{\ell,j}}+\ol{a_{j,\ell}})(x_{d+j}x_{d+\ell}-y_{d+j}y_{d+\ell})\\
 &+\frac{i}{2}t_0\sum^d_{j,\ell=1}(\ol{\tau_{\ell,j}}-\tau_{j,\ell})(x_{d+j}x_{d+\ell}-y_{d+j}y_{d+\ell})\\
 &-\frac{i}{2}\sum^d_{j=1}\sum^n_{\ell=d+1}(a_{\ell,j}+a_{j,\ell})(ix_{d+j}z_\ell-iy_{d+j}w_\ell)-\frac{i}{2}\sum^n_{j=d+1}\sum^d_{\ell=1}(a_{\ell,j}+a_{j,\ell})(iz_jx_{d+j}-iw_jy_{d+\ell})\\
 &-\frac{i}{2}\sum^n_{j,\ell=d+1}(a_{\ell,j}+a_{j,\ell})(z_jz_\ell-w_jw_\ell)\\
 &+\frac{i}{2}\sum^d_{j=1}\sum^n_{\ell=d+1}(\ol{a_{\ell,j}}+\ol{a_{j,\ell}})(-ix_{d+j}\ol z_\ell+iy_{d+j}\ol w_\ell)+\frac{i}{2}\sum^n_{j=d+1}\sum^d_{\ell=1}(\ol{a_{\ell,j}}+\ol{a_{j,\ell}})(-i\ol z_jx_{d+j}+i\ol w_jy_{d+\ell})\\
 &+\frac{i}{2}\sum^n_{j,\ell=d+1}(\ol a_{\ell,j}+\ol a_{j,\ell})(\ol z_j\ol z_\ell-\ol w_j\ol w_\ell)\\
 &+\frac{it_0}{2}\sum^d_{j=1}\sum^n_{\ell=d+1}(\ol{\tau_{\ell,j}}-\tau_{j,\ell})(ix_{d+j}\ol z_\ell-iy_{d+j}\ol w_\ell)\\
 &+\frac{it_0}{2}\sum^n_{j=d+1}\sum^d_{\ell=1}(\ol{\tau_{\ell,j}}-\tau_{j,\ell})(-ix_{d+\ell}z_j+iy_{d+\ell}w_j)\\
 &+\frac{it_0}{2}\sum^n_{j,\ell=d+1}(\ol{\tau_{\ell,j}}-\tau_{j,\ell})(z_j\ol z_\ell-w_j\ol w_\ell)\\
 &-\frac{i}{2}\sum_{j=d+1}^{n-1}\lambda_j(t_0)({z}_j\overline{w}_j-\overline{z}_j{w}_j)\\
	&+\frac{i}{2}\sum_{j=d+1}^{n}\lambda_j(t_0)\lvert {z}_j-{w}_j\rvert^2+O(\lvert (\mathring{x}'',\mathring{y}'')\rvert^3),
\end{split}
\end{equation}
where $x''=(x_{d+1},\ldots,x_{2n+1})$, $\mathring{x}''=(x_{d+1},\ldots,x_{2n})$
\end{thm}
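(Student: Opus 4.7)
\medskip

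\noindent\textbf{Proof proposal for Theorem~\ref{t-gue240903yyd}.}

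The plan is to obtain the Taylor expansion of $A(x'',y'',t_0)$ to order two by substituting the expansion of $\hat h(\mathring x'',\mathring y'',t_0)$ into the defining formula \eqref{eq:A}. Since $A(x'',y'',t_0)=(x_{2n+1}-y_{2n+1})t_0+\widetilde{\varphi}_0\bigl((0,\mathring x''),(\hat h(\mathring x'',\mathring y'',t_0),\mathring y''),t_0\bigr)$, the task reduces to evaluating $\widetilde{\varphi}_0$ along the curve $y'=\hat h$ up to cubic remainder in $(\mathring x'',\mathring y'')$.

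The first step is to pin down $\hat h$ modulo $O(|(\mathring x'',\mathring y'')|^2)$. Since $\hat h(0,0,t_0)=0$ (the unique critical point when $\mathring x''=\mathring y''=0$) and the linearization at $(p,p,t_0)$ has already been computed in \eqref{e-gue240822yydIII}, we obtain
\begin{equation*}
\hat h_j(\mathring x'',\mathring y'',t_0)= i\,x_{d+j}+i\,y_{d+j}+O(|(\mathring x'',\mathring y'')|^2),\qquad j=1,\ldots,d.
\end{equation*}
The quadratic remainder is irrelevant for the expansion of $A$ at order two because $\mathrm{d}_{y'}\widetilde{\varphi}_0=0$ at the critical point, by the very definition of $h$, so any variation of $\hat h$ of order $O(|(\mathring x'',\mathring y'')|^2)$ contributes only at cubic order to $A$. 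This means that for the purpose of Theorem~\ref{t-gue240903yyd} we may replace $\hat h_j$ by its linear part $i(x_{d+j}+y_{d+j})$.

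Next, I substitute $x'=0$ and $y'=(i(x_{d+1}+y_{d+1}),\ldots,i(x_{2d}+y_{2d}))$ into the explicit formula \eqref{eq:phasez} for $\varphi_0$. In the complex coordinates $z_j,w_j$ of Proposition~\ref{prop:coordinates}, this substitution gives $z_j=ix_{d+j}$ and $w_j=ix_{d+j}+2iy_{d+j}$ for $j=1,\ldots,d$, while $z_j,w_j$ are unchanged for $j=d+1,\ldots,n$. Plugging these into each of the blocks appearing in \eqref{eq:phasez} yields, after grouping by type:
\begin{itemize}
\item the holomorphic quadratic blocks $\sum (a_{\ell,j}+a_{j,\ell})(z_jz_\ell-w_jw_\ell)$ split into three families (both indices in $\{1,\ldots,d\}$, mixed, both in $\{d+1,\ldots,n\}$) and their antiholomorphic conjugates;
\item the mixed $z\bar z-w\bar w$ block carrying the coefficient $\tau$ splits similarly;
\item the $\lambda_j$ diagonal block contributes $\tfrac{i}{2}\sum_{j=1}^d\lambda_j(t_0)((ix_{d+j})\overline{(ix_{d+j}+2iy_{d+j})}-\overline{(ix_{d+j})}(ix_{d+j}+2iy_{d+j}))$ which vanishes identically, together with $\tfrac{i}{2}\sum_{j=1}^d\lambda_j(t_0)|ix_{d+j}-(ix_{d+j}+2iy_{d+j})|^2=\tfrac{i}{2}\sum_{j=1}^d\lambda_j(t_0)(2y_{d+j})^2$.
\end{itemize}
At the same time the additional quadratic terms on the last two lines of \eqref{eq:phasez} (the $\tau$-deformations coming from \eqref{e-gue240826yydII}) are evaluated; when $y'$ is replaced by $\hat h$, these combine with the $\lambda_j$-block above to produce the expected contribution $\tfrac{i}{2}\sum_{j=1}^d\lambda_j(t_0)(x_{d+j}^2+y_{d+j}^2)$ after using \eqref{e-gue240830yyd} to combine $\mu_{j,\ell}$ with $t_0(\overline{\tau_{\ell,j}}+\tau_{j,\ell})$.

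The final step is to collect all remaining mixed contributions involving one index in $\{1,\ldots,d\}$ and one in $\{d+1,\ldots,n\}$, and the purely transverse block indexed in $\{d+1,\ldots,n\}$. These appear in \eqref{eq:phaseA} in the exact form they have in \eqref{eq:phasez}; the only thing that changes is that $z_j,w_j$ for $j\leq d$ have been replaced by $ix_{d+j}$ and $ix_{d+j}+2iy_{d+j}$ respectively. The principal obstacle is purely bookkeeping: one must keep track of the six families of quadratic terms (holomorphic, antiholomorphic and mixed, in each case with both indices transverse, both indices in $\{1,\ldots,d\}$, or one of each) and apply the identities \eqref{e-gue240830yyd}, \eqref{e-gue240830yyda}, and \eqref{e-gue240901ycd} to cancel those terms involving $\mu_{j,\ell}$ for indices outside the transverse block. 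Once this bookkeeping is carried out, the expression \eqref{eq:phaseA} is obtained modulo $O(|(\mathring x'',\mathring y'')|^3)$, which is exactly the claim.
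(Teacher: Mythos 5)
Your overall strategy coincides with the paper's: take the linearization $\hat h_j=i(x_{d+j}+y_{d+j})+O(|(\mathring x'',\mathring y'')|^2)$ from \eqref{e-gue240822yydIII}, observe that the quadratic remainder of $\hat h$ only affects $A$ beyond second order because $d_{y'}\widetilde\varphi_0$ vanishes exactly at the critical point (this reduction step of yours is correct), and then substitute into the explicit quadratic phase \eqref{eq:phasez} and collect terms using \eqref{e-gue240830yyd}, \eqref{e-gue240830yyda}, \eqref{e-gue240901ycd}. The paper's proof is exactly this computation, stated tersely. However, your execution of the substitution contains a genuine error. The critical point $\hat h$ is complex, so by \eqref{eq:A} the value of the phase must be computed from the almost analytic extension $\widetilde\varphi_0$: in the real-coordinate expressions $w_j=y_j+iy_{d+j}$, $\overline w_j=y_j-iy_{d+j}$, $|z_j-w_j|^2=(z_j-w_j)(\overline z_j-\overline w_j)$ one replaces $y_j$ by the complex value $\hat h_j$ \emph{holomorphically}; one is not allowed to take complex conjugates or moduli of the substituted complex numbers. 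Hence for $j\le d$ the correct substitutions are $z_j\mapsto ix_{d+j}$, $w_j\mapsto ix_{d+j}+2iy_{d+j}$, $\overline z_j\mapsto -ix_{d+j}$, and, crucially, $\overline w_j\mapsto \hat h_j-iy_{d+j}=ix_{d+j}$, \emph{not} $\overline{ix_{d+j}+2iy_{d+j}}=-ix_{d+j}-2iy_{d+j}$ as you use.

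Because of this, your two explicit evaluations of the diagonal $\lambda_j$-block are wrong: correctly, $z_j\overline w_j-\overline z_j w_j\mapsto (ix_{d+j})(ix_{d+j})-(-ix_{d+j})(ix_{d+j}+2iy_{d+j})=-2x_{d+j}^2-2x_{d+j}y_{d+j}$, which does not vanish, and $|z_j-w_j|^2\mapsto(-2iy_{d+j})(-2ix_{d+j})=-4x_{d+j}y_{d+j}$, not $(2y_{d+j})^2$. The same faulty rule contaminates every block of \eqref{eq:phasez} containing a conjugated second-argument variable with index $\le d$ (the $\overline z_j\overline z_\ell-\overline w_j\overline w_\ell$, $z_j\overline z_\ell-w_j\overline w_\ell$ and mixed $\tau$-blocks), so the final bookkeeping as you describe it will not reproduce \eqref{eq:phaseA}; note for instance that your version of the diagonal block is degenerate in $x_{d+j}$, whereas \eqref{eq:phaseA} carries the symmetric term $\frac{i}{2}\lambda_j(t_0)(x_{d+j}^2+y_{d+j}^2)$, and the surviving $a$- and $\tau$-terms in \eqref{eq:phaseA} for indices $\le d$ come out of the correct substitution combined with \eqref{e-gue240830yyd}--\eqref{e-gue240901ycd}. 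To repair the argument, keep your reduction to the linear part of $\hat h$, but redo the substitution with the almost analytic rule above; the rest of the plan then goes through as in the paper.
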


From \eqref{e-gue240903yyd} and Theorem~\ref{t-gue240903yyd}, we get \eqref{e-gue240903yydI}. 

Now, we prove \eqref{e-gue240903yydII}. We need 

\begin{lem}\label{l-gue170110}
Let $p\notin\mu^{-1}(0)$. Then, there are open sets $U$ of $p$ and $V$ of $e_0\in G$ such that for any $\chi\in \mathcal{C}^\infty_c(V)$, we have 
\begin{equation}\label{e-gue170110}
\int_GP_{k,\tau^2}(x,g\cdot y)\chi(g)d\mu(g)=O(k^{-\infty})\ \ \mbox{on $X\times U$}.
\end{equation}
\end{lem}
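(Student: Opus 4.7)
The strategy is to combine the off-diagonal smallness of $P_{k,\tau^2}$ (Theorem~\ref{t-gue240813yyd}) with a non-stationary phase argument in the group variable, made possible by the hypothesis $p\notin\mu^{-1}(0)$.

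First, since $P_{k,\tau^2}(x,y)=O(k^{-\infty})$ off the diagonal, after shrinking $V$ around $e_0$ and $U$ around $p$ the integrand is $O(k^{-\infty})$ unless $x$ lies in a small neighborhood $\widetilde U$ of $p$; so we may work on $\widetilde U\times U$. There we invoke the oscillatory representation of Theorem~\ref{t-gue240717yyd}, writing
\[
P_{k,\tau^2,s}(x,g\cdot y)=\int_{\mathbb R}e^{ik\varphi(x,g\cdot y,t)}a(x,g\cdot y,t,k)\,\mathrm{d}t+O(k^{-\infty}),
\]
and reducing the claim to estimating an oscillatory integral with phase $\Psi(x,y,g,t):=\varphi(x,g\cdot y,t)$ and amplitude $a(x,g\cdot y,t,k)\chi(g)m(g)\in S^{n+1}_{\mathrm{loc}}(1)$ supported in $t\in I$.

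The key computation is of the critical set in the group variable. Differentiating $\Psi$ along the one-parameter subgroup generated by $\xi\in\mathfrak g$, using \eqref{e-gue150807b} together with the defining formulas for $\gamma$ and $\mu$, one obtains
\[
\partial_{v_\xi}\Psi\big|_{g=e_0,\,x=y}=\big\langle d_y\varphi(y,y,t),\,\xi_X(y)\big\rangle=\tfrac12\bigl\langle\gamma(y)-2t\mu(y),\,\xi\bigr\rangle=\tfrac12\bigl\langle\hat\mu_t(y),\,\xi\bigr\rangle.
\]
By Assumption~\ref{a-gue240720yyd}, $Y=\hat\mu_t^{-1}(0)=\gamma^{-1}(0)\cap\mu^{-1}(0)\subset\mu^{-1}(0)$ (independent of $t\in I$), so $p\notin\mu^{-1}(0)$ forces $p\notin Y$ and hence $\hat\mu_t(p)\neq 0$ for every $t\in I$. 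Thus for each $t_0\in\overline{\mathrm{supp}\,\tau}\subset I$ we may select $\xi_{t_0}\in\mathfrak g$ with $\langle\hat\mu_{t_0}(p),\xi_{t_0}\rangle\neq 0$, and by continuity the inequality persists, with a uniform positive lower bound, on an open neighborhood of $(t_0,p)$ in $I\times X$.

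Covering $\overline{\mathrm{supp}\,\tau}$ by finitely many such neighborhoods and introducing a subordinate partition of unity $\{\chi_j(t)\}$, after further shrinking $U$ and $V$ one obtains on each piece a uniform lower bound $|\partial_{v_{\xi_j}}\Psi|\geq c>0$ throughout the support of the amplitude. Iterated integration by parts in the $v$ variable, using the transpose of the operator $L_j:=(ik\,\partial_{v_{\xi_j}}\Psi)^{-1}\partial_{v_{\xi_j}}$, then yields decay $O(k^{-N})$ for every $N$, uniformly on compact subsets of $\widetilde U\times U$. The main technical point to watch is verifying that the iterated symbol derivatives stay in $S^{n+1}_{\mathrm{loc}}(1)$; this is standard once $\partial_{v_{\xi_j}}\Psi$ is bounded below and smooth, and the patching in $t$ is painless because $\mathrm{supp}_t\,a\subset I$. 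Summing the pieces and combining with the off-diagonal bound completes the proof.
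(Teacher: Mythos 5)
Your proposal is correct and matches the paper's own argument: the paper likewise reduces to a neighborhood of $p$ via the off-diagonal estimate of Theorem~\ref{t-gue240813yyd}, writes the kernel with the phase $\varphi(x,(v+\gamma(y''),y''),t)$ from Theorem~\ref{t-gue240717yyd}, and uses that $d_y\varphi(x,x,t)=2\,{\rm Im\,}\ddbar_b\Phi-t\omega_0$ pairs with $\underline{\mathfrak{g}}$ to give $\tfrac12\langle\hat\mu_t,\xi\rangle\neq0$ near $p$ (so $d_v\varphi\neq0$), concluding by integration by parts in $v$. Your explicit appeal to Assumption~\ref{a-gue240720yyd} to pass from $p\notin\mu^{-1}(0)$ to $\hat\mu_t(p)\neq0$ for all $t\in I$, and the partition of unity in $t$, just make precise what the paper leaves implicit.
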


\begin{proof}
 Take local coordinates $v=(v_1,\ldots,v_d)$ of $G$ defined in a neighborhood $V$ of $e_0$ with $v(e_0)=(0,\ldots,0)$, local coordinates $x=(x_1,\ldots,x_{2n+1})$ of $X$ defined in a neighborhood $U=U_1\times U_2$ of $p$ with $0\leftrightarrow p$, where $U_1\subset\Real^d$ is an open set of $0\in\mathbb R^d$,  $U_2\subset\mathbb R^{2n+1-d}$ is an open set of $0\in\mathbb R^{2n+1-d}$, such that  
\[
\renewcommand{\arraystretch}{1.2}
\begin{array}{ll}
&(v_1,\ldots,v_d)\circ (\gamma(x_{d+1},\ldots,x_{2n}),x_{d+1},\ldots,x_{2n+1})\\
&=(v_1+\gamma_1(x_{d+1},\ldots,x_{2n}),\ldots,v_d+\gamma_d(x_{d+1},\ldots,x_{2n}),x_{d+1},\ldots,x_{2n+1}),\\
&\forall (v_1,\ldots,v_d)\in V,\ \ \forall (x_{d+1},\ldots,x_{2n+1})\in U_2,
\end{array}
\]
and
\[
\underline{\mathfrak{g}}={\rm span\,}\set{\frac{\pr}{\pr x_1},\ldots,\frac{\pr}{\pr x_d}},
\]
where  $\gamma=(\gamma_1,\ldots,\gamma_d)\in \mathcal{C}^\infty(U_2,U_1)$ with $\gamma(0)=0\in\mathbb R^d$. 

Let $s$ be a local CR rigid trivializing section of $L$ defined on an open set $U$ of $p$. We first assume that $x, y\in U$. 
From Theorem~\ref{t-gue240717yyd}, we have 
\[
\int_GP_{k,\tau^2,s}(x,g\cdot y)\chi(g)d\mu(g)\equiv\int e^{i(\varphi(x,(v+\gamma(y''),y''),t)k}a(x,(v+\gamma(y''),y''),t,k)\chi(v)m(v)dvdt,
\]
where $y''=(y_{d+1},\ldots,y_{2n+1})$, $m(v)dv=d\mu|_V$. Since $p\notin\mu^{-1}(0)$ and notice that $d_y\varphi(x,x,t)=2{\rm Im\,}\ddbar_b\Phi-t\omega_0(x)$, we deduce that if $V$ and $U$ are small then
$d_v(\varphi(x,(v+\gamma(y''),y''),t))\neq0$, for every $v\in V$, $(x,y)\in U\times U$. Hence, by using integration by parts with respect to $v$, we get
\begin{equation}\label{e-gue170110r}
 \int_GP_{k,\tau^2,s}(x,g\cdot y)\chi(g)d\mu(g)=O(k^{-\infty})\ \ \mbox{on $U$}.
 \end{equation}
From  \eqref{e-gue170110r}, we get 
\begin{equation}\label{e-gue170110z}
\int_GP_{k,\tau^2}(x,g\cdot y)\chi(g)d\mu(g)=O(k^{-\infty})\ \ \mbox{on $U\times U$}.
\end{equation}

Now fix $x_0\in X$, $x_0\notin U$. From Theorem~\ref{t-gue240813yyd}, we can check that 
\begin{equation}\label{e-gue170110zz}
\int_GP_{k,\tau^2}(x,g\cdot y)\chi(g)d\mu(g)=O(k^{-\infty})\ \ \mbox{on $W\times U$},
\end{equation}
where $W$ is a small open neighborhood of $x_0$. From \eqref{e-gue170110z} 
and \eqref{e-gue170110zz}, the lemma follows. 
\end{proof}

\begin{lem}\label{l-gue170111}
Let $p\notin\mu^{-1}(0)$ and let $h\in G$. We can find open sets $U$ of $p$ and $V$ of $h$ such that for every $\chi\in\mathcal{C}^\infty_c(V)$, we have 
$\int_GP_{k,\tau^2}(x,g\cdot y)\chi(g)d\mu(g)=O(k^{-\infty})\ \ \mbox{on $X\times U$}.$
\end{lem}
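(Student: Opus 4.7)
The plan is to reduce the statement for an arbitrary $h\in G$ to the case $h=e_0$ already established in Lemma~\ref{l-gue170110}. The reduction rests on two observations: the Haar measure $d\mu$ is bi-invariant because $G$ is compact, and the operator $P_{k,\tau^2}$ is $G$-equivariant. The equivariance comes from its very definition $P_{k,\tau^2}=F_{k,\tau}\circ\Pi_{k,I}\circ F_{k,\tau}$: since $G$ commutes with the $\mathbb R$-action, acts by CR automorphisms and $L$ is $G$-equivariant, both the functional calculus $F_{k,\tau}$ of $-iT$ and the orthogonal projection $\Pi_{k,I}$ onto the $G$-invariant closed subspace $\mathcal{H}^0_{b,I}(X,L^k)$ commute with the $G$-action on $L^2(X,L^k)$. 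Translating this to distribution kernels yields
\[
P_{k,\tau^2}(x,\,g\cdot y)=g\cdot P_{k,\tau^2}(g^{-1}\cdot x,\,y),\qquad g\in G,
\]
where the $g\cdot$ on the right is the bundle isomorphism $L^k_y\to L^k_{g\cdot y}$, an action of bounded operator norm uniformly in $g$.

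The next step is the change of variable $g=hg'$ in the integral. Using left-invariance of $d\mu$ and the equivariance above, we get
\[
\int_G P_{k,\tau^2}(x,\,g\cdot y)\chi(g)\,d\mu(g)=h\cdot \int_G P_{k,\tau^2}(h^{-1}\cdot x,\,g'\cdot y)\,\chi(hg')\,d\mu(g').
\]
Setting $\chi'(g'):=\chi(hg')$, the cut-off $\chi'$ is supported in $h^{-1}V$, which is a small open neighborhood of $e_0\in G$ as soon as $V$ is a small enough open neighborhood of $h$.

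Since $p\notin\mu^{-1}(0)$, Lemma~\ref{l-gue170110} applied at the point $p$ produces an open neighborhood $U$ of $p$ and an open neighborhood $V''$ of $e_0$ such that for any $\psi\in\mathcal{C}^\infty_c(V'')$,
\[
\int_G P_{k,\tau^2}(\tilde x,\,g'\cdot y)\psi(g')\,d\mu(g')=O(k^{-\infty})\quad\text{on }X\times U.
\]
Choose now $V$ to be a neighborhood of $h$ small enough that $h^{-1}V\subset V''$, so that $\chi'\in\mathcal{C}^\infty_c(V'')$. Applying the estimate above with $\tilde x=h^{-1}\cdot x$, which ranges over $X$ as $x$ does, and using that the bundle action of $h$ is uniformly bounded in all derivatives, we obtain
\[
\int_G P_{k,\tau^2}(x,\,g\cdot y)\chi(g)\,d\mu(g)=O(k^{-\infty})\quad\text{on }X\times U,
\]
which is the claim. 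There is no essential obstacle here: the only conceptual point requiring care is the verification of the $G$-equivariance of $P_{k,\tau^2}$, and once that is in hand, the bi-invariance of $d\mu$ makes the reduction to the already-proved case $h=e_0$ entirely formal.
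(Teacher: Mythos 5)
Your proposal is correct, and it reaches the same destination as the paper (reduction to Lemma~\ref{l-gue170110}) by a slightly different mechanism. The paper simply substitutes $g\mapsto g\cdot h$ (right translation, using bi-invariance of $d\mu$), which turns the cut-off $\hat\chi$ near $h$ into $\chi(g)=\hat\chi(g\cdot h)$ near $e_0$ but at the price of replacing the base point $y$ by $h\cdot y$; one then invokes Lemma~\ref{l-gue170110} with the second variable near $h\cdot p$ (note $h\cdot p\notin\mu^{-1}(0)$ by $G$-invariance of $\mu^{-1}(0)$), and the neighborhood $U$ of $p$ is the $h^{-1}$-translate of the one so obtained — a point the paper leaves implicit. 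You instead translate on the left, $g=hg'$, and use the $G$-equivariance of $P_{k,\tau^2}$ to move $h$ onto the $x$-slot, where the estimate of Lemma~\ref{l-gue170110} is uniform over all of $X$; this lets you apply that lemma directly at $p$ with the same $U$, so your argument is in this respect a bit cleaner, but it needs the extra input that $F_{k,\tau}$ and $\Pi_{k,I}$ commute with the $G$-action — true, and consistent with what the paper uses elsewhere, while the paper's own proof avoids equivariance altogether. One small imprecision: the kernel identity should read $P_{k,\tau^2}(x,g\cdot y)=g\circ P_{k,\tau^2}(g^{-1}\cdot x,y)\circ g^{-1}$, i.e.\ the ($k$-independent, unitary on fibers since $h^L$ is $G$-invariant) bundle action enters on \emph{both} slots of $L^k_x\otimes(L^k_y)^*$, not only as a map $L^k_y\to L^k_{g\cdot y}$; since these are fixed smooth bundle isomorphisms, they preserve the $O(k^{-\infty})$ property and your conclusion stands.
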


\begin{proof}
Let $U$ and $V$ be open sets as in Lemma~\ref{l-gue170110}. Let $\hat V=Vh$. Then, $\hat V$ is an open set of $h$. Let $\hat\chi\in\mathcal{C}^\infty_c(\hat V)$. We have 
\begin{equation}\label{e-gue170111}
\int_GP_{k,\tau^2}(x,g\cdot y)\hat\chi(g)d\mu(g)=\int_GP_{k,\tau^2}(x,g\cdot h\cdot y)\hat\chi(g\cdot h)d\mu(g)=\int_GP_{k,\tau^2}(x,g\cdot h\cdot y)\chi(g)d\mu(g),
\end{equation}
where $\chi(g):=\hat\chi(g\cdot h)\in\mathcal{C}^\infty_c(V)$. From \eqref{e-gue170111} and Lemma~\ref{l-gue170110}, we deduce that 
\[\int_GP_{k,\tau^2}(x,g\cdot y)\hat\chi(g)d\mu(g)=O(k^{-\infty})\ \ \mbox{on $X\times U$}.\]
The lemma follows. 
\end{proof} 

\begin{proof}[Proof of \eqref{e-gue240903yydII}]
Fix $p\notin\mu^{-1}(0)$. 
Let $h\in G$. By Lemma~\ref{l-gue170111}, we can find open sets $U_h$ of $p$ and $V_h$ of $h$ such that for every $\chi\in\mathcal{C}^\infty_c(V_h)$, we have 
\begin{equation}\label{e-gue170111c}
\int_GP_{k,\tau^2}(x,g\cdot y)\chi(g)d\mu(g)=O(k^{-\infty})\ \ \mbox{on $X\times U_h$}.
\end{equation}
Since $G$ is compact, we can find open sets $U_{h_j}$ and $V_{h_j}$, $j=1,\ldots,N$, such that $G=\bigcup^N_{j=1}V_{h_j}$. Let $U=\bigcap^N_{j=1}U_{h_j}$ and let $\chi_j\in\mathcal{C}^\infty_c(V_{h_j})$, $j=1,\ldots,N$, with $\sum^N_{j=1}\chi_j=1$ on $G$. From \eqref{e-gue170111c}, we have 
\[
P^G_{k,\tau^2}(x,y)=\int_GP_{k,\tau^2}(x,g\cdot y)d\mu(g)=\sum^N_{j=1}\int_GP_{k,\tau^2}(x,g\cdot y)\chi_j(g)d\mu(g)=O(k^{-\infty})\ \ \mbox{on $X\times U$}.
\]
\end{proof}

\begin{proof}[Proof of Theorem~\ref{thm:2}]
We now determine the leading term $b_0(p,p,t_0)$. In view of \eqref{e-gue240814yyd}, we only need to calculate $m(0)$.
Put $Y_p=\set{g\cdot p;\, g\in G}$. $Y_p$ is a $d$-dimensional submanifold of $X$. The $G$-invariant Hermitian metric $\langle\,\cdot\,|\,\cdot\,\rangle$ induces a volume form $dV_{Y_p}$ on $Y_p$. Put 
\[
V_{{\rm eff\,}}(p):=\int_{Y_p}dV_{Y_p}.
\]
For $f(g)\in\mathcal{C}^\infty(G)$, let $\hat f(g\cdot p):=f(g)$, $\forall g\in G$. Then, $\hat f\in \mathcal{C}^\infty(Y_p)$. Let $d\hat\mu$ be the measure on $G$ given by $\int_Gfd\hat\mu:=\int_{Y_p}\hat fdv_{Y_p}$, for all $f\in\mathcal{C}^\infty(G)$. It is not difficult to see that $d\hat\mu$ is a Haar measure and 
\begin{equation}\label{e-gue170108}
\int_Gd\hat\mu=V_{{\rm eff\,}}(p). 
\end{equation} 
In view of \eqref{e-gue161202I}, we see that
$\set{\frac{1}{\sqrt{2}}\frac{\pr}{\pr x_1},\ldots,\frac{1}{\sqrt{2}}\frac{\pr}{\pr x_d}}$ is an orthonormal basis for $\underline{\mathfrak{g}}_p$. From this observation and \eqref{e-gue170108}, we deduce that 
\begin{equation}\label{e-gue241025yyd}
m(0)=2^{\frac{d}{2}}\frac{1}{V_{{\rm eff\,}}(p)}.
\end{equation} 
From \eqref{e-gue240814yyd} and \eqref{e-gue241025yyd}, we get 
Theorem \ref{thm:2}. 
\end{proof}

\section{Proof of Theorem~\ref{thm:quantandred}}

\subsection{Preparation}\label{s-gue170226}

Fix $p\in Y$ and let $x=(x_1,\ldots,x_{2n+1})$ be the local coordinates as in Proposition~\ref{prop:coordinates} defined in an open set $U$ of $p$. We may assume that $U=\Omega_1\times\Omega_2\times\Omega_3\times\Omega_4$, where $\Omega_1\subset\mathbb R^d$, $\Omega_2\subset\mathbb R^d$ are open sets of $0\in\mathbb R^d$, $\Omega_3\subset\mathbb R^{2n-2d}$ is an open set of $0\in\mathbb R^{2n-2d}$ and $\Omega_4$ is an open set of $0\in\mathbb R$. From now on, we identify $\Omega_2$ with 
\[\set{(0,\ldots,0,x_{d+1},\ldots,x_{2d},0,\ldots,0)\in U;\, (x_{d+1},\ldots,x_{2d})\in\Omega_2},\] 
$\Omega_3$ with $\set{(0,\ldots,0,x_{2d+1},\ldots,x_{2n},0)\in U;\, (x_{d+1},\ldots,x_{2n})\in\Omega_3}$,  $\Omega_2\times\Omega_3$ with 
\[\set{(0,\ldots,0,x_{d+1},\ldots,x_{2n},0)\in U;\, (x_{d+1},\ldots,x_{2n})\in\Omega_2\times\Omega_3}.\] 
For $x=(x_1,\ldots,x_{2n+1})$, we write $x''=(x_{d+1},\ldots,x_{2n+1})$, $\mathring{x}''=(x_{d+1},\ldots,x_{2n})$,
$\hat x''=(x_{d+1},\ldots,x_{2d})$, 
\[\Td x''=(x_{2d+1},\ldots,x_{2n+1}),\ \ \Td{\mathring{x}}''=(x_{2d+1},\ldots,x_{2n}).\] 
From now on, we identify $x''$ with $(0,\ldots,0,x_{d+1},\ldots,x_{2n+1})\in U$, $\mathring{x}''=(x_{d+1},\ldots,x_{2n})$ with 
$(0,\ldots,0,x_{d+1},\ldots,x_{2n},0)\in U$, $\hat x''$ with 
$(0,\ldots,0,x_{d+1},\ldots,x_{2d},0,\ldots,0)\in U,$ \\
$\Td x''$ with $(0,\ldots,0,x_{2d+1},\ldots,x_{2n+1})\in U$, $\Td{\mathring{x}}''$ with $(0,\ldots,0,x_{2d+1},\ldots,x_{2n},0)$. Since $G$ acts freely on $Y$, we take $\Omega_2$ and $\Omega_3$ small enough so that if $x, x_1\in\Omega_2\times\Omega_3$ and $x\neq x_1$, then 
\begin{equation}\label{e-gue170227c}
g\cdot x\neq g_1\cdot x_1,\ \ \forall g, g_1\in G.
\end{equation} 

Let $A(x,y,t)\in\mathcal{C}^\infty(U\times U\times I)$ be as in Theorem~\ref{t-gue240903yyd}.
From $\ddbar_bP^G_{k,\tau^2}=0$, we can check that 
\begin{equation}\label{e-gue170225II}
\mbox{$\ddbar_bA(x,y,t)$ vanishes to infinite order at ${\rm diag\,}\Bigr((Y\bigcap U)\times(Y\bigcap U)\Bigr)$}.
\end{equation}
From \eqref{e-gue170225II} and notice that $\frac{\pr}{\pr x_j}+i\frac{\pr}{\pr x_{d+j}}\in T^{0,1}_xX$, $j=1,\ldots,d$, where $x\in Y$  and 
$\frac{\pr}{\pr x_j}A(x,y,t)=\frac{\pr}{\pr y_j}A(x,y,t)=0$, $j=1,\ldots,d$, we conclude that 
\[
\renewcommand{\arraystretch}{1.2}
\begin{array}{ll}
&\mbox{$\frac{\pr}{\pr x_{d+j}}A(x,y,t)|_{x_{d+1}=\cdots=x_{2d}=0}$ and $\frac{\pr}{\pr y_{d+j}}A(x,y,t)|_{y_{d+1}=\cdots=y_{2d}=0}$ vanish to infinite order at}\\
&{\rm diag\,}\Bigr(Y\bigcap U)\times Y\bigcap U)\Bigr).
\end{array}
\]
Let $G_j(x,y,t):=\frac{\pr}{\pr y_{d+j}}A(x,y,t)|_{y_{d+1}=\cdots=y_{2d}=0}$, $H_j(x,y,t):=\frac{\pr}{\pr x_{d+j}}A(x,y,t)|_{x_{d+1}=\cdots=x_{2d}=0}$, $j=1,\ldots,d$. Put
\[
A_1(x,y,t):=A(x,y,t)-\sum^d_{j=1}y_{d+j}G_j(x,y,t),\quad A_2(x,y,t):=A(x,y,t)-\sum^d_{j=1}x_{d+j}H_j(x,y,t).
\]
Then, for $j=1, 2, \ldots, d$,
\begin{equation}\label{e-gue170226Ip}
\frac{\pr}{\pr y_{d+j}}A_1(x,y,t)|_{y_{d+1}=\cdots=y_{2d}=0}=0 \quad \text{and} \quad \frac{\pr}{\pr x_{d+j}}A_2(x,y,t)|_{x_{d+1}=\cdots=x_{2d}=0}=0,
\end{equation}
and, for $j=1,2$,
\begin{equation}\label{e-gue170226II}
\mbox{$A(x,y,t)-A_j(x,y,t)$ vanishes to infinite order at ${\rm diag\,}\Bigr((Y\bigcap U)\times(Y\bigcap U)\Bigr)$}. 
\end{equation} 

We also write $u=(u_1,\ldots,u_{2n+1})$ to denote the local coordinates of $U$. For any smooth function $f\in\mathcal{C}^\infty(U)$, we write $\Td f\in\mathcal{C}^\infty(U^{\mathbb C})$ to denote an almost analytic extension of $f$, where $U^\mathbb C$ is an open set in $\mathbb C^{2n+1}$ with $U^\mathbb C\cap\mathbb R^{2n+1}=U$. We consider the following two systems 
\begin{equation}\label{e-gue170226III}
\begin{split}
&\frac{\pr\Td A_1}{\pr\Td u_{2d+j}}(\Td x,\Td{\Td u''},\Td t)+\frac{\pr\Td A_2}{\pr\Td x_{2d+j}}(\Td{\Td u''},\Td y,\Td s)=0,\ \ j=1,2,\ldots,2n-2d,\\
&\frac{\pr\Td A_2}{\pr\Td s}(\Td{\Td u''},\Td y,\Td s)=0,
\end{split}
\end{equation}
and 
\begin{equation}\label{e-gue170226IIIa}
\begin{split}
&\frac{\pr\Td A_1}{\pr\Td u_{d+j}}(\Td x,\Td{u''},\Td t)+\frac{\pr\Td A_2}{\pr\Td x_{d+j}}(\Td{u''},\Td y,\Td s)=0,\ \ j=1,2,\ldots,2n-d,\\
&\frac{\pr\Td A_2}{\pr\Td s}(\Td{u''},\Td y,\Td s)=0.
\end{split}
\end{equation}
where $\Td{\Td u''}=(0,\ldots,0,\Td u_{2d+1},\ldots,\Td u_{2n+1})$, $\Td{u''}=(0,\ldots,0,\Td u_{d+1},\ldots,\Td u_{2n+1})$. From \eqref{e-gue170226Ip}, we can take $\Td A_1$ and $\Td A_2$ so that for every $j=1,2,\ldots,d$, 
\begin{equation}\label{e-gue170226I}
\frac{\pr\Td A_1}{\pr\Td u_{d+j}}(\Td x,\Td{u''},t)=0 \quad \text{and} \quad
\frac{\pr\Td A_2}{\pr\Td x_{d+j}}(\Td{u''},\Td y,t)=0,\ \ \mbox{if $\Td u_{d+1}=\cdots=\Td u_{2d}=0$},
\end{equation}
and, for $j=1, 2$,
\begin{equation}\label{e-gue170227}
\Td A_j(\Td x, \Td y,t)=\Td x_{2n+1}-\Td y_{2n+1}+\Td{\hat A_j}(\Td{\mathring{x}''},\Td{\mathring{y}''},t),\ \ \Td{\hat A_j}\in\mathcal{C}^\infty(U^\mathbb C\times U^\mathbb C),
\end{equation}
where $\Td{\mathring{x}''}=(0,\ldots,0,\Td x_{d+1},\ldots,\Td x_{2n},0)$, $\Td{\mathring{y}''}=(0,\ldots,0,\Td y_{d+1},\ldots,\Td y_{2n},0)$. 

From \eqref{e-gue240909ycd}, 
it is not difficult to see that 
\[\begin{split}
&\frac{\pr\Td A_1}{\pr\Td u_{d+j}}(\Td x'',\Td x'',t)+\frac{\pr\Td A_2}{\pr\Td x_{d+j}}(\Td x'',\Td x'',t)=0,\ \ j=1,2,\ldots,2n-d,\\
&\frac{\pr\Td A_2}{\pr\Td s}(\Td x'',\Td x'',t)=0.
\end{split}\]
Hence, at $x'=0, \hat x''=0$, $(\Td{\Td{u}''},\Td s)=(\Td x'',t)$ and $(\Td{u''},\Td s)=(\Td x'',t)$
are real critical points of \eqref{e-gue170226III} and \eqref{e-gue170226IIIa} respectively. 
Let 
\[\begin{split}
&F(\Td x,\Td y,\Td{\Td {u}''},\Td s,\Td t):=
(\Td A_1(\Td x,\Td{\Td{u}''},\Td t)+\Td A_2(\Td{\Td{u}''},\Td y,\Td s), \frac{\pr\Td A_2}{\pr\Td s}(\Td{\Td{u}''},\Td y,\Td s)),\\
&\hat F(\Td x,\Td y,\Td{u}'',\Td s,\Td t):=
(\Td A_1(\Td x,\Td{u''},\Td t)+\Td A_2(\Td{u''},\Td y,\Td s), \frac{\pr\Td A_2}{\pr\Td s}(\Td{u''},\Td y,\Td s)).
\end{split}\]
Let ${\rm Hess\,}_{(\Td s,\Td{\Td{u}''})}F(\Td x,\Td y,\Td{\Td{u}''},\Td s,\Td t)$ denote the complex Hessian of $F$ with respect to $(\Td s,\Td{\Td{u}''})$ at $(\Td x,\Td y,\Td{\Td{u}''},\Td s,\Td t)$ and let ${\rm Hess\,}_{(\Td s,\Td{u}'')}\hat F(\Td x,\Td y,\Td{u}'',\Td s,\Td t)$ denote the complex Hessian of $\hat F$ with respect to $(\Td s,\Td{u}'')$ at $(\Td x,\Td y,\Td{u}'',\Td s,\Td t)$. 
We can check that  the matrices 
\[{\rm Hess\,}_{(\Td s,\Td{\Td{u}''})}F(\Td x,\Td y,\Td{\Td{u}''},\Td s,\Td t)|_{\Td x=\Td y=\Td x'', \Td{\Td{u}''}=\Td x'', \Td s=t},\ \ {\rm Hess\,}_{(\Td s,\Td{u}'')}\hat F(\Td x,\Td y,\Td{u}'',\Td s,\Td t)|_{\Td x=\Td y=\Td x'', \Td{u}''=\Td x'', \Td s=t}\]
are non-singular, for every $t\in I$. Moreover, fix $t_0\in I$. From Theorem~\ref{t-gue240903yyd}, 
it is straightforward  to see that 
\begin{equation}\label{e-gue240912yyd}
\begin{split}
&\det{\rm Hess\,}_{(\Td s,\Td{\Td{u}''})}F(\Td x,\Td y,\Td{\Td{u}''},\Td s,\Td t)|_{\Td x=\Td y=p, \Td{\Td{u}''}=p, \Td s=t_0}=(-1)(2i\abs{\lambda_{d+1}(t_0)}\cdots 2i\abs{\lambda_n(t_0)})^2,\\
&\det{\rm Hess\,}_{(\Td s,\Td{u}'')}\hat F(\Td x,\Td y,\Td{u}'',\Td s,\Td t)|_{\Td x=\Td y=p, \Td{u}''=p, \Td s=t_0}\\
&=(-1)(2i\abs{\lambda_1(t_0)}\cdots 2i\abs{\lambda_d(t_0)})(2i\abs{\lambda_{d+1}(t_0)}\cdots 2i\abs{\lambda_n(t_0)})^2.
\end{split}
\end{equation}
Hence, near $(p,p)$, we can solve \eqref{e-gue170226III} and \eqref{e-gue170226IIIa} and the solutions are unique. Let 
\[\begin{split}
&(\Td{\Td u''},\Td s)=(\alpha(\Td x,\Td y,\Td t),\gamma(\Td x,\Td y,\Td t)),\\
&\alpha(\Td x,\Td y,\Td t)=(\alpha_{2d+1}(\Td x,\Td y,\Td s),\ldots,\alpha_{2n}(\Td x,\Td y,\Td t)\in\mathcal{C}^\infty(U^\mathbb C\times U^\mathbb C\times I^\mathbb C,\mathbb C^{2n-2d}),\\ 
&\gamma(\Td x,\Td y,\Td t)\in\mathcal{C}^\infty(U^\mathbb C\times U^\mathbb C\times I^\mathbb C,\mathbb C),\end{split}\]
and 
\[\begin{split}
    &(\Td{u''},\Td s)=(\beta(\Td x,\Td y,\Td t),\delta(\Td x,\Td y,\Td t)),\\
    &\beta(\Td x,\Td y,\Td t)=(\beta_{d+1}(\Td x,\Td y,\Td s),\ldots,\beta_{2n}(\Td x,\Td y,\Td t)\in\mathcal{C}^\infty(U^\mathbb C\times U^\mathbb C\times I^\mathbb C,\mathbb C^{2n-d}),\\
    &\delta(\Td x,\Td y,\Td t)\in\mathcal{C}^\infty(U^\mathbb C\times U^\mathbb C\times I^\mathbb C,\mathbb C)\end{split}\]
be the solutions of \eqref{e-gue170226III} and \eqref{e-gue170226IIIa}, respectively. From \eqref{e-gue170226I}, it is easy to see that 
\begin{equation}\label{e-gue170226b}
\begin{split}
&\beta(x,y,t)=(\beta_{d+1}(x,y,t),\ldots,\beta_{2n}(x,y,t))=(0,\ldots,0,\alpha_{2d+1}(x,y,t),\ldots,\alpha_{2n}(x,y,t)),\\
&\delta(x,y,t)=\gamma(x,y,t).
\end{split}
\end{equation}
From \eqref{e-gue170226b}, we see that the value of $\Td A_1(x,\Td{\Td u''},\Td t)+\Td A_2(\Td{\Td u''},y,\Td s)$ at critical points $\Td{\Td u''}=\alpha(x,y,t)$, $\Td s=\gamma(x,y,t)$ is equal to the value of 
$\Td A_1(x,\Td{u''},t)+\Td A_2(\Td{u''},y,t)$ at critical points $\Td{u''}=\beta(x,y,t)$, $\Td s=\delta(x,y,t)$. Put 
\begin{equation}\label{e-gue170226bI}
\begin{split}
A_3(x,y,t):&=\Td A_1(x,\alpha(x,y,t),t)+\Td A_2(\alpha(x,y,t),y,\gamma(x,y,t))\\
&=\Td A_1(x,\beta(x,y,t),t)+\Td A_2(\beta(x,y,t),y,\delta(x,y,t)).
\end{split}
\end{equation}
$A_3(x,y,t)$ is a complex phase function. From \eqref{e-gue170227}, we have
\[
A_3(x,y,t)=x_{2n+1}-y_{2n+1}+\hat A_3(\mathring{x}'',\mathring{y}''),\ \ \hat A_3(\mathring{x}'',\mathring{y}'')\in\mathcal{C}^\infty(U\times U).
\]

\begin{defn}\label{d-gue240909yyd}
Let $\Phi_1, \Phi_2\in\mathcal{C}^\infty(U\times U\times I)$. 
Assume that $\Phi_1$ and $\Phi_2$ satisfy \eqref{e-gue240909ycd} and \eqref{e-gue170106m}. We say that $\Phi_1$ and $\Phi_2$ are equivalent on $U$ if for any $b_1(x,y,t,k)\in S^{n-\frac{d}{2}}_{{\rm loc\,},{\rm cl\,}}(U\times U\times I)$, ${\rm supp\,}_tb_1(x,y,t,k)\subset I$, we can find 
$b_2(x,y,t,k)\in S^{n-\frac{d}{2}}_{{\rm loc\,},{\rm cl\,}}(U\times U\times I)$, ${\rm supp\,}_tb_2(x,y,t,k)\subset I$, such that 
\[\int e^{ik\Phi_1(x,y,t}b_1(x,y,t,k)dt=\int e^{ik\Phi_2(x,y,t)}b_2(x,y,t,k)dt+O(k^{-\infty})\ \ \mbox{on $U\times U$}\]
and vise versa.
\end{defn} 

\begin{thm}\label{t-gue170226cw}
$A_1$ and $A_3$ are equivalent on $U$ in the sense of Definition~\ref{d-gue240909yyd}. 
\end{thm}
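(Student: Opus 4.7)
The strategy is to reduce the claim to showing that
\[
R(x,y,t):=A_1(x,y,t)-A_3(x,y,t)
\]
vanishes to infinite order on the real diagonal $\Sigma:=\{(x,x,t):x\in Y\cap U,\; t\in I\}$, and then to build the symbol $b_2$ from $b_1$ (and conversely) by a standard Borel summation argument that relies on the positivity estimate (\ref{e-gue170106m}).

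The key point for the first part is that $(\tilde{\tilde u''},\tilde s)=(\tilde x'',t)$ is the real critical point of the system (\ref{e-gue170226III}) over $\Sigma$. Indeed, by (\ref{e-gue240909ycd}) together with (\ref{e-gue170226II}) and (\ref{e-gue170227}), the identities $d_yA_1+d_xA_2=0$ and $\partial_s A_2=0$ hold on $\Sigma$ (since the modifications from $A$ to $A_j$ vanish to infinite order there). Because $A_1(x,x,t)=A_2(x,x,t)=0$ on $\Sigma$, the critical value gives $A_3(x,x,t)=0=A_1(x,x,t)$. To propagate this equality to infinite order, I would differentiate both the defining identity $A_3=\tilde A_1+\tilde A_2\big|_{\mathrm{crit.\,pt.}}$ and the critical-point equations iteratively in directions transverse to $\Sigma$, and show by induction that each mixed partial of $A_3$ at a point of $\Sigma$ coincides with the corresponding partial of $A_1$, using (\ref{e-gue170226II}) to absorb all terms involving $A_2-A$ and $A_1-A$, and the non-degeneracy (\ref{e-gue240912yyd}) at each step of the implicit function theorem.

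Once $R$ is shown to vanish to infinite order along $\Sigma$, the construction of $b_2$ is routine. By the lower bound ${\rm Im\,}A_3(x,y,t)\geq c(|\hat x''|^2+|\hat y''|^2+|\mathring{x}''-\mathring{y}''|^2)$ of Theorem~\ref{t-gue170105I}, the oscillatory integral is microlocally concentrated on $\Sigma$; on that effective support the powers $R^j$ provide decay which turns the formal expansion
\[
e^{ikR(x,y,t)}b_1(x,y,t,k)\sim \sum_{j\geq 0}\frac{(ikR(x,y,t))^j}{j!}b_1(x,y,t,k)
\]
into a well-defined asymptotic sum in $S^{n-d/2}_{\mathrm{loc},\mathrm{cl}}(U\times U\times I)$. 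Borel summation then produces $b_2\in S^{n-d/2}_{\mathrm{loc},\mathrm{cl}}(U\times U\times I)$ with ${\rm supp\,}_tb_2\subset I$ realizing this expansion, so that
\[
\int e^{ikA_1(x,y,t)}b_1(x,y,t,k)\,dt=\int e^{ikA_3(x,y,t)}b_2(x,y,t,k)\,dt+O(k^{-\infty})
\]
on $U\times U$. The converse direction follows by symmetry, after noting that $-R$ also vanishes to infinite order on $\Sigma$.

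The main obstacle is the infinite-order agreement of $A_1$ and $A_3$ on $\Sigma$. Conceptually it is forced by the relations (\ref{e-gue170226II}), (\ref{e-gue240909ycd}) and the stationary-value construction, but the concrete verification demands careful bookkeeping of derivatives through the implicit function theorem applied to (\ref{e-gue170226III}); this is the only delicate step in the proof.
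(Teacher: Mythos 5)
There is a genuine gap, and it sits exactly where you locate the ``only delicate step'': the claim that $R=A_1-A_3$ vanishes to infinite order on $\Sigma=\{(x,x,t):x\in Y\cap U\}$. Your proposed induction uses only the first-order data \eqref{e-gue240909ycd}, the infinite-order relations \eqref{e-gue170226II}, \eqref{e-gue170227} between $A$, $A_1$, $A_2$, and the non-degeneracy \eqref{e-gue240912yyd}. These inputs do determine the value and first derivatives of $A_3$ on $\Sigma$ (your first paragraph is fine up to that point), but they cannot control the second and higher transversal Taylor coefficients of the critical value: those are built out of the full Hessian and higher jets of $A$ off the diagonal, about which \eqref{e-gue240909ycd} and \eqref{e-gue170226II} say nothing. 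A toy computation shows the hypotheses you invoke are genuinely insufficient: for $\Phi(x,y)=\tfrac{i\lambda}{2}(x-y)^2$, which satisfies the analogues of \eqref{e-gue240909ycd} and the quadratic lower bound on ${\rm Im}\,\Phi$, the critical value of $\Phi(x,u)+\Phi(u,y)$ in $u$ is $\tfrac{i\lambda}{4}(x-y)^2$, which already disagrees at second order. For the actual phase $A$ the agreement (to the extent it holds) is a consequence of $A$ being the phase of an \emph{idempotent} operator, i.e.\ of $P^G_{k,\tau^2}\circ P^G_{k,\tau^2}=P^G_{k,\tau^2}$ together with the non-vanishing of the leading amplitude on $Y$ — an input your argument never uses. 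Moreover, even granting idempotency, the Melin--Sj\"ostrand theory only yields that $A_3$ and $A_1$ generate the same positive ideal, i.e.\ agree to infinite order on the critical set \emph{after} a change of the parameter $t$ (and up to an elliptic factor), not plain infinite-order agreement of the given representatives; so the statement you reduce to is both unproved and possibly stronger than what is true.

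This is also where your route diverges from the paper, which does not compare the phases pointwise at all. The paper writes the same kernel $\chi_1P^G_{k,\tau^2}=\chi_1P^G_{k,\tau^2}\circ P^G_{k,\tau^2}+O(k^{-\infty})$ in two ways: with phase $A$ (Theorem~\ref{thm:Gszego}) and, after Melin--Sj\"ostrand stationary phase in $(u'',s)$, with the critical-value phase $A_3$, obtaining \eqref{e-gue170227yII} with a leading symbol $\hat b_0$ that is non-zero on $Y\cap U$ for $t$ with $\tau(t)\neq 0$. Given an arbitrary classical symbol $\alpha$, it then composes with a zeroth-order elliptic pseudodifferential operator $E$ chosen so that $E$ applied to the $A$-representation reproduces $\int e^{ikA}\alpha\,dt$ (equation \eqref{e-gue240913yyd}), and applies the same $E$ to the $A_3$-representation to produce the transferred amplitude, using stationary phase once more. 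If you want to keep a ``local'' proof in the spirit of your plan, you would have to first establish that $A_1$ and $A_3$ define the same positive Lagrangian ideal (which again requires the projector identity or the eikonal-type structure of $\varphi$), and then invoke the equivalence-of-phase-functions machinery including the parameter change in $t$; your second step (Borel summation of $e^{ikR}b_1$ using \eqref{e-gue170106m}) is standard and would be correct once such an infinite-order statement, in the properly normalized variables, were available.
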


\begin{proof}
Let $s$ be a local rigid CR trivializing section of $L$ defiend on $U$. 
We consider the localized kernel of $P_{k,\tau^2}\circ P_{k,\tau^2}$ on $U$. Let $V\Subset U$ be an open set of $p$. Let $\chi(x'')\in\mathcal{C}^\infty_c(\Omega_2\times\Omega_3\times\Omega_4)$. From \eqref{e-gue170227c}, we can extend $\chi(x'')$ to 
$W:=\set{g\cdot x;\, g\in G, x\in\Omega_2\times\Omega_3\times\Omega_4}$
by $\chi(g\cdot x''):=\chi(x'')$, for every $g\in G$. Assume that  $\chi=1$ on some neighborhood of $V$. Let $\chi_1\in\mathcal{C}^\infty_c(U)$ with $\chi_1=1$ on some neighborhood 
of $V$ and ${\rm Supp\,}\chi_1\subset\set{x\in X;\, \chi(x)=1}$. We have 
\begin{equation}\label{e-gue170227b}
\chi_1P^G_{k,\tau^2}\circ P^G_{k,\tau^2}=\chi_1P^G_{k,\tau^2}\chi\circ P^G_{k,\tau^2}+\chi_1P^G_{k,\tau^2}(1-\chi)\circ P^G_{k,\tau^2}.
\end{equation}
Let's first consider $\chi_1P^G_{k,\tau^2}(1-\chi)\circ P^G_{k,\tau^2}$. We have 
\begin{equation}\label{e-gue170227bI}
(\chi_1P^G_{k,\tau^2}(1-\chi))(x,u)
=\chi_1(x)\int_GP_{k,\tau^2}(x,g\cdot u)(1-\chi(u))d\mu(g).
\end{equation} 
If $u\notin\set{x\in X;\, \chi(x)=1}$. Since ${\rm Supp\,}\chi_1\subset\set{x\in X;\, \chi(x)=1}$ and $\chi(x)=\chi(g\cdot x)$, for every $g\in G$, for every $x\in X$, we conclude that $g\cdot u\notin{\rm Supp\,}\chi_1$, for every $g\in G$. From this observation and notice that $P^G_{k,\tau^2}$ is $O(k^{-\infty})$ away from diagonal, we deduce that $\chi_1P^G_{k,\tau^2}(1-\chi)=O(k^{-\infty})$ and hence 
\begin{equation}\label{e-gue170227bII}
\chi_1P^G_{k,\tau^2}(1-\chi)\circ P^G_{k,\tau^2}=O(k^{-\infty}). 
\end{equation}
From \eqref{e-gue170227b} and \eqref{e-gue170227bII}, we get 
\begin{equation}\label{e-gue170227bIII}
\chi_1P^G_{k,\tau^2}\circ P^G_{k,\tau^2}=\chi_1P^G_{k,\tau^2}\chi\circ P^G_{k,\tau^2}+O(k^{-\infty}). 
\end{equation}
We can check that on $U$, 
\begin{equation}\label{e-gue170227y}
\renewcommand{\arraystretch}{1.2}
\begin{array}{ll}
&(\chi_1P^G_{k,\tau^2}\chi\circ P^G_{k,\tau^2})_s(x,y)\\
&=\int e^{ikA_1(x,u'',t)+ikA_2(u'',y,s)}\chi_1(x)g(x,\mathring{u}'',t,k)\chi(u'')g(u'',\mathring{y}'',s,k)dv(u'')ds+O(k^{-\infty}),
\end{array}
\end{equation}
where $(\chi_1P^G_{k,\tau^2}\chi\circ P^G_{k,\tau^2})_s(x,y)$ denotes the distribution kernel of the localization of $\chi_1P^G_{k,\tau^2}\chi\circ P^G_{k,\tau^2}$ with respect to $s$ (see the discussion after \eqref{sk}) and 
where $d\mu(g)dv(u'')=dV_X(x)$ on $U$. We use complex stationary phase formula of Melin-Sj\"ostrand to carry out the integral \eqref{e-gue170227y} and get 
\begin{equation}\label{e-gue170227yI}
\renewcommand{\arraystretch}{1.2}
\begin{array}{ll}
&(\chi_1P^G_{k,\tau^2}\chi\circ P^G_{k,\tau^2})(x,y)=\int e^{ikA_3(x,y,t)}a(x,y,t,k)dt+O(k^{-\infty})\ \ \mbox{on $U$}, \\
&a(x,y,t,k)\in S^{n+1-\frac{d}{2}}_{{\rm loc\,}}(1; U\times U\times I),\\
&\mbox{$a(x,y,t,k)\sim\sum^\infty_{j=0}k^{n+1-\frac{d}{2}-j}a_j(x,y,t)$ in $S^{n+1-\frac{d}{2}}_{{\rm loc\,}}(1; U\times U\times I)$},\\
&a_j(x,y,t)\in\mathcal{C}^\infty(U\times U\times I),\ \ j=0,1,2,\ldots,\\
&{\rm supp\,}_ta(x,y,t,k)\subset I,\ \ {\rm supp\,}_ta_j(x,y,t)\subset I,\ \ j=0,1,\ldots,\\
&a_0(x,x,t)\neq0,\ \ \mbox{for every $x\in Y\cap U$, $t\in\set{t\in I;\, \tau(t)\neq0}$}.
\end{array}
\end{equation}
From \eqref{e-gue170227bIII}, \eqref{e-gue170227yI} and notice that $(\chi_1P^G_{k,\tau^2}\circ P^G_{k,\tau^2})(x,y)=(\chi_1P^G_{k,\tau^2})(x,y)$, we deduce that 
\begin{equation}\label{e-gue170227yII}
\int e^{ikA_3(x,y,t)}a(x,y,t,k)dt=\int e^{ikA(x,y,t)}\chi_1(x)\hat b(x,y,t,k)dt+O(k^{-\infty})\ \ \mbox{on $U$},
\end{equation}
where $\hat b(x,y,t,k)\in S^{n+1-\frac{d}{2}}_{{\rm loc\,}}(1; U\times U\times I)$, $\hat b(x,y,t,k)\sim\sum^\infty_{j=0}k^{n+1-\frac{d}{2}-j}\hat b_j(x,y,t)$ in $S^{n+1-\frac{d}{2}}_{{\rm loc\,}}(1; U\times U\times I)$,
$\hat b_j(x,y,t)\in\mathcal{C}^\infty(U\times U\times I)$, $j=0,1,2,\ldots$, 
${\rm supp\,}_t\hat b(x,y,t,k)\subset I$, ${\rm supp\,}_t\hat b_j(x,y,t)\subset I$, $j=0,1,\ldots$, $\hat b_0(x,x,t)\neq0$, for every $x\in Y\cap U$, $t\in\set{t\in I;\, \tau(t)\neq0}$. 
Now, let $\alpha(x,y,t,k)\in S^{n+1-\frac{d}{2}}_{{\rm loc\,},{\rm cl\,}}(1; U\times U\times I)$. Without loss of generality, we assume that ${\rm supp\,}_t\alpha(x,y,t,k)\subset\set{t\in I;\, \tau(t)=1}$ and ${\rm supp\,}_{(x,y)}\alpha(x,y,t,k)\subset\set{(x,y)\in I;\, \chi_1(x)=\chi_1(y)=1}$. Let 
\[F_k(x,y):=\int e^{ikA(x,y,t)}\alpha(x,y,t,k)dt.\]
From complex stationary phase formula, we can find a classical pseudodifferential operator $E$ of order zero on $U$ such that 
\begin{equation}\label{e-gue240913yyd}
    E\circ\int e^{ikA(x,y,t)}\chi_1(x)\hat b(x,y,t,k)dt=F_k(x,y)+O(k^{-\infty}). 
    \end{equation}
From \eqref{e-gue170227yII}, \eqref{e-gue240913yyd} and by complex stationary phase formula, we get 
\begin{equation}\label{e-gue240913yydI}
\begin{split}
    &E\circ\int e^{ikA_3(x,y,t)}a(x,y,t,k)dt\\
   &=F_k(x,y)+O(k^{-\infty})\\
   &=\int e^{ikA_3(x,y,t)}\beta(x,y,t,k)dt+O(k^{-\infty}),
\end{split}    
    \end{equation}
    where $\beta(x,y,t,k)\in S^{n+1-\frac{d}{2}}_{{\rm loc\,},{\rm cl\,}}(1; U\times U\times I)$. From \eqref{e-gue240913yydI}, the theorem follows. 
\end{proof}

The following two theorems follow from \eqref{e-gue170226II}, \eqref{e-gue170226bI}, Theorem~\ref{t-gue170226cw}, complex stationary phase formula of Melin-Sj\"ostrand~\cite{ms} and some straightforward computation. We omit the details. 

\begin{thm}\label{t-gue170301w}
With the notations used above, let 
\[
\begin{split}
&F_k(x,y)=\int e^{ik A(x,y,t)}a(x,y,t,k)dt,\quad  G_k(x,y)=\int e^{ikA(x,y,t)}b(x,y,t,k)dt,\\
&a(x,y,t,k)\in S^{m}_{{\rm loc\,},{\rm cl\,}}(1; U\times U\times I),\ \quad b(x,y,t,k)\in S^{\ell}_{{\rm loc\,},{\rm cl\,}}(1; U\times U\times I),\\
&{\rm supp\,}_ta(x,y,t,k)\subset I,\ \ {\rm supp\,}_tb(x,y,t,k)\subset I.
\end{split}
\]
Let $\chi(x'')\in\mathcal{C}^\infty_c(\Omega_2\times\Omega_3\times\Omega_4)$. Then, we have 
\[\begin{split}
&\int F_k(x,u)\chi(u'')G_k(u,y)dV_X(u'')=\int e^{ikA(x,y,t)}c(x,y,t,k)dt+O(k^{-\infty}),\\
&c(x,y,t,k)\in S^{m+\ell-(n+1-\frac{d}{2})}_{{\rm loc\,},{\rm cl\,}}(1; U\times U\times I),\\
&c_0(x,x,t)=(2\pi)^{n-\frac{d}{2}+1}\abs{{\rm det\,}(R^L_x-2t\mathcal{L}_x)}^{-1}\abs{\det R_x(t)}^{\frac{1}{2}}a_0(x,x,t)b_0(x,x,t)\chi(x''),\ \ \forall x\in Y\bigcap U,
\end{split}
\]
where $\abs{\det R_x(t)}$ is in the discussion after \eqref{e-gue240913ycd} and 
$c_0$, $a_0$, $b_0$ denote the leading terms of $c$, $a$, $b$ respectively. 

Moreover, if there are $N_1, N_2\in\mathbb N$, such that $\abs{a_0(x,y,t)}\leq C\abs{(x,y)-(x_0,x_0)}^{N_1}$,  $\abs{b_0(x,y,t)}\leq C\abs{(x,y)-(x_0,x_0)}^{N_2}$, 
for all $x_0\in Y\bigcap U$, $t\in I$, where $C>0$ is a constant, then, 
\[
\abs{c_0(x,y,t)}\leq\hat C\abs{(x,y)-(x_0,x_0)}^{N_1+N_2}, 
\]
for all $x_0\in Y\bigcap U$, $t\in I$, where $\hat C>0$ is a constant.
\end{thm}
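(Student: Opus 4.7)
The plan is to substitute the oscillatory-integral representations of $F_k$ and $G_k$ into the composition, obtaining
\[
\int F_k(x,u)\chi(u'')G_k(u,y)\,dV_X(u'') = \iiint e^{ik[A(x,u,t)+A(u,y,s)]}a(x,u,t,k)b(u,y,s,k)\chi(u'')\,du''\,dt\,ds + O(k^{-\infty}),
\]
and apply the complex stationary-phase method of Melin--Sj\"ostrand \cite{ms} in the variables $(s,u'')$, treating $(x,y,t)$ as parameters. By the discussion leading to \eqref{e-gue170226IIIa}, the critical-point system
\[
\partial_s\widetilde A(u'',y,s)=0,\qquad \partial_{u''}\bigl[\widetilde A(x,u'',t)+\widetilde A(u'',y,s)\bigr]=0
\]
has a unique complex critical point $(u'',s)=(\beta(x,y,t),\delta(x,y,t))$ near the real diagonal, whose Hessian is non-degenerate by \eqref{e-gue240912yyd}. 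The critical value of the phase is $A_3(x,y,t)$ by \eqref{e-gue170226bI}, so the integral reduces to $\int e^{ikA_3(x,y,t)}c(x,y,t,k)\,dt$ for some classical symbol $c$ of order $m+\ell-(n+1-d/2)$. Applying Theorem~\ref{t-gue170226cw} then converts $A_3$ back to the original phase $A$, yielding the form claimed in the theorem.

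For the leading coefficient I would track the stationary-phase prefactor carefully. On the diagonal $u''=x=y$, $s=t$, formula \eqref{e-gue240912yyd} gives
\[
\det\operatorname{Hess}_{(s,u'')}\bigl(\widetilde A(x,u'',t)+\widetilde A(u'',y,s)\bigr)\Big|_{\text{crit}} = -(2i)^{2n-d+1}\,|\lambda_1(t)|\cdots|\lambda_d(t)|\cdot|\lambda_{d+1}(t)|^2\cdots|\lambda_n(t)|^2.
\]
The Melin--Sj\"ostrand formula contributes the factor $(2\pi/k)^{(2n-d+1)/2}/\sqrt{\det(H/(-i))}$ times the amplitude evaluated at the critical point. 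Using $|\det(R^L_x-2t\mathcal{L}_x)|=|\lambda_1(t)\cdots\lambda_n(t)|$ and the identification (from the adapted frame in Proposition~\ref{prop:coordinates}, in which the first $d$ vectors of the orthonormal rigid frame span the complexification of $\underline{\mathfrak{g}}_x$) that $|\det R_x(t)|=|\lambda_1(t)\cdots\lambda_d(t)|$, one obtains after simplification the stated prefactor $(2\pi)^{n-d/2+1}|\det(R^L_x-2t\mathcal{L}_x)|^{-1}|\det R_x(t)|^{1/2}$. Multiplying by $a_0(x,x,t)b_0(x,x,t)\chi(x'')$ gives the formula for $c_0(x,x,t)$ on $Y\cap U$.

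For the final vanishing-order assertion, I would argue that the critical points of the stationary phase lie on the diagonal $u=x=y$, so if $|a_0(x,u,t)|\leq C|(x,u)-(x_0,x_0)|^{N_1}$ and $|b_0(u,y,t)|\leq C|(u,y)-(x_0,x_0)|^{N_2}$ for all $x_0\in Y\cap U$, then at the critical point both quantities are evaluated with $u=x=y$, and the stationary-phase integration in $(s,u'')$ is an integration in transverse directions only. A Taylor expansion centered at $(x_0,x_0)$ using the non-degeneracy of the Hessian preserves the combined vanishing order, yielding $|c_0(x,y,t)|\leq \widehat C|(x,y)-(x_0,x_0)|^{N_1+N_2}$.

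The main obstacle is the bookkeeping in the leading-coefficient computation: one must carefully match the Melin--Sj\"ostrand Hessian determinant, which is written in the rigid CR frame of Proposition~\ref{prop:coordinates}, against the intrinsic geometric quantities $\det R_x(t)$ and $\det(R^L_x-2t\mathcal{L}_x)$. The cancellation of powers of $2$, $i$ and $\pi$ arising from the choice of normalization $\langle\partial_{x_j}|\partial_{x_\ell}\rangle=2\delta_{j,\ell}$ and the definitions of the curvature and Levi forms must be tracked exactly.
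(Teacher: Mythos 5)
Your proposal follows essentially the same route the paper indicates for this theorem (which it only sketches): substitute the representations, apply the Melin--Sj\"ostrand complex stationary phase formula in $(u'',s)$ using the critical-point system \eqref{e-gue170226IIIa}, the non-degeneracy \eqref{e-gue240912yyd}, the critical value $A_3$ from \eqref{e-gue170226bI} together with the infinite-order agreement \eqref{e-gue170226II}, and then Theorem~\ref{t-gue170226cw} to return to the phase $A$; the treatment of the leading term and of the vanishing-order statement is also the intended ``straightforward computation''. Two bookkeeping slips should be corrected, though they do not affect the method: you quote \eqref{e-gue240912yyd} with an extra factor $2i$ (the paper has $(2i)^{2n-d}$, not $(2i)^{2n-d+1}$), and the Gaussian prefactor should be $(2\pi/k)^{n+1-d/2}$ (stationary phase in the $2n+2-d$ variables $(u_{d+1},\ldots,u_{2n+1},s)$), not $(2\pi/k)^{(2n-d+1)/2}$, as is forced by the claimed order $m+\ell-(n+1-\tfrac{d}{2})$ of $c$.
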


\begin{thm}\label{t-gue170301wI}
With the notations used above, let 
\[\begin{split}
&\mathcal{F}_k(x,\Td y'')=\int e^{ikA(x,\Td y'',t)}\alpha(x,\Td y'',t,k),\quad \mathcal{G}_k(\Td x'',y)=\int e^{ikA(\Td x'',y,t)}\beta(\Td x'',y,t,k)dt,\\
&\alpha(x,\Td y'',t,k)\in S^{m}_{{\rm loc\,},{\rm cl\,}}(1; U\times(\Omega_3\times\Omega_4)\times I),\quad \beta(\Td x'',y,t,k)\in S^{\ell}_{{\rm loc\,},{\rm cl\,}}(1; (\Omega_3\times\Omega_4)\times U\times I).
\end{split}
\]
Let $\chi_1(\Td x'')\in\mathcal{C}^\infty_c(\Omega_3\times\Omega_4)$. Then, we have 
\[
\renewcommand{\arraystretch}{1.2}
\begin{array}{ll}
&\int\mathcal{F}_k(x,\Td u'')\chi_1(\Td u'')\mathcal{G}_k(\Td u'',y)dV_X(\Td u)=e^{ikA(x,y,t)}\gamma(x,y,t,k)+O(k^{-\infty}),\\
&\gamma(x,y,t,k)\in S^{m+\ell-(n-d+1)}_{{\rm loc\,},{\rm cl\,}}(1; U\times U\times I),\\
&\gamma_0(x,x,t)=(2\pi)^{n-d+1}\abs{{\rm det\,}(R^L_x-2t\mathcal{L}_{x})}^{-1}\abs{\det R_x(t)}\alpha_0(x,\Td x'',t)\beta_0(\Td x'',x,t)\chi_1(\Td x''),\ \ \forall x\in Y\bigcap U,
\end{array}
\]
where $\abs{\det R_x(t)}$ is in the discussion after \eqref{e-gue240913ycd} and 
$\gamma_0$, $\alpha_0$, $\beta_0$ denote the leading term of $\gamma$, $\alpha$, $\beta$ respectively. 

Moreover, if there are $N_1, N_2\in\mathbb N$, such that $\abs{\alpha_0(x,\Td y'',t)}\leq C\abs{(x,\Td y'')-(x_0,x_0)}^{N_1}$,  $\abs{\beta_0(x,\Td y'',t)}\leq C\abs{(x,\Td y'')-(x_0,x_0)}^{N_2}$, 
for all $x_0\in Y\bigcap U$, $t\in I$, where $C>0$ is a constant, then, 
\[
\abs{\gamma_0(x,y,t)}\leq\hat C\abs{(x,y)-(x_0,x_0)}^{N_1+N_2}, 
\]
for all $x_0\in Y\bigcap U$, $t\in I$, where $\hat C>0$ is a constant.
\end{thm}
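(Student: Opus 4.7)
\medskip

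\noindent\textbf{Proof plan for Theorem~\ref{t-gue170301wI}.}

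The plan is to reduce the statement to the complex stationary phase formula of Melin--Sj\"ostrand applied to the $(\tilde u'', s)$ variables, in parallel with Theorem~\ref{t-gue170301w} but with the group directions $\hat u''$ absent from the integration. I would first write the left-hand side as the oscillatory integral
\[
\int e^{ik[A(x,\tilde u'',t)+A(\tilde u'',y,s)]}\,\alpha(x,\tilde u'',t,k)\,\chi_1(\tilde u'')\,\beta(\tilde u'',y,s,k)\,dV_X(\tilde u)\,dt\,ds.
\]
Using \eqref{e-gue170226II}, the first occurrence of $A$ can be replaced by $A_1$ and the second by $A_2$ modulo $O(k^{-\infty})$, so that the phase in the $(\tilde u'',s)$ variables is exactly the function $F$ of Section~\ref{s-gue170226}. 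The critical system is then \eqref{e-gue170226III}, whose unique almost analytic solution near $(p,p)$ is $(\tilde u'',s)=(\alpha(x,y,t),\gamma(x,y,t))$, and by the very definition \eqref{e-gue170226bI} the critical value equals $A_3(x,y,t)$.

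Next, I would apply stationary phase in the $2(n-d+1)$ variables $(\tilde u'',s)$ (with $t$ kept as a genuine parameter in the amplitude), obtaining an asymptotic expansion of the form $\int e^{ikA_3(x,y,t)}\tilde\gamma(x,y,t,k)\,dt+O(k^{-\infty})$ with $\tilde\gamma\in S^{m+\ell-(n-d+1)}_{\mathrm{loc},\mathrm{cl}}$. Theorem~\ref{t-gue170226cw} then lets me rewrite this integral in the equivalent phase $A(x,y,t)$ without altering the leading symbol on the diagonal, yielding $\gamma\in S^{m+\ell-(n-d+1)}_{\mathrm{loc},\mathrm{cl}}$ as required. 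The support of $\tilde\gamma$ in $t$ is contained in $I$ by the corresponding property of $\alpha,\beta$.

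To identify the constant in $\gamma_0$, I would use the first line of \eqref{e-gue240912yyd}, which gives $|\det\mathrm{Hess}|=2^{2(n-d)}|\lambda_{d+1}(t)\cdots\lambda_n(t)|^2$ at $p$ for every $t\in I$ (the argument being continuous in $t$ by the positivity assumption \eqref{e-gue240715ycd}). Combining the stationary phase factor $(2\pi)^{n-d+1}|\det\mathrm{Hess}|^{-1/2}$ with the identities $|\det(R^L_x-2t\mathcal L_x)|=|\lambda_1(t)\cdots\lambda_n(t)|$ and $|\det R_x(t)|=2^{d}|\lambda_1(t)\cdots\lambda_d(t)|$ (the latter following from \eqref{e-gue161102} and the diagonalization in Proposition~\ref{prop:coordinates}) reproduces exactly the claimed factor $(2\pi)^{n-d+1}|\det(R^L_x-2t\mathcal L_x)|^{-1}|\det R_x(t)|$, multiplied by the value of the amplitude at the real critical point $(\tilde u'',s)=(\tilde x'',t)$, which is $\alpha_0(x,\tilde x'',t)\beta_0(\tilde x'',x,t)\chi_1(\tilde x'')$. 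The vanishing estimate for $\gamma_0$ at $(x_0,x_0)$ follows immediately: since the real critical point is $(\tilde x_0'',t)$, the leading symbol inherits the product of the given vanishing orders of $\alpha_0$ and $\beta_0$ at $(x_0,\tilde x_0'')$ and $(\tilde x_0'',x_0)$ respectively.

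The main obstacle I expect is the uniformity of the Hessian computation in $t\in I$: formula \eqref{e-gue240912yyd} was derived at a distinguished $t_0$, and verifying that the associated almost analytic phase $F$ has a nondegenerate critical point, depending smoothly on $t$, for every $t$ in the support of the cutoff is what allows the conclusion that $\gamma$ lies in a classical symbol class rather than only yielding a pointwise formula at $t=t_0$. This is handled exactly as in the proof of Theorem~\ref{t-gue170301w} by exploiting the positivity hypothesis \eqref{e-gue240715ycd} along the compact $t$-support, together with the continuity of the determinant of the Hessian in $t$, so that no new analytic input is needed beyond what was already developed in Section~\ref{s-gue170226}.
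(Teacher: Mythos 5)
Your proposal follows exactly the route the paper indicates for this theorem (whose detailed proof it omits): replace the phases by $A_1$, $A_2$ via \eqref{e-gue170226II}, apply the Melin--Sj\"ostrand stationary phase formula in the $(\Td{\Td u}'',\Td s)$ variables using the critical system \eqref{e-gue170226III} and the critical value $A_3$ from \eqref{e-gue170226bI} with the Hessian computed in the first line of \eqref{e-gue240912yyd}, and then pass back to the phase $A$ via the equivalence in Theorem~\ref{t-gue170226cw}, with the leading term and the vanishing estimate obtained by evaluating the amplitude at the real critical point. This matches the paper's argument, up to the routine bookkeeping of constants (volume density and the normalization of $R_x(t)$) that the paper itself labels a straightforward computation.
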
 

\subsection{Theorem~\ref{thm:quantandred}}\label{s-gue170303} 

Let $P_{k,X_G,\tau^2}$ be the weighted Fourier-Szeg\H{o} operator on $X_G$ as in \eqref{e-gue150807II}. Fix $p\in Y$. Let $s$ be a local CR rigid $G$-invariant trivializing section of $L$ defined on a $G$-invariant open set $D$ of $p$. Let $x=(x_1,\ldots,x_{2n+1})$ be the local coordinates as in Proposition~\ref{prop:coordinates} defined in an open set $U$ of $p$, $U\subset D$. We will use the same notations as in Section~\ref{s-gue170226}. We will identify $\Omega_3\times\Omega_4$ with an open set in $X_G$ and we will identify $p$ as a point in $X_G$. Let $\varphi_{X_G}(\Td x'',\Td y'',t)$ be the phase as in Theorem~\ref{t-gue240717yyd}. We write $A(\Td x'',\Td y'',t):=A(x,y,t)|_{(\Omega_3\times\Omega_4)\times(\Omega_3\times\Omega_4)}$. It is not difficult to see that $\ddbar_{b,X_G}A(\Td x'',\Td y'',t)$ vanishes to infinite order at $\Td x''=\Td y''$ and $A(\Td x'',\Td y'',t)$ satisfies \eqref{eq:phase}. From this observation, we can repeat the process in~\cite[Section 3.7]{Hsiao08} with minor change and deduce that 

\begin{equation}\label{e-gue240930yyd}
\mbox{$\varphi_{X_G}(\Td x'',\Td y'',t)$ and $A(\Td x'',\Td y'',t)$ are equivariant on $\Omega_3\times\Omega_4$.}
\end{equation} 
Let $s$ be a local trivializing $G$-invariant 
CR rigid sections of $L|_{X_G}$ defined on $W:=\Omega_3\times\Omega_4$.
Thus, 
\begin{equation}\label{e-gue240930ycd}
\renewcommand{\arraystretch}{1.2}
\begin{array}{ll}
&P_{X_G,k,s}(\Td x'',\Td y'')=e^{ik A(\Td x'',\Td y'',t)}b(\Td x'',\Td y'',t,k)dt+O(k^{-\infty})\ \ \mbox{on $W$},\\
&\beta(\Td x'',\Td y'',t,k)\in S^{n+1-d}_{{\rm loc\,}}(1; W\times W\times I),\\
&\mbox{$\beta(\Td x'',\Td y'',t,k)\sim\sum^\infty_{j=0}k^{n+1-d-j}b_j(\Td x'',\Td y'',t)$ in $S^{n+1-d}_{{\rm loc\,}}(1; W\times W\times I)$},\\
&\beta_j(\Td x'',\Td y'',t)\in\mathcal{C}^\infty(W\times W\times I),\ \ j=0,1,2,\ldots,\\
&{\rm supp\,}_t\beta\subset I,\ \ {\rm supp\,}_t\beta_j\subset I,\ \ j=0,1,2,\ldots,\\
&\beta_0(\Td x'',\Td x'')=(2\pi)^{-(n-d)-1}\abs{\det(R^{L_{X_G}}_{\Td x''}-2t\mathcal{L}_{X_G,\Td x''})}\tau^2(t),\ \ \forall \Td x''\in W.
\end{array}
\end{equation}

Let 
\[
f(x)=\pi^{\frac{d}{4}}\sqrt{V_{{\rm eff\,}}(x)}\abs{\det\,R_x(t)}^{-\frac{1}{4}}\in\mathcal{C}^\infty(Y)^G.
\]
Recall that $R_x$ is given by \eqref{e-gue240913ycd}. We will identify $f$ with a smooth function on $X_G$, then $f\in\mathcal{C}^\infty(X_G)$. Let
\[\begin{split}
\sigma_k:\mathcal{C}^\infty(X,L^k)&\To H^0_b(X_G,L^k_G),\\
u&\To k^{-\frac{d}{4}}P_{k,X_G,\tau^2}\circ f\circ\gamma_G\circ P^G_{k,\tau^2}u,
\end{split}
\]
where $\gamma_G: \mathcal{ C}^\infty(X,L^k)^G\To\mathcal{C}^\infty(X_G)$ is the natural restriction. Let $R^{L_{X_G}}$ be the curvature of $L_{X_G}:=L|_{X_G}$ induced by $h^L$ and let $\mathcal{L}_{X_G}$ be the Levi form on $X_G$ induced by $\omega_{0,X_G}:=\omega_0|_{X_G}$. We can now prove 

\begin{thm}\label{t-gue170304ry}
With the notations used above, if $y\notin Y$, then for any open set $D$ of $y$ with $\ol D\bigcap Y=\emptyset$, we have
\begin{equation}\label{e-gue170304ryI}
\sigma_k=O(k^{-\infty})\ \ \mbox{on $X_G\times D$.}
\end{equation} 

Let $p\in Y$. Let $s$ be a local trivializing $G$-invariant 
CR rigid section of $L$ defined on an open set $D$ of $p$ in $X$. Let $x=(x_1,\ldots,x_{2n+1})$ be the local coordinates as in Proposition~\ref{prop:coordinates} defined 
in an open set $U$ of $p$, $U\subset D$. Let $\sigma_{k,s}$ be the localization of $\sigma_k$ with respect to $s$. Then, 
\begin{equation}\label{e-gue170304ryIII}
\renewcommand{\arraystretch}{1.2}
\begin{array}{ll}
&\sigma_{k,s}(\Td x'',y)=\int e^{ikA(\Td x'',y'',t)}\alpha(\Td x'',y'',k,t)+O(k^{-\infty})\ \ \mbox{on $W\times U$},\\
&\alpha(\Td x'',y'',t,k)\in S^{n-\frac{3}{4}d+1}_{{\rm loc\,}}(1; W\times U\times I),\\
&\mbox{$\alpha(\Td x'',y'',t,k)\sim\sum^\infty_{j=0}k^{n+1-\frac{3}{4}d-j}\alpha_j(\Td x'',y'',t)$ in $S^{n+1-\frac{3}{4}d}_{{\rm loc\,}}(1; W\times U\times I)$},\\
&\alpha_j(\Td x'',y'',t)\in\mathcal{C}^\infty(W\times U\times I),\ \ j=0,1,2,\ldots,\\
&{\rm supp\,}_t\alpha(\Td x'',y'',t,k)\subset I,\ \ {\rm supp\,}_t\alpha_j(\Td x'',y'',t,k)\subset I, \ \ j=0,1,\ldots,
\end{array}
\end{equation}
\begin{equation}\label{e-gue170304rc}
\alpha_0(\Td x'',\Td x'',t)=2^{-n-1+d}\pi^{\frac{3d}{4}-n-1}\frac{1}{\sqrt{V_{{\rm eff\,}}(\Td x'')}}
\abs{\det\,(R^{L_{X_G}}_{\Td x''}-2t\mathcal{L}_{\Td x''})}\abs{\det\,R_{\Td x''}}^{\frac{1}{4}}\tau^4(t),\ \ \forall \Td x''\in W,
\end{equation}
where $W=\Omega_3\times\Omega_4$, $\Omega_3$ and $\Omega_4$ are open sets as in the beginning of Section~\ref{s-gue170226}. 
\end{thm}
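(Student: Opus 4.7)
The plan has two parts: an $O(k^{-\infty})$ bound when $y$ stays away from $Y$, and a composition argument near $Y$ combining the asymptotic expansions of $P^G_{k,\tau^2}$ and $P_{k,X_G,\tau^2}$ through Theorem~\ref{t-gue170301wI}.

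First, I would deduce \eqref{e-gue170304ryI} from the off-$Y$ vanishing in Theorem~\ref{thm:Gszego} together with the symmetry of the orthogonal projector $P^G_{k,\tau^2}$. Concretely, \eqref{e-gue240903yydII} gives $\chi P^G_{k,\tau^2}=O(k^{-\infty})$ for any $\chi$ with ${\rm supp\,}\chi\cap Y=\emptyset$; since $P^G_{k,\tau^2}$ is symmetric, taking adjoints gives $P^G_{k,\tau^2}\chi=O(k^{-\infty})$ as well, so the distribution kernel satisfies $P^G_{k,\tau^2}(\cdot,y)=O(k^{-\infty})$ uniformly for $y\in\overline D$. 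Post-composition with $\gamma_G$, multiplication by $f$, and $P_{k,X_G,\tau^2}$ introduces only $k$-polynomial loss, so this vanishing persists and yields $\sigma_k=O(k^{-\infty})$ on $X_G\times D$.

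Near $p\in Y$, I work in the coordinates of Proposition~\ref{prop:coordinates}. Using the off-$Y$ vanishing just established to restrict the middle integration to $W$, the kernel of $\sigma_k$ equals, up to $O(k^{-\infty})$,
\[\sigma_{k,s}(\Td x'',y)=k^{-d/4}\int_W P_{k,X_G,\tau^2,s}(\Td x'',\Td u'')\,f(\Td u'')\,P^G_{k,\tau^2,s}((0,\Td u''),y)\,dV_{X_G}(\Td u'').\]
Substituting \eqref{e-gue240930ycd} for the first factor and \eqref{e-gue240903yydI} for the second (after using \eqref{e-gue240930yyd} to align both phases with the restriction of $A$ to $X_G$), the expression falls directly under Theorem~\ref{t-gue170301wI}. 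The symbol orders combine as $(n+1-d)+(n+1-d/2)-(n-d+1)=n+1-d/2$, and the prefactor $k^{-d/4}$ shifts the order to $n+1-3d/4$, producing the claimed $\alpha\in S^{n+1-3d/4}_{\mathrm{loc}}(1;W\times U\times I)$ with $t$-support in $I$ inherited from the factors.

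The leading term then comes from the explicit composition formula in Theorem~\ref{t-gue170301wI}:
\[\alpha_0(\Td x'',\Td x'',t)=(2\pi)^{n-d+1}|\det(R^L_{\Td x''}-2t\mathcal L_{\Td x''})|^{-1}|\det R_{\Td x''}(t)|\cdot\beta_{X_G,0}(\Td x'',\Td x'',t)\cdot f(\Td x'')\cdot g_0(\Td x'',\Td x'',t),\]
with $\beta_{X_G,0}$ from \eqref{e-gue240930ycd} and $g_0$ from Theorem~\ref{thm:2}. Upon expansion, the factor $|\det(R^L_{\Td x''}-2t\mathcal L_{\Td x''})|$ inside $g_0$ cancels its reciprocal coming from the composition formula; $V_{\mathrm{eff}}^{-1}$ in $g_0$ combines with $\sqrt{V_{\mathrm{eff}}}$ in $f$ to give $1/\sqrt{V_{\mathrm{eff}}}$; the three exponents $+1,-1/4,-1/2$ of $|\det R_{\Td x''}(t)|$ sum to $1/4$; the powers of $\pi$ collapse to $\pi^{3d/4-n-1}$; the factor $2^{-n-1+d}$ is retained from $g_0$; and $\tau^2\cdot\tau^2=\tau^4$. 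This reproduces \eqref{e-gue170304rc} exactly. The main obstacle is the careful matching of volume-form conventions: Theorem~\ref{t-gue170301wI} presupposes that the middle integration uses $dV_X$ restricted to the coordinates $\Td u''$ parametrizing $X_G$, while $\sigma_k$ is built by first restricting via $\gamma_G$ to $Y$ and then descending to $X_G$. Verifying that the Fubini decomposition $dV_X|_Y=dV_{X_G}\wedge dV_{Y_x}$ is consistent with the appearance of $V_{\mathrm{eff}}$ in both $g_0$ (arising from the stationary-phase integration over the $G$-orbit direction in Theorem~\ref{thm:2}) and $f$ (by design) is the key non-trivial bookkeeping step; once this is in place, the cancellations above are automatic.
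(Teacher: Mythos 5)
Your proposal is correct and follows essentially the paper's own route: the first part from the off-$Y$ negligibility of $P^G_{k,\tau^2}$, the second by composing the expansion \eqref{e-gue240930ycd} of $P_{k,X_G,\tau^2}$ with that of $P^G_{k,\tau^2}$ from Theorem~\ref{thm:Gszego} through the composition formula of Theorem~\ref{t-gue170301wI}, with leading-term bookkeeping that reproduces \eqref{e-gue170304rc} exactly. One minor correction: restricting the middle $X_G$-integration to $W$ is not a consequence of the off-$Y$ vanishing (the middle variable already lies on $Y$); it follows from off-diagonal decay, which the paper implements via the cutoffs $\chi,\chi_1$ together with Theorem~\ref{t-gue240813yyd} applied on $X_G$.
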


\begin{proof}
Note that $P^G_{k,\tau^2}=O(k^{-\infty})$ away $Y$. From this observation, we get \eqref{e-gue170304ryI}. 

Fix $u=(u_1,\ldots,u_{2n+1})\in Y\bigcap U$. From \eqref{e-gue170304ryI}, we only need to show that \eqref{e-gue170304ryIII} and \eqref{e-gue170304rc} hold near $u$ and  we may assume that $u=(0,\ldots,0,u_{2d+1},\ldots,u_{2n},u_{2n+1})=\Td u''$. Let $V$ be a small neighborhood of $u$. 
 Let $\chi(\Td u'')\in\mathcal{C}^\infty_c(\Omega_3\times\Omega_4)$. From \eqref{e-gue170227c}, we can extend $\chi(\Td x'')$ to 
\[Q=\set{g\cdot x;\, g\in G, x\in \Omega_3\times\Omega_4}\]
by $\chi(g\cdot\Td x''):=\chi(\Td x'')$, for every $g\in G$. Assume that $\chi=1$ on some neighborhood of $V$. Let $V_G:=V/G$ and let $\pi: V\To V_G$ be the natural projection. 
Let $\chi_1\in\mathcal{C}^\infty(X_G)$ with $\chi_1=1$ on some neighborhood 
of $V_G$ and ${\rm Supp\,}\chi_1\subset\set{\pi(x)\in Y_G;\, x\in Y, \chi(x)=1}$. We have 
\begin{equation}\label{e-gue170227bq}
\renewcommand{\arraystretch}{1.3}
\begin{array}{cl}
\chi_1\sigma_k&=k^{-\frac{d}{4}}\chi_1P_{k,X_G,\tau^2}\circ f\circ\gamma_G\circ P^G_{k,\tau^2}\\
&=k^{-\frac{d}{4}}\chi_1P_{k,X_G,\tau^2}\circ f\circ\gamma_G\circ\chi P^G_{k,\tau^2}\\
&\quad+k^{-\frac{d}{4}}\chi_1P_{k,X_G,\tau^2}\circ f\circ\gamma_G\circ(1-\chi)P^G_{k,\tau^2}.
\end{array}
\end{equation}
If $u\in Y$ but $u\notin\set{x\in X;\, \chi(x)=1}$. Since ${\rm Supp\,}\chi_1\subset\set{\pi(x)\in X;\, x\in Y, \chi(x)=1}$ and $\chi(x)=\chi(g\cdot x)$, for every $g\in G$, for every $x\in X$, we conclude that $\pi(u)\notin{\rm Supp\,}\chi_1$. From this observation, we get 
\begin{equation}\label{e-gue170227bIIq}
k^{-\frac{d}{4}}\chi_1P_{k,X_G,\tau^2}\circ f\circ\gamma_G\circ(1-\chi)P^G_{k,\tau^2}=O(k^{-\infty})\ \ \mbox{on $X_G\times X$}. 
\end{equation}
From \eqref{e-gue170227bq} and \eqref{e-gue170227bIIq}, we get 
\[
\chi_1\sigma_k=k^{-\frac{d}{4}}\chi_1P_{k,X_G,\tau}\circ f\circ\gamma_G\circ\chi P^G_{k,\tau}+O(k^{-\infty})\ \ \mbox{on $X_G\times X$}. 
\]
From Theorem~\ref{thm:Gszego} and \eqref{e-gue240930ycd}, we can check that on $U$, 
\begin{equation}\label{e-gue170227yq}
\chi_1\sigma_{k,s}(\Td x'',y)=\int e^{ikA(\Td x'',\Td v'',t)+ikA(\Td v'',y,s)}\chi_1(\Td x)\beta(\Td x'',\Td v'',k,t)\hat b(\Td v'',y,k,s)dV_{X_G}(\Td v'')ds+O(k^{-\infty}),
\end{equation}
where $\hat b(\Td v'',y,s,k)=\Bigr(f\circ\gamma_G\circ\chi(\Td v'')\circ g\Bigr)(\Td v'',y,s,k)$, $g$ is the symbol as in Theorem~\ref{thm:Gszego}. From \eqref{e-gue170227yq} and Theorem~\ref{t-gue170301wI}, we see that \eqref{e-gue170304ryIII} and \eqref{e-gue170304rc} hold near $u$. The theorem follows.    
\end{proof} 

Let  
\[
F_k:=\sigma_k^*\sigma_k: \mathcal{C}^\infty(X,L^k)\To \mathcal{H}^0_b(X,L^k)^G,\quad \hat F_k:=\sigma_k\sigma^*_k: \mathcal{C}^\infty(X_G)\To\mathcal{H}^{0}_{b}(X_G,L^k_{X_G}).
\]
From Theorem~\ref{t-gue170301w}, Theorem~\ref{t-gue170301wI}, we can repeat the proof of Theorem~\ref{t-gue170304ry} with minor change and deduce the following two theorems

\begin{thm}\label{t-gue170305a}
With the notations used above, if $y\notin Y$, then for any open set $D$ of $y$ with $\ol D\bigcap Y=\emptyset$, we have
$
F_k=O(k^{-\infty})\ \ \mbox{on $X\times D$.}
$
Let $p\in Y$. Let $s$ be a local trivializing $G$-invariant 
CR rigid sections of $L$ defined on an open sets $D$ of $p$ in $X$. Let $x=(x_1,\ldots,x_{2n+1})$ be the local coordinates as in Proposition~\ref{prop:coordinates} defined 
in an open set $U$ of $p$, $U\subset D$. Let $F_{k,s}$ be the localization of $\sigma_k$ with respect to $s$. Then, 
\[
\renewcommand{\arraystretch}{1.2}
\begin{array}{ll}
&F_k(x,y)=e^{ik A(x'',y'',t)}a(x'',y'',t,k)+O(k^{-\infty})\ \ \mbox{on $U\times U$},\\
&a(x'',y'',t,k)\in S^{n+1-\frac{d}{2}}_{{\rm loc\,}}(1; U\times U\times I),\\
&\mbox{$a(x'',y'',t,k)\sim\sum^\infty_{j=0}k^{n+1-\frac{d}{2}-j}a_j(\Td x'',y'',t)$ in $S^{n+1-\frac{d}{2}}_{{\rm loc\,}}(1; U\times U\times I)$},\\
&a_j(x'',y'',t)\in\mathcal{C}^\infty(U\times U\times I),\ \ j=0,1,2,\ldots,\\
&{\rm supp\,}_ta\subset I,\ \ {\rm supp\,}a_j\subset I,\ \ j=0,1,2,\ldots,
\end{array}
\]
and
\begin{equation}\label{e-gue170305bII}
a_0(\Td x'',\Td x'',t)=2^{-n-1+d}\,\frac{1}{V_{\mathrm{eff}}(\Td x'')}\,\lvert \det R_{\Td x''}(t)\rvert^{-1/2} \pi^{-n-1+d/2}\,\lvert \det(R^L_x-2t\mathcal{L}_{\Td x''})\rvert\tau^8(t),
\end{equation}
for all $\Td x''\in U$.
\end{thm}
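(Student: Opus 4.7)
The plan is to mimic the proof of Theorem~\ref{t-gue170304ry}, with the composition now being $\sigma_k^* \sigma_k$ rather than $P_{k,X_G,\tau^2} \circ f \circ \gamma_G \circ P^G_{k,\tau^2}$. For the off-diagonal claim, if $\overline{D} \cap Y = \emptyset$ then Theorem~\ref{t-gue170304ry} gives $\sigma_k(\Td u'', y) = O(k^{-\infty})$ uniformly for $y \in D$; writing the Schwartz kernel as
\[F_k(x,y) = \int_{X_G} \overline{\sigma_k(\Td u'', x)}\, \sigma_k(\Td u'', y)\, dV_{X_G}(\Td u'')\]
immediately produces $F_k = O(k^{-\infty})$ on $X \times D$, and the symmetric statement for $x$ follows from $F_k^* = F_k$.

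Near a point $p \in Y$ I would work in the coordinates of Proposition~\ref{prop:coordinates} on an open set $U$, and insert $G$-invariant cut-offs $\chi \in \mathcal{C}^\infty_c(\Omega_2 \times \Omega_3 \times \Omega_4)$ and $\chi_1 \in \mathcal{C}^\infty_c(X_G)$ exactly as in the proof of Theorem~\ref{t-gue170304ry}. The off-diagonal decay of $P^G_{k,\tau^2}$ and of $P_{k,X_G,\tau^2}$ shows that the contribution of $(1-\chi)$ and $(1-\chi_1)$ is $O(k^{-\infty})$, so the analysis reduces to localized kernels supported in $U$ and in a chart $W = \Omega_3 \times \Omega_4 \subset X_G$. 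Substituting the expansion \eqref{e-gue170304ryIII} for both copies of $\sigma_k$ yields an oscillatory integral on $U \times U$ whose combined phase is $-\overline{A(\Td u'', x'', t_1)} + A(\Td u'', y'', t_2)$ and whose amplitude is the product $\overline{\alpha(\Td u'', x'', t_1, k)}\, \alpha(\Td u'', y'', t_2, k)$ of order $2(n+1-3d/4)$.

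Stationary phase in $(\Td u'', t_1)$ produces a complex critical-point system; once recast in the form of \eqref{e-gue170226III}, Theorem~\ref{t-gue170226cw} shows that its critical value is a phase equivalent to $A(x'', y'', t)$ in the sense of Definition~\ref{d-gue240909yyd}, and Theorem~\ref{t-gue170301wI} carries out the composition to give $a(x'', y'', t, k) \in S^{n+1-d/2}_{\rm loc}(1; U \times U \times I)$, the order $2(n+1-3d/4) - (n-d+1) - d/2 = n + 1 - d/2$ accounting for the two symbol orders, the drop from integration on $X_G$, and the normalization $k^{-d/2}$ in $F_k = \sigma_k^* \sigma_k$. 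The leading value at $\Td x'' \in Y \cap U$ is computed from Theorem~\ref{t-gue170301wI}:
\[a_0(\Td x'', \Td x'', t) = (2\pi)^{n-d+1}\, |\det(R^L_{\Td x''} - 2t\mathcal{L}_{\Td x''})|^{-1}\, |\det R_{\Td x''}(t)| \cdot |\alpha_0(\Td x'', \Td x'', t)|^2.\]
Plugging in \eqref{e-gue170304rc} and using the determinant splitting
\[|\det(R^L_{\Td x''} - 2t\mathcal{L}_{\Td x''})| = |\det R_{\Td x''}(t)|^{1/2}\, |\det(R^{L_{X_G}}_{\Td x''} - 2t\mathcal{L}_{X_G, \Td x''})|,\]
which follows from the decomposition $T^{1,0}X|_Y \cong \underline{\mathfrak{g}}_{\mathbb C}|_Y \oplus T^{1,0}X_G$ provided by Lemma~\ref{l-gue240720yyd}, collapses the expression to \eqref{e-gue170305bII}; the $\tau^8$ factor comes from the four Fourier--Szeg\H{o} operators (two $\tau^2$'s in each $\sigma_k$).

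The main obstacle is the first half of the stationary-phase step: the phase coming from $\sigma_k^*$ is $-\overline{A}$ rather than $A$, so Theorem~\ref{t-gue170301wI} does not apply verbatim. One must either extend the construction of the equivalent phase $A_3$ in \eqref{e-gue170226bI} to cover the mixed system with a conjugate phase on one side, or alternatively rewrite $\sigma_k^* \sigma_k = k^{-d/2}\, P^G_{k,\tau^2}\, (\gamma_G^* \bar f\, P_{k,X_G,\tau^2}^{\,2}\, f \gamma_G)\, P^G_{k,\tau^2}$ and compose outward using Theorems~\ref{t-gue170301w} and~\ref{t-gue170301wI} applied only to kernels with phase $A$, treating the middle factor by an analogue of Theorem~\ref{t-gue170301w} on $X_G$. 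The remaining bookkeeping of the constants $2^\bullet$, $\pi^\bullet$, determinant factors, and the effective-volume term $V_{\rm eff}$ is routine but must be carried out carefully.
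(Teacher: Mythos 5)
Your overall strategy is the one the paper intends: its proof of this theorem is literally ``repeat the proof of Theorem~\ref{t-gue170304ry} with minor change, using Theorems~\ref{t-gue170301w} and~\ref{t-gue170301wI}'', i.e.\ off-diagonal decay of $\sigma_k$, localization by $G$-invariant cut-offs, and one composition over $X_G$; your second proposed remedy for the conjugate-phase problem (use self-adjointness of the weighted Fourier--Szeg\H{o} projectors so that only phase-$A$ kernels and the already-proved composition theorems enter) is exactly the intended ``minor change'', although you leave it as an unexecuted alternative rather than carrying it out. The off-diagonal part of your argument is fine.

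The quantitative part, however, does not work as written, and it is the substance of the theorem: the precise diagonal value \eqref{e-gue170305bII} is what later forces $R_k=F_k-P^G_{k,\tau^8}$ to vanish to first order on ${\rm diag\,}(Y\times Y)$ (see \eqref{e-gue170305fIII} and Theorem~\ref{t-gue241015yyd}). First, your displayed order count $2(n+1-\tfrac{3d}{4})-(n-d+1)-\tfrac d2=n+1-\tfrac d2$ is false (the left side equals $n+1-d$); the symbols $\alpha$ in \eqref{e-gue170304ryIII} already contain the $k^{-d/4}$ normalization of each $\sigma_k$, so the extra $-\tfrac d2$ must not be subtracted, and the correct count is $2(n+1-\tfrac{3d}{4})-(n-d+1)=n+1-\tfrac d2$. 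Second, and more seriously, the determinant splitting you invoke, $\abs{\det(R^L_{\Td x''}-2t\mathcal{L}_{\Td x''})}=\abs{\det R_{\Td x''}(t)}^{1/2}\,\abs{\det(R^{L_{X_G}}_{\Td x''}-2t\mathcal{L}_{X_G,\Td x''})}$, is wrong: under the paper's normalization (forced by comparing \eqref{e-gue240814yyd} with \eqref{e-gue241007yyd}, which gives $\abs{\det R_x(t)}=\abs{\lambda_1(x,t)\cdots\lambda_d(x,t)}$) the splitting induced by $HX|_Y=J\underline{\mathfrak{g}}\oplus\underline{\mathfrak{g}}\oplus H^HY$ carries $\abs{\det R_{\Td x''}(t)}$ to the \emph{first} power, the square root of $\det R$ being the feature of the half-dimensional composition in Theorem~\ref{t-gue170301w}, not of this splitting. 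With your exponent $\tfrac12$, inserting \eqref{e-gue170304rc} into the leading-term formula of Theorem~\ref{t-gue170301wI} produces $2^{-n-1+d}\pi^{d/2-n-1}V_{\mathrm{eff}}(\Td x'')^{-1}\abs{\det(R^L_{\Td x''}-2t\mathcal{L}_{\Td x''})}\abs{\det R_{\Td x''}(t)}^{+1/2}\tau^8(t)$, which differs from \eqref{e-gue170305bII} by the factor $\abs{\det R_{\Td x''}(t)}$; with exponent $1$ the computation does collapse to \eqref{e-gue170305bII} (and, as a check, then agrees with the diagonal value of $P^G_{k,\tau^8}$ given by Theorem~\ref{thm:2}). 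So the leading-coefficient verification must be redone with the corrected splitting.
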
 

\begin{thm}\label{t-gue170305aI}
Let $p\in Y$. Let $s$ be a local trivializing $G$-invariant 
CR rigid section of $L$ defined on an open set $D$ of $p$ in $X$ and let $x=(x_1,\ldots,x_{2n+1})$ be the local coordinates as in Proposition~\ref{prop:coordinates} defined 
in an open set $U$ of $p$, $U\subset D$. Then, 
\[
\renewcommand{\arraystretch}{1.2}
\begin{array}{ll}
&\hat F_k(\Td x'',\Td y'')=\int e^{ikA(\Td x'',\Td y'',t)}\hat a(\Td x'',\Td y'',t,k)dt+O(k^{-\infty})\ \ \mbox{on $W\times W$},\\
&\hat a(\Td x'',\Td y'',t,k)\in S^{n+1-d}_{{\rm loc\,}}(1; W\times W\times I),\\
&\mbox{$\hat a(\Td x'',\Td y'',t,k)\sim\sum^\infty_{j=0}k^{n+1-d-j}\hat a_j(\Td x'',\Td y'',t)$ in $S^{n-d}_{{\rm loc\,}}(1; W\times W\times I)$},\\
&\hat a_j(\Td x'',\Td y'',t)\in\mathcal{C}^\infty(W\times W\times I),\ \ j=0,1,2,\ldots,\\
&{\rm supp\,}_t\hat a\subset I,\ \ {\rm supp\,}_t\hat a_j\subset I,\ \ j=0,1,2,\ldots,\\
&\hat a_0(\Td x'',\Td x'',t)=2^{-n+\frac{3}{2}d-1}\pi^{d-n-1}\lvert \det(R^{L_{X_G}}_{\Td x''}-2t\mathcal{L}_{X_G,\Td x''})\rvert
\tau^8(t),\ \ \forall \Td x''\in W,
\end{array}
\]
where $W=\Omega_3\times\Omega_4$, $\Omega_3$ and $\Omega_4$ are open sets as in the beginning of Section~\ref{s-gue170226}. 
\end{thm}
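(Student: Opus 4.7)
The plan is to show that the transfer operator
\[\sigma_k:=k^{-d/4}\,P_{k,X_G,\tau^2}\circ f\circ\gamma_G\circ P^G_{k,\tau^2}\]
constructed before Theorem~\ref{t-gue170304ry} induces, for $k$ large, an isomorphism between each eigenspace $\mathcal{H}^0_{b,\lambda}(X,L^k)^G$ and the corresponding $\mathcal{H}^0_{b,\lambda}(X_G,L^k_G)$. The key ingredients are the kernel asymptotics of $F_k=\sigma_k^*\sigma_k$ and $\hat F_k=\sigma_k\sigma_k^*$ in Theorems~\ref{t-gue170305a} and \ref{t-gue170305aI}, together with the observation that their diagonal principal symbols match those of the weighted Fourier--Szeg\H{o} projectors $P^G_{k,\tau^8}$ on $X$ and $P_{X_G,k,\tau^8}$ on $X_G$ (that is, the operators obtained by substituting $\tau^4$ for $\tau$ in \eqref{e-gue241015ycdy} and its $X_G$-analogue).

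First I would verify that $\sigma_k$ intertwines $-iT$ with $-iT_{X_G}$ and so restricts to a linear map $\sigma_{k,\lambda}:\mathcal{H}^0_{b,\lambda}(X,L^k)^G\To\mathcal{H}^0_{b,\lambda}(X_G,L^k_G)$. Indeed $T$ is tangent to $Y$ (since $\omega_0(T)=-1$ and $\hat\mu_t$ is $G\times\mathbb{R}$-invariant), commutes with the $G$-action, and therefore descends to $T_{X_G}$; the auxiliary weight $f$ is built from rigid invariant data, so $T_{X_G}f=0$. Consequently, for $u\in\mathcal{H}^0_{b,\lambda}(X,L^k)^G$ the section $f\cdot\gamma_G u$ is itself a $\lambda$-eigenvector of $-iT_{X_G}$, and using $P^G_{k,\tau^2}u=\tau^2(\lambda/k)u$ one obtains
\[\sigma_k u=k^{-d/4}\tau^4(\lambda/k)\,\Pi^{X_G}_\lambda\bigl(f\cdot\gamma_G u\bigr)\in\mathcal{H}^0_{b,\lambda}(X_G,L^k_G),\]
where $\Pi^{X_G}_\lambda$ is the orthogonal projection onto $\mathcal{H}^0_{b,\lambda}(X_G,L^k_G)$.

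Second, I would establish the quantitative asymptotics
\[(F_k u\mid u)_k=\bigl(\tau^8(\lambda/k)+O(k^{-1})\bigr)\|u\|_k^2,\qquad(\hat F_k v\mid v)=\bigl(\tau^8(\lambda/k)+O(k^{-1})\bigr)\|v\|^2,\]
for $u\in\mathcal{H}^0_{b,\lambda}(X,L^k)^G$ and $v\in\mathcal{H}^0_{b,\lambda}(X_G,L^k_G)$. The diagonal leading symbol in \eqref{e-gue170305bII} coincides with the leading symbol of $P^G_{k,\tau^8}$ obtained by substituting $\tau^8$ for $\tau^2$ in Theorem~\ref{thm:2}; both operators are oscillatory integrals with the same phase $A$, so $F_k-P^G_{k,\tau^8}$ lies in the same class but with symbol of one order lower in $k$. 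Since $P^G_{k,\tau^8}$ acts as the scalar $\tau^8(\lambda/k)$ on $\mathcal{H}^0_{b,\lambda}(X,L^k)^G$, the first estimate follows. The second estimate is the symmetric statement using Theorem~\ref{t-gue170305aI}. Now fix any subinterval $J\Subset I$ on which $\tau\equiv 1$. For $\lambda/k\in J$ and $k$ sufficiently large, both quadratic forms are bounded below by $\tfrac{1}{2}\|\cdot\|^2$. This forces $\sigma_{k,\lambda}$ to be simultaneously injective (giving $\dim\mathcal{H}^0_{b,\lambda}(X,L^k)^G\leq\dim\mathcal{H}^0_{b,\lambda}(X_G,L^k_G)$) and surjective (the reverse inequality), proving \eqref{e-gue241017ycdu}. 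The ``moreover'' clauses are automatic: injectivity forces $\mathcal{H}^0_{b,\lambda}(X,L^k)^G=0$ whenever $\mathcal{H}^0_{b,\lambda}(X_G,L^k_G)=0$, and surjectivity gives the converse. Uniformity of $k_0$ over $\lambda\in(ka,kb)$ is achieved by exhausting $(a,b)$ by such subintervals $J$ and choosing a corresponding family of cutoff functions $\tau$, then combining the finitely many resulting thresholds.

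The main obstacle lies in the second step: making the identification $F_k\sim P^G_{k,\tau^8}+O(k^{-1})$ quantitative in operator norm on the relevant eigenspaces. One must verify, beyond the matching of the diagonal leading terms, that the full principal symbols (as functions of $(x,y,t)$) agree at every order that contributes to the leading behaviour, and that the Gaussian off-diagonal decay from Theorem~\ref{t-gue170105I} controls the remainder uniformly through a stationary-phase argument in $t$ localized at $t=\lambda/k$. This is the analogue, in the present torsion-free CR setting, of the sharp symbol identification carried out in \cite{hsiaohuang} and \cite{hmm}, and combined with the isometry scheme from \cite{mz} yields the desired ``quantization commutes with reduction'' statement.
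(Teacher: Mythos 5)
Your proposal does not prove the statement at hand. Theorem~\ref{t-gue170305aI} is itself a kernel asymptotic: it asserts that $\hat F_k=\sigma_k\sigma_k^*$ is, modulo $O(k^{-\infty})$, an oscillatory integral on $W\times W$ with the phase $A(\Td x'',\Td y'',t)$ and a symbol of order $n+1-d$ whose leading diagonal value is $2^{-n+\frac{3}{2}d-1}\pi^{d-n-1}\lvert\det(R^{L_{X_G}}_{\Td x''}-2t\mathcal{L}_{X_G,\Td x''})\rvert\tau^8(t)$. What you have written instead is a sketch of the deduction of Theorem~\ref{thm:quantandred} (the isomorphism between the eigenspaces $\mathcal{H}^0_{b,\lambda}(X,L^k)^G$ and $\mathcal{H}^0_{b,\lambda}(X_G,L^k_G)$), and in your very first paragraph you list ``the kernel asymptotics of $F_k$ and $\hat F_k$ in Theorems~\ref{t-gue170305a} and~\ref{t-gue170305aI}'' among your key ingredients. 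In other words, you assume the statement you were asked to prove and argue for a downstream consequence; nothing in your text establishes the oscillatory-integral representation of $\hat F_k$, the order $n+1-d$ of its symbol, or the displayed formula for $\hat a_0$ on the diagonal.

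The argument the paper intends is a composition computation. One substitutes into $\hat F_k=\sigma_k\sigma_k^*$ the kernel expansion of $\sigma_k$ from Theorem~\ref{t-gue170304ry} (phase $A(\Td x'',y'',t)$, symbol $\alpha$ of order $n+1-\tfrac{3}{4}d$ with leading term \eqref{e-gue170304rc}) and the conjugate-transposed expansion for $\sigma_k^*$, localizes the integration over $X$ to a neighborhood of $Y$ (using that $\sigma_k=O(k^{-\infty})$ away from $Y$) and factors out the $G$-orbit directions, and then applies the stationary-phase composition result of Theorem~\ref{t-gue170301w}. That theorem produces a single oscillatory integral with the same phase, drops the symbol order by $n+1-\tfrac{d}{2}$ (so $2(n+1-\tfrac{3}{4}d)-(n+1-\tfrac{d}{2})=n+1-d$, matching the claim), and multiplies the diagonal leading symbols by the factor $(2\pi)^{n-\frac{d}{2}+1}\lvert\det(R^L_x-2t\mathcal{L}_x)\rvert^{-1}\lvert\det R_x(t)\rvert^{1/2}$; inserting \eqref{e-gue170304rc} and the relation between $\lvert\det(R^L_x-2t\mathcal{L}_x)\rvert$, $\lvert\det R_x(t)\rvert$, $V_{\rm eff}$ and $\lvert\det(R^{L_{X_G}}_{\Td x''}-2t\mathcal{L}_{X_G,\Td x''})\rvert$ yields the stated $\hat a_0$. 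None of this appears in your proposal, so as an answer to the assigned statement it has a fundamental gap: it is circular.
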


Let 
\begin{equation}\label{e-gue241015yyd}
R_k:=F_k-P^G_{k,\tau^8}:\mathcal{C}^\infty(X,L^k)\To\mathcal{H}^0_{b}(X,L^k)^G.\end{equation}
 Our next goal is to show that for $k$ large, $I+R_k:\mathcal{C}^\infty(X,L^k)\To\mathcal{C}^\infty(X,L^k)$ is injective. Let $\norm{\cdot}_k$ be the $L^2$ norm induced by $(\,\cdot\,|\,\cdot\,)_k$. 

 From Theorem~\ref{t-gue170305a}, we see that if $y\notin Y$, then for any open set $D$ of $y$ with $\ol D\bigcap Y=\emptyset$, we have
\begin{equation}\label{e-gue170305f}
R_k=O(k^{-\infty})\ \ \mbox{on $X\times D$.}
\end{equation}

Let $p\in Y$. Let $s$ be a local trivializing $G$-invariant 
CR rigid section of $L$ defined on an open set $D$ of $p$ in $X$ and let $x=(x_1,\ldots,x_{2n+1})$ be the local coordinates as in Proposition~\ref{prop:coordinates} defined 
in an open set $U$ of $p$, $U\subset D$. Then, 
\begin{equation}\label{e-gue170305fII}
\renewcommand{\arraystretch}{1.2}
\begin{array}{ll}
&R_k(x,y)=e^{ikA(x'',y'',t)}r(x'',y'',t,k)+O(k^{-\infty})\ \ \mbox{on $U\times U$},\\
&r(x'',y'',t,k)\in S^{n+1-\frac{d}{2}}_{{\rm loc\,}}(1; U\times U\times I),\\
&\mbox{$r(x'',y'',t,k)\sim\sum^\infty_{j=0}k^{n+1-\frac{d}{2}-j}r_j(x'',y'')$ in $S^{n-\frac{d}{2}}_{{\rm loc\,}}(1; U\times U\times I)$},\\
&r_j(x'',y'',t)\in\mathcal{C}^\infty(U\times U\times I),\ \ j=0,1,2,\ldots,\\
&{\rm supp\,}_tr\subset I,\ \ {\rm supp\,}_tr_j\subset I,\ \ j=0,1,\ldots.
\end{array}
\end{equation}
Moreover, from \eqref{e-gue241007yyd} and \eqref{e-gue170305bII}, it is easy to see 
\begin{equation}\label{e-gue170305fIII}
\abs{r_0(x,y)}\leq C\abs{(x,y)-(x_0,x_0)},
\end{equation}
for all $x_0\in Y\bigcap U$, where $C>0$ is a constant. We use $\norm{\cdot}$ to denote the standard $L^2$ norm on $X$ induced by the given volume form $dV_X$. We need 

\begin{lem}\label{l-gue170306s}
Let $p\in Y$. Let $x=(x_1,\ldots,x_{2n+1})$ be the local coordinates as in Proposition~\ref{prop:coordinates} defined 
in an open set $U$ of $p$, $U\subset D$.
Let
\[
\renewcommand{\arraystretch}{1.2}
\begin{array}{ll}
&H_k(x,y)=\int e^{ikA(x'',y'',t)}h(x,y,t,k)dt\ \ \mbox{on $U\times U$},\\
&h(x,y,t,k)\in S^{n-\frac{d}{2}}_{{\rm loc\,}}(1; U\times U\times I),\\
&\mbox{$h(x,y,t,k)\sim\sum^\infty_{j=0}k^{n-\frac{d}{2}-j}h_j( x,y,t)$ in $S^{n-\frac{d}{2}}_{{\rm loc\,}}(1; U\times U\times I)$},\\
&h(x,y,t,k)\in\mathcal{C}^\infty_c(U\times U\times I),\\
&h_j(x,y,t)\in\mathcal{C}^\infty_c(U\times U\times I),\ \ j=0,1,2,\ldots.
\end{array}
\]
Then, 
\begin{equation}\label{e-gue170306s}
\norm{H_ku}\leq \delta_k\norm{u},\ \ \forall u\in\mathcal{C}^\infty(X),\ \ \forall k\in\mathbb N,
\end{equation}
where $\delta_k$ is a sequence with $\lim_{k\To\infty}\delta_k=0$. 
\end{lem}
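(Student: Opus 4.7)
I will apply Schur's test, combining the Gaussian decay of $|H_k(x,y)|$ in the directions transverse to $Y$-diagonal furnished by the positivity estimate
\[
\mathrm{Im}\,A(x'',y'',t) \geq c\bigl(|\hat x''|^2 + |\hat y''|^2 + |\mathring x'' - \mathring y''|^2\bigr)
\]
of Theorem~\ref{t-gue170105I} with decay in $|x_{2n+1}-y_{2n+1}|$ obtained from integration by parts and complex stationary phase in the $t$-variable. The amplitude $h$ has symbol order one less than the full Fourier--Szeg\H{o} amplitude $S^{n+1-d/2}$, and that gap is exactly what produces the $o(1)$ bound on the operator norm.

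\textbf{Step 1 (pointwise kernel estimate).} I first establish that for every $N\in\mathbb N$ there exist constants $C_N,c>0$ independent of $k$ such that, uniformly for $(x,y)$ in the compact support of $h$,
\[
|H_k(x,y)| \leq C_N\, k^{n-d/2-1/2}\, e^{-ck(|\hat x''|^2 + |\hat y''|^2 + |\mathring x'' - \mathring y''|^2)}\,\bigl(1+k|x_{2n+1}-y_{2n+1}|\bigr)^{-N}.
\]
By \eqref{e-gue170227} we have $\partial_t A = (x_{2n+1}-y_{2n+1}) + \partial_t\widetilde{\hat A}(\mathring x'',\mathring y'',t)$, with $\partial_t A|_{x=y\in Y}=0$, and $\partial_t^2 A$ is nondegenerate on the critical manifold (the relevant Hessian is the $\tilde s$-block of the matrix computed in \eqref{e-gue240912yyd}). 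In the regime where a nearby real critical point of $t\mapsto A$ exists, the Melin--Sj\"ostrand complex stationary phase applied to the $t$-integration produces the factor $k^{-1/2}$ from the half-dimensional Hessian, together with the exponential decay of $|e^{ikA}|=e^{-k\,\mathrm{Im}\,A}$ at the critical value. In the complementary regime, $|\partial_t A|$ is bounded below by a positive multiple of $|x_{2n+1}-y_{2n+1}|$, and repeated integration by parts in $t$ using the operator $(ik\,\partial_t A)^{-1}\partial_t$ yields the factor $(1+k|x_{2n+1}-y_{2n+1}|)^{-N}$. The Gaussian decay in the remaining transverse directions is read off directly from Theorem~\ref{t-gue170105I}.

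\textbf{Step 2 (Schur's test).} Writing $y=(y',\hat y'',\tilde{\mathring y}'',y_{2n+1})$ in the coordinates of Proposition~\ref{prop:coordinates} (of respective dimensions $d$, $d$, $2n-2d$, and $1$), and using that $dV_X$ is a smooth positive density times Lebesgue measure in these coordinates, I integrate the pointwise bound factor by factor. The $y'$-integration contributes $O(1)$ by compactness of $\mathrm{supp}\,h$; the $\hat y''$-integration against a Gaussian of scale $k^{-1/2}$ in $d$ variables contributes $k^{-d/2}$; the $\tilde{\mathring y}''$-integration against a Gaussian of scale $k^{-1/2}$ in $2n-2d$ variables contributes $k^{-(n-d)}$; and the $y_{2n+1}$-integration against $(1+k|x_{2n+1}-y_{2n+1}|)^{-N}$ with $N\geq 2$ contributes $k^{-1}$. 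Multiplying these yields
\[
\sup_{x\in U}\int_U |H_k(x,y)|\,dV_X(y) \leq C\, k^{n-d/2-1/2}\cdot k^{-d/2}\cdot k^{-(n-d)}\cdot k^{-1} = C\,k^{-3/2},
\]
and the symmetric bound $\sup_y \int |H_k(x,y)|\,dV_X(x) \leq Ck^{-3/2}$ is obtained by the same argument. Schur's test then gives $\|H_k u\|\leq C\,k^{-3/2}\|u\|$ for all $u\in\mathcal{C}^\infty(X)$, so one may take $\delta_k := C\,k^{-3/2}$, which tends to $0$ as $k\to\infty$.

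\textbf{Main obstacle.} The technical core is the uniform pointwise estimate in Step~1. Since $A$ is only an almost-analytic complex phase with $\mathrm{Im}\,A\geq 0$ and no real critical point in general, one must glue the Melin--Sj\"ostrand stationary-phase contribution in a neighborhood of the critical submanifold to the repeated integration-by-parts contribution away from it, while keeping all constants uniform over the compact support of $h$ and all large $k$. The nondegeneracy of $\partial_t^2 A$ on the critical manifold, inherited from \eqref{e-gue240912yyd}, is what makes the stationary-phase half of this argument go through. Once that estimate is secured, the Schur integration is routine.
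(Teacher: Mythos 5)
Your overall strategy (a uniform pointwise kernel bound plus Schur's test) is genuinely different from the paper's argument, which never estimates $H_k(x,y)$ pointwise at the claimed precision: the paper instead uses the interpolation inequality $\norm{H_ku}\leq\norm{(H^*_kH_k)^{2^N}u}^{1/2^{N+1}}\norm{u}^{1-1/2^{N+1}}$ together with the composition calculus (Theorem~\ref{t-gue170301w}), under which each composition drops the symbol order by $n+1-\frac{d}{2}$, so that $(H_k^*H_k)^{2^N}$ has kernel $O(k^{n+1-2^{N+1}-d/2})$ and one concludes by taking $N$ large. Your route could in principle work, but your Step~1 — which you yourself identify as the technical core — is false as stated, and its justification rests on a misreading.

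Concretely: by \eqref{e-gue240909ycd} the phase satisfies $A(x,y,t)=0$ for all $t\in I$ whenever $x=y\in Y$, so on ${\rm diag}\,(Y\cap U)$ the phase vanishes identically in $t$; there is no oscillation and no stationary-phase gain, and indeed $H_k(x,x)=\int h(x,x,t,k)\,dt$ is genuinely of size $k^{n-d/2}$ for $x\in Y$, contradicting your claimed bound $C_Nk^{n-d/2-1/2}$. Relatedly, $\partial_t^2A$ is \emph{not} nondegenerate on the critical manifold — all $t$-derivatives of $A$ vanish on ${\rm diag}\,(Y\cap U)$ — and the matrix in \eqref{e-gue240912yyd} is the Hessian of the composed phase in the intermediate variables $(\Td s,\Td{\Td u}'')$ arising in $P^G_{k,\tau^2}\circ P^G_{k,\tau^2}$, not $\partial_t^2A$; citing it here is a category error. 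Finally, your integration-by-parts regime is too optimistic: each $\partial_t$ falling on $e^{ik\hat A}$ produces a factor $k\,\partial_t\hat A$, and since $\abs{\partial_t\hat A}\lesssim\sqrt{{\rm Im\,}A}$ near ${\rm diag}\,(Y)$ this costs $\sqrt{k}$ per step after absorbing it into the Gaussian, so the attainable decay is in $\sqrt{k}\,\abs{x_{2n+1}-y_{2n+1}}$, not $k\,\abs{x_{2n+1}-y_{2n+1}}$. The argument is salvageable: with the corrected bound $\abs{H_k(x,y)}\lesssim k^{n-d/2}e^{-ck(\abs{\hat x''}^2+\abs{\hat y''}^2+\abs{\mathring x''-\mathring y''}^2)}\bigl(1+\sqrt{k}\abs{x_{2n+1}-y_{2n+1}}\bigr)^{-N}$, your Schur computation gives $\delta_k\lesssim k^{-1/2}\to0$, which suffices. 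But as written, the key estimate is wrong and its proof sketch does not establish even the corrected version, so the proposal has a genuine gap.
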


\begin{proof}
Fix $N\in\mathbb N$. It is not difficult to see that 
\begin{equation}\label{e-gue170306Ix}
\norm{H_ku}\leq\norm{(H^*_kH_k)^{2^N}u}^{\frac{1}{2^{N+1}}}\norm{u}^{1-\frac{1}{2^{N+1}}},\ \ \forall u\in\mathcal{C}^\infty(X),
\end{equation}
where $H^*_k$ denotes the adjoint of $H_k$ with respect to the given volume form $dV_X$. From Theorem~\ref{t-gue170301w}, we can repeat the proof of Theorem~\ref{t-gue170304ry} with minor change and deduce that 
\[
\renewcommand{\arraystretch}{1.2}
\begin{array}{ll}
&(H^*_kH_k)^{2^N}(x,y)=e^{ikA(x'',y'',t)}p (x,y,t,k)+O(k^{-\infty})\ \ \mbox{on $U\times U$},\\
&p(x,y,t,k)\in S^{n+1-2^{N+1}-\frac{d}{2}}_{{\rm loc\,}}(1; U\times U\times I),\\
&p(x,y,t,k)\in\mathcal{C}^\infty_0(U\times U\times I).
\end{array}
\]
Hence, 
\begin{equation}\label{e-gue170306xIII}
\abs{(H^*_kH_k)^{2^N}(x,y)}\leq \hat Ck^{n+1-2^{N+1}-\frac{d}{2}},\ \ \forall (x,y)\in U\times U,
\end{equation}
where $\hat C>0$ is a constant independent of $k$. Take $N$ large enough so that $n+1-2^{N+1}-\frac{d}{2}<0$. From \eqref{e-gue170306Ix} and \eqref{e-gue170306xIII}, 
we get \eqref{e-gue170306s}. 
\end{proof}

We also need 

\begin{lem}\label{l-gue170306}
Let $p\in Y$. Let $x=(x_1,\ldots,x_{2n+1})$ be the local coordinates as in Proposition~\ref{prop:coordinates} defined 
in an open set $U$ of $p$, $U\subset D$.
Let 
\[
\renewcommand{\arraystretch}{1.2}
\begin{array}{ll}
&B_k(x,y)=\int e^{ikA(x'',y'',t)}g(x,y,t,k)dt\ \ \mbox{on $U\times U$},\\
&g(x,y,t,k)\in S^{n+1-\frac{d}{2}}_{{\rm loc\,}}(1; U\times U\times I),\\
&\mbox{$g(x,y,t,k)\sim\sum^\infty_{j=0}k^{n+1-\frac{d}{2}-j}g_j(x,y,t)$ in $S^{n+1-\frac{d}{2}}_{{\rm loc\,}}(1; U\times U\times I)$},\\
&g_j(x,y,t)\in\mathcal{C}^\infty_c(U\times U\times I),\ \  j=0,1,2,\ldots,\\
&g(x,y,t,k)\in\mathcal{C}^\infty_c(U\times U\times I). 
\end{array}
\]
Suppose that 
\[
\abs{g_0(x,y)}\leq C\abs{(x,y)-(x_0,x_0)},
\]
for all $x_0\in Y\bigcap U$, where $C>0$ is a constant. Then, 
\begin{equation}\label{e-gue170306I}
\norm{B_ku}\leq \varepsilon_k\norm{u},\ \ \forall u\in\mathcal{C}^\infty(X),\ \ \forall k\in\mathbb N,
\end{equation}
where $\varepsilon_k$ is a sequence with $\lim_{k\To\infty}\varepsilon_k=0$. 
\end{lem}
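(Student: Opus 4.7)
The plan is to reduce the estimate to Lemma~\ref{l-gue170306s} by exploiting the first-order vanishing of the leading symbol $g_0$ on $\mathrm{diag}(Y\cap U)$. First I would split off the subleading terms: write
\[
g(x,y,t,k) = k^{n+1-\frac{d}{2}} g_0(x,y,t) + \tilde g(x,y,t,k)
\]
with $\tilde g \in S^{n-\frac{d}{2}}_{\mathrm{loc}}(1;U\times U\times I)$ compactly supported in $t$. The contribution of $\tilde g$ is immediately controlled by Lemma~\ref{l-gue170306s}, so it suffices to prove the estimate for the leading-order operator $B^{0}_{k}$ whose kernel is $k^{n+1-\frac{d}{2}}\int e^{ikA} g_0\,dt$.

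For $B^{0}_k$ I would run a $TT^*$ iteration. By Cauchy--Schwarz, $\|B^{0}_k\|_{\mathrm{op}}^{4} \leq \|((B^{0}_k)^* B^{0}_k)^2\|_{\mathrm{op}}$, so it is enough to bound the fourfold composition. Applying Theorem~\ref{t-gue170301w} iteratively, using the symmetric form of the hypothesis on $g_0$ for $(B^{0}_k)^*$, the kernel of $((B^{0}_k)^* B^{0}_k)^2$ is of the form $\int e^{ikA(x'',y'',t)}\,c(x,y,t,k)\,dt + O(k^{-\infty})$ with $c \in S^{n+1-\frac{d}{2}}_{\mathrm{loc}}(1; U\times U\times I)$ whose leading symbol $c_0$ satisfies, by the quantitative vanishing statement in the second half of Theorem~\ref{t-gue170301w}, the estimate $|c_0(x,y,t)| \leq C |(x,y)-(x_0,x_0)|^{4}$ for every $(x_0,x_0) \in \mathrm{diag}(Y\cap U)$.

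Next I would convert this fourth-order vanishing into decay in $k$ using Theorem~\ref{t-gue170105I}: since $\mathrm{Im}\,A \gtrsim |\hat x''|^2 + |\hat y''|^2 + |\mathring x''-\mathring y''|^2$, the inequality $s^{m}e^{-ks} \leq C_m k^{-m}$ gives the pointwise bound
\[
\bigl| c_0(x,y,t) \, e^{ikA(x'',y'',t)} \bigr| \leq C\, k^{-2}\, e^{-\frac{1}{2}k\,\mathrm{Im}\,A(x'',y'',t)},
\]
up to polynomial factors from the $x'-y'$ and $x_{2n+1}-y_{2n+1}$ directions. Combined with the standard Gaussian estimate $\int e^{-\frac{1}{2}k\,\mathrm{Im}\,A}\,dV(y) \leq C\, k^{-(n-\frac{d}{2})}$ in the transverse $\hat y''$ and $\tilde{\mathring y}''$ directions, Schur's test yields
\[
\bigl\|((B^{0}_k)^* B^{0}_k)^2\bigr\|_{\mathrm{op}} \leq C\,k^{n+1-\frac{d}{2}}\cdot k^{-2}\cdot k^{-(n-\frac{d}{2})} = C\,k^{-1},
\]
so $\|B^{0}_k\|_{\mathrm{op}} \leq C\,k^{-1/4}$, which is the required decay.

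The main obstacle is the bookkeeping in the $y'$ and $y_{2n+1}$ directions, where the phase $A$ offers no Gaussian decay ($A$ is independent of $x'-y'$ and depends only linearly on $x_{2n+1}-y_{2n+1}$). The $y'$ direction is handled by the compact support of the amplitudes, but the Reeb direction forces repeated integration by parts in $t$ using $\partial_t A = x_{2n+1}-y_{2n+1} + O(|\mathring x''|^{2}+|\mathring y''|^{2})$; one must verify that these quadratic corrections do not spoil the oscillation and that the resulting error terms can be absorbed uniformly in $k$ together with the $O(k^{-\infty})$ errors arising from Theorem~\ref{t-gue170301w}.
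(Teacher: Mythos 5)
Your overall strategy is the same as the paper's (split off the subleading symbol and absorb it with Lemma~\ref{l-gue170306s}, compose via Theorem~\ref{t-gue170301w} so that the leading symbol vanishes to high order on ${\rm diag\,}\bigl((Y\cap U)\times(Y\cap U)\bigr)$, convert that vanishing into negative powers of $k$ through the lower bound on ${\rm Im\,}A$ from Theorem~\ref{t-gue170105I}, and finish with a $TT^*$/interpolation inequality), but the step you yourself defer to a later "verification" is precisely the step that carries the proof, and you have not supplied it. The hypothesis controls $g_0$ only through the full distance $\abs{(x,y)-(x_0,x_0)}$, which contains $\abs{x_{2n+1}-y_{2n+1}}$ and $\abs{x'-y'}$, directions in which ${\rm Im\,}A$ gives no decay; hence fourth-order vanishing does not yield the bound $\abs{c_0\,e^{ikA}}\leq Ck^{-2}e^{-\frac{1}{2}k{\rm Im\,}A}$ by the inequality $s^me^{-ks}\leq C_mk^{-m}$ alone, and the "polynomial factors" you set aside are exactly the obstruction. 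The paper removes the Reeb-direction dependence \emph{before} estimating: since $\pr_t\pr_{x_{2n+1}}A\neq0$ on $Y\times U$, Malgrange's preparation theorem gives the exact factorization $\pr_tA=\alpha(x,y,t)\,(x_{2n+1}-\beta(x,y,t))$; Taylor expanding the leading amplitude at $x_{2n+1}=\beta(x,y,t)$ and integrating by parts in $t$, one may take the leading amplitude independent of $x_{2n+1}$, after which the vanishing estimate involves only $\abs{\hat x''}+\abs{\hat y''}+\abs{\Td{\mathring{x}}''-\Td{\mathring{y}}''}$, i.e. exactly the quantities dominated by $\sqrt{{\rm Im\,}A}$. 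Your heuristic $\pr_tA=x_{2n+1}-y_{2n+1}+O(\abs{\mathring{x}''}^2+\abs{\mathring{y}''}^2)$ does not substitute for this exact preparation; and for the $x'$, $y'$ directions, "compact support" only gives boundedness, not smallness — what is actually used is that the relevant amplitudes depend on $(x'',y'')$ only.

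There is a second, more technical flaw: your Schur bound for $((B^0_k)^*B^0_k)^2$ accounts only for the leading symbol $c_0$. Composing four copies of the purely leading operator by stationary phase produces a full symbol $c\sim\sum_{j\geq0}k^{n+1-\frac{d}{2}-j}c_j$, and the terms with $j\geq1$ are of size $k^{n-\frac{d}{2}}$ and have no vanishing on ${\rm diag\,}(Y\cap U)$; for them the Gaussian integral produces exactly $k^{-(n-\frac{d}{2})}$, so Schur's test gives only $O(1)$, not $O(k^{-1})$. Hence the asserted bound $\norm{((B^0_k)^*B^0_k)^2}\leq Ck^{-1}$, and the rate $k^{-1/4}$, do not follow as written. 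The repair is what the paper does: after composing, split the amplitude once more, estimate the leading piece pointwise (your fourth power suffices for that piece once the preparation step is in place; the paper takes an arbitrary power $2^N$ and a cruder sup-norm bound), and control the remainder by Lemma~\ref{l-gue170306s}; this yields some $\varepsilon_k\to0$ but no explicit rate. With these two repairs your argument becomes essentially the paper's proof.
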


\begin{proof}
Fix $N\in\mathbb N$. It is not difficult to see that 
\begin{equation}\label{e-gue170306Id}
\norm{B_ku}\leq\norm{(B^*_kB_k)^{2^N}u}^{\frac{1}{2^{N+1}}}\norm{u}^{1-\frac{1}{2^{N+1}}},\ \ \forall u\in\mathcal{C}^\infty(X),
\end{equation}
where $B^*_k$ denotes the adjoint of $B_k$ with respect to the volume form $dV_X$. From Theorem~\ref{t-gue170301w}, we can repeat the proof of Theorem~\ref{t-gue170304ry} with minor change and deduce that 
\[
\renewcommand{\arraystretch}{1.2}
\begin{array}{ll}
&(B^*_kB_k)^{2^N}(x,y)=\int e^{ikA(x'',y'',t)}\hat g(x,y,t,k)dt+O(k^{-\infty})\ \ \mbox{on $U\times U$},\\
&\hat g(x,y,t,k)\in S^{n+1-\frac{d}{2}}_{{\rm loc\,}}(1; U\times U\times I),\\
&\mbox{$\hat g(x,y,t,k)\sim\sum^\infty_{j=0}k^{n+1-\frac{d}{2}-j}\hat g_j(x,y,t)$ in $S^{n+1-\frac{d}{2}}_{{\rm loc\,}}(1; U\times U\times I)$},\\
&\hat g_j(x,y,t)\in\mathcal{C}^\infty_c(U\times U\times I),\ \ j=0,1,2,\ldots, \\
&\hat g(x,y,t,k)\in\mathcal{C}^\infty_c(U\times U\times I),
\end{array}
\]
and 
\begin{equation}\label{e-gue170306III}
\abs{\hat g_0(x,y,t)}\leq C\abs{(x,y)-(x_0,x_0)}^{2^{N+1}},
\end{equation}
for all $x_0\in Y\bigcap U$, where $C>0$ is a constant. 

Let 
\[\begin{split}
&(B^*_kB_k)^{2^N}_0(x,y)=\int e^{ik\Psi(x'',y'',t)}\hat g_0(x,y,t,k)dt,\\&(B^*_kB_k)^{2^N}_1(x,y)=\int e^{ik\Psi(x'',y'',t)}h(x,y,t,k)dt,
\end{split}\]
where $h(x,y,t,k)=\hat g(x,y,t,k)-\hat g_0(x,y,t,k)$. It is clear that 
$h(x,y,t,k)\in S^{n-\frac{d}{2}}_{{\rm loc\,}}(1; U\times U\times I).$
From Lemma~\ref{l-gue170306s}, we see that 
\begin{equation}\label{e-gue170306e}
\norm{(B^*_kB_k)^{2^N}_1u}\leq \delta_k\norm{u},\ \ \forall u\in\mathcal{C}^\infty(X),\ \ \forall k\in\mathbb N,
\end{equation}
where $\delta_k$ is a sequence with $\lim_{k\To\infty}\delta_k=0$. 

Since $\pr_t\pr_{x_{2n+1}}A|_{(x,y)\in Y\times U}\neq0$, by Malgrange preparation theorem, we have 
\[\pr_tA(x,y,t)=\alpha(x,y,t)(x_{2n+1}-\beta(x,y,t))\]
in $V\times V\times I$, where $V$ is a small open set of $p$, $\alpha, \beta\in\mathcal{C}^\infty(V\times V\times I)$.
We can consider Taylor expansion of $\Td g_0(x,y,t)$ at $x_{2n+1}=\beta(x,y,t)$ and by using integration by parts with respect to $t$, we may take $g_0$ so that 
\begin{equation}\label{e-gue241007ycd}
\mbox{$g_0$ is independent of $x_{2n+1}$}.
\end{equation}
From \eqref{e-gue170306III} and \eqref{e-gue241007ycd}, we see that 
\begin{equation}\label{e-gue170306r}
\abs{\hat g_0(x,y,t)}\leq C_1\Bigr(\abs{\hat x''}+\abs{\hat y''}+\abs{\Td{\mathring{x}}''-\Td{\mathring{y}}''}\Bigr)^{2^{N+1}},
\end{equation}
where $C_1>0$ is a constant. From \eqref{e-gue170106m}, we see that 
\begin{equation}\label{e-gue170306rI}
\abs{{\rm Im\,}A(x,y,t)}\geq c\Bigr(\abs{\hat x''}^2+\abs{\hat y''}^2+\abs{\Td{\mathring{x}}''-\Td{\mathring{y}}''}^2\Bigr),
\end{equation}
where $c>0$ is a constant. From \eqref{e-gue170306r} and \eqref{e-gue170306rI}, we conclude that 
\begin{equation}\label{e-gue170306rII}
\abs{(B^*_kB_k)^{2^N}_0(x,y)}\leq \hat Ck^{-2^N+n-\frac{d}{2}+1},\ \ \forall (x,y)\in U\times U,
\end{equation}
where $\hat C>0$ is a constant independent of $k$. From \eqref{e-gue170306rII}, we see that if $N$ large enough, then
\begin{equation}\label{e-gue170306ryIII}
\norm{(B^*_kB_k)^{2^N}_0u}\leq\hat\delta_k\norm{u},\ \ \forall u\in\mathcal{C}^\infty(X),\ \ \forall k\in\mathbb N,
\end{equation}
where $\hat\delta_k$ is a sequence with $\lim_{k\To\infty}\hat\delta_k=0$. 

From \eqref{e-gue170306Id}, \eqref{e-gue170306e} and \eqref{e-gue170306ryIII}, we get \eqref{e-gue170306I}. 
\end{proof}

From \eqref{e-gue170305f} and Lemma~\ref{l-gue170306}, we get 

\begin{thm}\label{t-gue241015yyd}
With the notations used above, we have
\begin{equation}\label{e-gue241015yydI}
\norm{R_ku}_k\leq \delta_k\norm{u}_k,\ \ \forall u\in\mathcal{C}^\infty(X,L^k),\ \ \forall k\in\mathbb N,
\end{equation}
where $R_k$ is as in \eqref{e-gue241015yyd} and $\delta_k$ is a sequence with $\lim_{k\To\infty}\delta_k=0$. 

In particular, for $k\gg1$, 
\begin{equation}\label{e-gue241015yydII}
\mbox{$I+R_k:
\mathcal{C}^\infty(X,L^k)\To\mathcal{C}^\infty(X,L^k)$ is injectiuve}.
\end{equation}
\end{thm}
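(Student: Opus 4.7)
\textbf{Proof plan for Theorem~\ref{t-gue241015yyd}.} The strategy is to combine the off-$Y$ rapid decay of $R_k$ from Theorem~\ref{t-gue170305a} with the $o(1)$ operator norm estimate provided by Lemma~\ref{l-gue170306}, after verifying that the leading symbol of the local representation \eqref{e-gue170305fII} vanishes on the diagonal over $Y$. The injectivity assertion then follows from a routine contraction argument.

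\emph{Step 1: matching of leading symbols.} By \eqref{e-gue170305fII}, $R_k=F_k-P^G_{k,\tau^8}$ admits, near any $p\in Y$, the Fourier integral representation $R_k(x,y)\equiv\int e^{ikA(x'',y'',t)}r(x,y,t,k)\,dt$ with the same phase $A$ as the local expansions of $F_k$ (Theorem~\ref{t-gue170305a}) and of $P^G_{k,\tau^8}$ (obtained by applying Theorems~\ref{thm:Gszego}--\ref{thm:2} with $\tau^2$ replaced by $\tau^8$). Comparing \eqref{e-gue170305bII} with \eqref{e-gue241007yyd} (the latter evaluated with $\tau^8$ in place of $\tau^2$), one sees that both leading symbols, restricted to the diagonal at a point of $Y$, equal
\[
2^{-n-1+d}\,\frac{1}{V_{\mathrm{eff}}(\Td x'')}\,|\det R_{\Td x''}(t)|^{-1/2}\,\pi^{-n-1+d/2}\,|\det(R^L_{\Td x''}-2t\mathcal{L}_{\Td x''})|\,\tau^8(t),
\]
so $r_0(x_0,x_0,t)=0$ for every $x_0\in Y\cap U$. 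A first order Taylor expansion in $(x,y)$ at $(x_0,x_0)$ then yields $|r_0(x,y,t)|\leq C|(x,y)-(x_0,x_0)|$, which is exactly the vanishing hypothesis required by Lemma~\ref{l-gue170306}.

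\emph{Step 2: global $L^2$ bound.} Cover a neighborhood of $Y$ by finitely many coordinate patches $U_1,\ldots,U_N$ as in Proposition~\ref{prop:coordinates}, pick a subordinate partition of unity $\{\chi_j\}_{j=1}^N$ on that neighborhood, and let $\chi_0:=1-\sum_{j=1}^N\chi_j$, supported away from $Y$. Decompose $R_k=R_k\chi_0+\sum_{j=1}^N R_k\chi_j$. From \eqref{e-gue170305f} we have $R_k\chi_0=O(k^{-\infty})$ on $X$, contributing an $O(k^{-\infty})\|u\|_k$ term. On each $U_j$, the localized kernel $R_k\chi_j$ has the form covered by Lemma~\ref{l-gue170306}, and multiplication by the bounded cutoff $\chi_j$ preserves the diagonal vanishing estimate for $r_0$; hence $\|R_k\chi_j u\|_k\leq\varepsilon_{k,j}\|u\|_k$ with $\varepsilon_{k,j}\to 0$. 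Summing over $j$ produces the sequence $\delta_k\to 0$ with
\[
\|R_k u\|_k\leq \delta_k\|u\|_k,\quad \forall u\in\mathcal{C}^\infty(X,L^k),\ \forall k\in\mathbb N,
\]
which is \eqref{e-gue241015yydI}.

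\emph{Step 3: injectivity.} Fix $k_0$ so that $\delta_k<1$ for every $k\geq k_0$. If $u\in\mathcal{C}^\infty(X,L^k)$ satisfies $(I+R_k)u=0$, then $u=-R_ku$ and therefore $\|u\|_k=\|R_ku\|_k\leq \delta_k\|u\|_k<\|u\|_k$ unless $u=0$. Hence $u\equiv 0$ and $I+R_k$ is injective, proving \eqref{e-gue241015yydII}.

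\emph{Main obstacle.} The substantive analytic input is Lemma~\ref{l-gue170306}, which has already been established, so the remaining work here is bookkeeping. The only delicate point in the argument above is ensuring that cutting off by $\chi_j$ and summing does not destroy the vanishing of $r_0$ on the diagonal over $Y$; since $\chi_j$ and its derivatives are bounded, this is immediate. The rest is a standard Neumann-series style contraction argument.
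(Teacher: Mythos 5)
Your argument is correct and follows essentially the same route as the paper: the paper also obtains \eqref{e-gue241015yydI} by combining the off-$Y$ rapid decay \eqref{e-gue170305f} with Lemma~\ref{l-gue170306}, applied to the local representation \eqref{e-gue170305fII} whose leading symbol satisfies \eqref{e-gue170305fIII} precisely because \eqref{e-gue241007yyd} (with $\tau^8$) and \eqref{e-gue170305bII} agree on the diagonal over $Y$, and the injectivity claim \eqref{e-gue241015yydII} is the same elementary contraction argument. Your partition-of-unity globalization just makes explicit what the paper leaves implicit.
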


\begin{proof}[Proof of Theorem~\ref{thm:quantandred}]
Fix $\lambda\in{\rm Spec\,}(-iT)$, $\tau\equiv1$ near $\lambda$. Let $u\in H^0_{b,\lambda}(X,L^k)^G$. 
If $\sigma_ku=0$. Then, $\sigma^*_k\sigma_ku=(P^G_{k,\tau^8}+R_k)u=(I+R_k)u=0$. From \eqref{e-gue241015yydII}, we get $u=0$ if $k\gg1$. Note that 
$\sigma_k$ maps the space $\mathcal{H}^0_{b,\lambda}(X,L^k)^G$ into $\mathcal{H}^0_{b,\lambda}(X_G,L^k_G)$. Thus, 
 $\sigma_k: \mathcal{H}^0_{b,\lambda}(X,L^k)^G\To\mathcal{H}^0_{b,\lambda}(X_G,L^k_G)$ is injective if $k\gg1$. 

 From Theorem~\ref{t-gue170305aI}, we have $\sigma_k\sigma^*_k=C_0(P_{k,X_G,\tau^8}+Q_k)$, where $Q_k$ is a semi-classical complex Fourier integral operator of the same type and order of $Q_k$ vanishes at the diagonal, where $C_0>0$ is a constant. We can repeat the proof of Theorem~\ref{t-gue241015yyd} with minor change and deduce that $I+Q_k: \mathcal{C}^\infty(X_G,L^k_G)\To\mathcal{C}^\infty(X_G,L^k_G)$ is injective, if $k\gg1$. Note that $({\rm Im\,}\sigma_k)^\perp\cap \mathcal{H}^0_{b,\lambda}(X_G,L^k)^G\subset {\rm Ker\,}\sigma^*_k\cap \mathcal{H}^0_{b,\lambda}(X_G,L^k)^G$. We conculde that 
$\sigma_k: \mathcal{H}^0_{b,\lambda}(X,L^k)^G\To\mathcal{H}^0_{b,\lambda}(X_G,L^k_G)$ is surjective if $k\gg1$. The theorem follows. 
\end{proof}

\bigskip
\textbf{Acknowledgements:} Andrea Galasso would like to express his thanks to the Mathematics Institute of Universität zu K\"oln for their hospitality throughout his scholarship: INdAM (Istituto Nazionale di Alta Matematica) foreign scholarship. Chin-Yu Hsiao was supported by Taiwan Ministry of Science and Technology projects 109-2923-M-001-010-MY4, 113-2115-M-002-011-MY3.

\end{document}